\def\BState{\State\hskip-\ALG@thistlm}
\newcommand{\R}{\mathbb{R}}
\newcommand{\C}{\mathbb{C}}
\newcommand{\Class}[1]{\mathcal{C}^{#1}}
\newcommand{\Z}{\mathbb{Z}}
\newcommand{\Tau}{T}
\newcommand{\dd}{\partial}
\DeclareMathOperator{\distance}{d}
\newcommand\dist[3]{\distance\left(#1,#2\right)_{#3}}
\newcommand{\larrow}[1]{\overset{\text{\tiny$\leftarrow$}}{#1}}
\newcommand{\darrow}[1]{\overset{\text{\tiny$\leftrightarrow$}}{#1}}
\newcommand{\ehat}[1]{\expandafter\hat#1}
\newcommand{\coorientation}[1]{\mathcal{E}ul\mathcal{C}o(#1)}
\newcommand{\cor}{c}
\theoremstyle{plain}
\newtheorem{theorem}{Theorem}
\newtheorem{remark}[theorem]{Remark}
\newtheorem{example}[theorem]{Example}
\newtheorem{examples}[theorem]{Examples}
\newtheorem{remarks}[theorem]{Remarks}
\newtheorem{lemma}[theorem]{Lemma}
\newtheorem{proposition}[theorem]{Proposition}
\newtheorem{introtheorem}{Theorem}
\newtheorem{introcorollary}[introtheorem]{Corollary}
\theoremstyle{definition}
\newtheorem{definition}[theorem]{Definition}
\newtheorem*{introresult}{Main results}
\newtheorem*{mainconventions}{Main conventions}
\DeclareMathOperator{\Id}{Id}
\DeclareMathOperator{\inte}{int}
\DeclareMathOperator{\supp}{supp}
\DeclareMathOperator{\e}{e}
\title{First-return maps of Birkhoff sections of the geodesic flow}
\author{Th\'eo MARTY}
\begin{document}

\maketitle

\begin{abstract}
This paper compares different pseudo-Anosov maps coming from different Birkhoff sections of a given flow. More precisely, given a hyperbolic surface and a collection of periodic geodesics on it, we study those Birkhoff sections for the geodesic flow on the unit bundle to the surface bounded by the collection. We show that there is a canonical identification of all those Birkhoff sections, and that the first-return maps induced by the flow can all be expressed as a composition of negative Dehn twists along a family of explicit curves~: only the order depends on the choice of a particular Birkhoff section. 
\end{abstract}

\tableofcontents

\section*{Introduction}
\addcontentsline{toc}{section}{Introduction}

The unit sphere of the Thurston norm of a compact~$3$-manifold~$M$ is a polyhedron in~$H_2(M,\R)$. To a fibration~$M\to S^1$ by compact surfaces corresponds a rational point in the sphere given by homology ray containing the fibers. By a theorem of Thurston, a flow in~$M$ corresponds to a  so-called  fibered face in the unit sphere, given by all fibrations whose fibers are global sections for the flow. The flow also induces first-return maps on these global sections. The goal of this paper is to understand how these first-return maps are all connected, in a specific case.

When the flow is of pseudo-Anosov type, the first-return on a section~$S$ is of pseudo-Anosov type. In particular it has a dilatation factor~$K>1$. Fried studied~\cite{Fried1982} the function~$\chi(S)\ln(K)$, which is convex and tends to infinity on the boundary of the fibered face. McMullen defined~\cite{McMullen00} the Teichmüller polynomial in~$\Z[H^1(M,\Z)]$, whose specialization at an integral point has~$K$ as greater root. This paper goes in the same direction by giving, for one explicit family of fibered faces, a computation and a comparison of the first-return maps, as products of Dehn twists.  

We are interested in the geodesic flow of a hyperbolic surface, which is an Anosov flow. Once one removes finitely periodic orbits, we obtain a pseudo-Anosov flow on a~$3$-manifold with toric boundary. The global sections for such flows come from Birkhoff sections of the original flow. Under a certain symmetry assumption, the fibered faces for these are rather well-understood. 

\begin{introresult}
Given a hyperbolic surface~$S$ and a symmetric collection~$\darrow\Gamma$ of periodic orbits of the geodesic flow on the unit tangent bundle~$T^1S$, there is a common combinatorial model~$\Sigma_\Gamma$ for all Birkhoff sections with boundary~$-\darrow{\Gamma}$, and a finite collection~$\gamma_1, \cdots, \gamma_n\subset \Sigma_\Gamma$ of simple closed curves on~$\Sigma_\Gamma$ such that the first-return map along the geodesic flow is of the form~$\tau_{\gamma_{\sigma(1)}}^{-1}\circ\dots\circ\tau_{\gamma_{\sigma(n)}}^{-1}$ for some permutation~$\sigma$ of~$\{1, \cdots, n\}$. The permutation~$\sigma$ depends explicitly on the point in the fibered face. Here~$\tau_\gamma^{-1}$ denotes the negative Dehn twist along~$\gamma$. 
\end{introresult}

These results will be restated below in course of the introduction as Theorems~\ref{introtheoremA},~\ref{introtheoremB},~\ref{introtheoremC} and Corollary \ref{introcorollaryD}. The product of negative Dehn twists gives a way to explicitly compare first-return maps for different integer points of the same fibered face, by only changing the order of the Dehn twists.

These results are reminiscent of A'Campo's divide construction~\cite{A'Campo98}, and of Ishikawa's generalization \cite{Ishikawa2004}. They decompose a monodromy as an explicit product of three Dehn multi-twists.  A'Campo's result was also recently generalized by Dehornoy and Liechti~\cite{dehornoy2019divide} who expressed the monodromy for divide links in the unit tangent bundle of arbitrary surfaces as products of two antitwists.  Our results deal more generally with all integral points in the fibered face, instead of just the center.

\paragraph{Birkhoff sections.} Let~$S$ be a hyperbolic closed surface with a fixed hyperbolic metric on~$S$, and let~$\phi$ be the geodesic flow on~$T^1S$. We will study some properties of the flow~$\phi$. For the rest of the paper,~$S$ and~$\phi$ will denote this hyperbolic surface and its geodesic flow. We are interested in finding Birkhoff sections, which are compact embedded surfaces~$\Sigma\subset T^1S$ such that:
\begin{itemize}
\item the interior of~$\Sigma$ is transverse to~$\phi$,
\item there exists~$t>0$ such that~$\phi_{[0,t]}(\Sigma)=T^1S$ (every orbit reaches~$\Sigma$ after a bounded time),
\item~$\partial \Sigma$ is a finite union of closed orbits of~$\phi$.
\end{itemize}

We call~$\Sigma$ a \textbf{transverse surface} if only the first and third points are satisfied. For a Birkhoff section~$\Sigma$, we denote by~$r_\Sigma:\inte(\Sigma)\to\inte(\Sigma)$ the induced first-return map.

\paragraph{Birkhoff sections with symmetric boundary.} Let~$\Gamma\subset S$ be a closed geodesic multi-curve composed by~$n$ curves. For the rest of the article, we suppose that~$\Gamma$ is \textbf{filling}, that is,~$S\setminus\Gamma$ is a union of discs. We also suppose that~$\Gamma$ is in generic position, that means with only degree~$4$ intersections, as in Figure \ref{example1}. This multi-curve lifts in~$T^1S$ into a multi-curve~$\darrow\Gamma$ of~$2n$ closed orbits of~$\phi$, where each curve of~$\Gamma$ is lifted with both orientations. We fix on~$\darrow\Gamma$ the orientation given by the geodesic flow. We will study the Birkhoff sections~$\Sigma$ with~$\partial\Sigma=\darrow{\Gamma}$, and whose multiplicities along their boundaries are~$(-1,\cdots,-1)$. It means that the usual orientation on~$T^1\Sigma$ and the coorientation of~$\Sigma$ by the flow induce an orientation on~$\Sigma$, which induces on~$\partial\Sigma$ the orientation opposite to the flow. Such a Birkhoff section is said to be bounded by~$-\darrow\Gamma$, and to have \textbf{symmetric boundary}.

\begin{figure}[ht]
\centering
\includegraphics[scale=0.8]{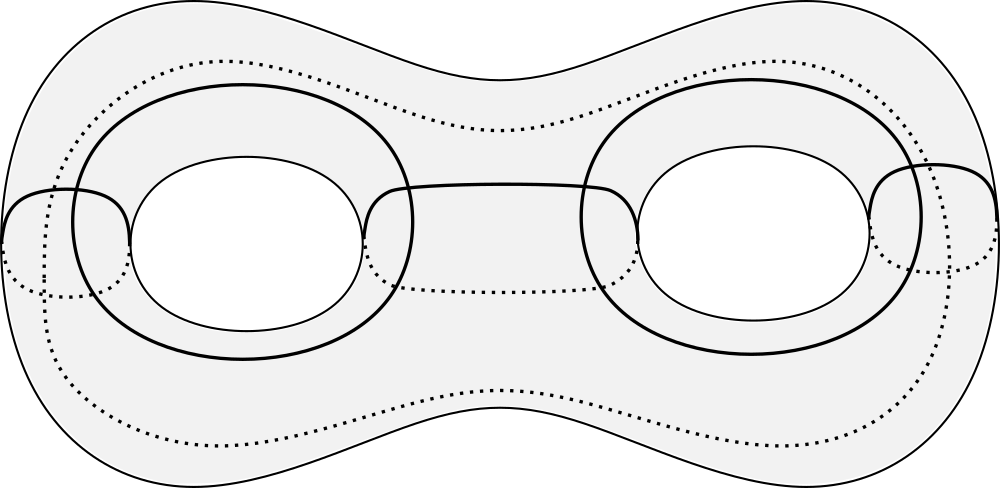}
\caption{Filling geodesic multi-curve on a hyperbolic surface.}
\label{example1}
\end{figure}

In Section \ref{section:construction}, we construct explicitly a surface~$\Sigma$ transverse to the flow, that is represented in Figure \ref{fig:introlift}. It relies on the choice of a Eulerian coorientation~$\eta$ of~$\Gamma$. Elementary properties and diffeomorphisms will be expressed using the combinatorics of~$\eta$, for example~$\Sigma_\eta$ is a Birkhoff section if and only if there is no oriented cycle in the dual graph~$(\Gamma^\star,\eta)$.

According to the classification of Birkhoff sections with symmetric boundaries of~\cite{CossariniDehornoy}, every Birkhoff section bounded by~$-\darrow\Gamma$ is isotopic to one such~$\Sigma_\eta$.

\begin{figure}[ht]
\centering
\includegraphics[scale=0.25]{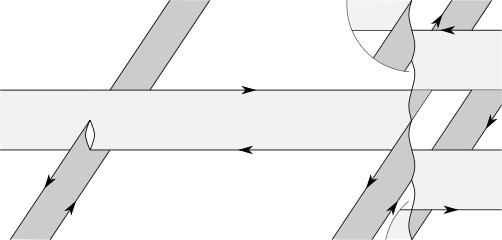}
\caption{Local picture of the surface~$\Sigma_\eta\subset T^1S$ locally identified with~$\R^2\times S^1$.} \label{fig:introlift}
\end{figure}

The surface~$\Sigma_\eta$ stays mainly in some specific fibers of~$\pi:T^1S\to S$, and~$\pi_{|\Sigma_\eta}$ is not an immersion. In order to make~$\Sigma_\eta$ easier to use, we deform it into an immersed surface.

\begin{introtheorem}\label{introtheoremA}
Given a geodesic multi-curve~$\Gamma$ and an Eulerian coorientation~$\eta$ of~$\Gamma$, there exists a small isotopy~$(f_t)_t$ of the associated surface~$\Sigma_\eta$ such that~$f_0=\iota_{|\Sigma_\eta \xhookrightarrow{} T^1S}$ and~$\pi\circ f_1:\Sigma_\eta\to S$ is an immersion (see Figure \ref{fig:exampleisotopy}).
\end{introtheorem}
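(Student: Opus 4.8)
The plan is to argue locally, using the explicit description of $\Sigma_\eta$ produced in Section~\ref{section:construction}, and then to glue. Decompose $\Sigma_\eta$ into finitely many elementary pieces: over a neighbourhood in $\Gamma$ of the interior of each edge of $\Gamma$ an \emph{edge band}, contained in $\pi^{-1}$ of that edge, and over each $4$-valent vertex a \emph{vertex block}, contained in $\pi^{-1}$ of a small disc; the blocks are glued to the adjacent bands along vertical arcs lying over the vertices. Fixing a chart identifying $T^1S$ with $\R^2\times S^1$ so that $\pi$ is the projection onto $\R^2$ and the geodesic runs along $\{y=0\}$, an edge band is a union of \emph{vertical strips} $\{(s,0,\theta):s\in\R,\ \theta\in I_j\}$, where each arc $I_j$ is \emph{strictly smaller} than the full fibre circle $S^1$ (by construction the strips occupy only the part of the fibre prescribed by $\eta$, between the two directions tangent to the geodesic). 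This is the crucial feature, and it is also exactly what makes $\pi|_{\Sigma_\eta}$ fail to be an immersion: on each such strip $d\pi$ has rank $1$, collapsing the $\theta$-direction.

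First I would perturb a single edge band. Since $I_j\subsetneq S^1$, one can pick a smooth $\rho_j\colon I_j\to\R$ with $\rho_j'>0$ on the interior of $I_j$ and $\rho_j\equiv 0$ near the endpoint(s) of $I_j$ lying on $\partial\Sigma_\eta$. Pushing the strip to $\{(s,\varepsilon\rho_j(\theta),\theta)\}$ via the small isotopy $f_t(s,y,\theta)=(s,y+t\varepsilon\rho_j(\theta),\theta)$, which fixes $\partial\Sigma_\eta$ pointwise, turns the projection into $(s,\theta)\mapsto(s,\varepsilon\rho_j(\theta))$, whose partials $\partial_s\mapsto(1,0)$ and $\partial_\theta\mapsto(0,\varepsilon\rho_j'(\theta))$ are everywhere independent; so this piece becomes immersed in $S$, up to its boundary. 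Doing this on every edge band reduces the statement to the vertex blocks, now with boundary behaviour prescribed by the functions $\rho_j$.

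It remains to push each vertex block to an immersed surface agreeing, near its boundary, with the edge pushes just chosen; glueing the pieces then produces a single $C^\infty$-small isotopy $(f_t)_t$ of $\Sigma_\eta$ with $f_0=\iota$ and $\pi\circ f_1$ an immersion (the local models being immersed and matching on overlaps, no further interpolation is needed). I expect this vertex analysis to be the main obstacle. There are only finitely many combinatorial types of Eulerian coorientation of a $4$-valent vertex up to symmetry, so it suffices, for each type, to exhibit an explicit immersed local model of the perturbed block restricting on its four sides to vertical-strip pushes of the above form, and to check that the slopes and signs of the four functions $\rho_j$ demanded by these models are simultaneously realisable — here the Eulerian hypothesis on $\eta$ should be precisely what guarantees this compatibility. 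One must also arrange the choice along each edge to interpolate between the requirements imposed by the vertices at its two ends, which is harmless since positivity of $\rho_j'$ is a convex condition. Everything else — smallness of the isotopy, the glueing, and immersivity up to $\partial\Sigma_\eta$ — is then routine.
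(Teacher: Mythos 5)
Your overall strategy (local models over edges and vertices, push the vertical strips off the fibre direction by an amount depending on the fibre coordinate, then glue) is the same general shape as the paper's argument, and your treatment of a single edge band is essentially the paper's parallel transport $g_t:(z,u)\mapsto(z+tiu,u)$ written in coordinates, with $\rho_j$ playing the role of $\cos\theta$. However, there are two genuine gaps. First, the boundary condition you impose on an edge band is self-contradictory: the fibre arc $I_j$ over an interior point of an edge is a half-circle whose \emph{two} endpoints both lie on $\partial\Sigma_\eta$ (they are the two directions tangent to the geodesic), so a function with $\rho_j'>0$ on the interior of $I_j$ cannot be $\equiv 0$ near both endpoints. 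In fact no isotopy fixing $\partial\Sigma_\eta$ pointwise can make the projection immersive up to the boundary; the paper resolves this by excising a tubular neighbourhood $\mathcal N$ of $\partial\Sigma_\eta$, proving the immersion statement only on $\Sigma_\eta\setminus\mathcal N$ (Lemma~\ref{localmodel}), and precomposing with a retraction of $\Sigma_\eta$ onto a neighbourhood of its skeleton $X$. You need some version of this retraction; as written your edge step cannot be carried out.

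Second, and more importantly, the step you yourself identify as ``the main obstacle'' --- the vertex blocks --- is not actually done: you defer it to an unexhibited case-by-case construction of immersed local models and an unverified ``compatibility'' of the four slopes, which you hope follows from the Eulerian hypothesis. That hope misidentifies the mechanism. The paper's key idea at a crossing is uniform and requires no case analysis: for the single isotopy $g_t$, the kernel of $d(\pi\circ g_t)$ is spanned by $U_t=(u,\tfrac1t\tfrac{\partial}{\partial\theta})$, which converges to the generator of the geodesic flow as $t\to\infty$; since $\Sigma_\eta$ is by construction \emph{transverse to the flow} (this, not the Eulerian condition per se, is the relevant input --- it is what dictates the choice of resolution at alternating vertices), $\pi\circ g_t$ is an immersion on any compact set away from $\partial\Sigma_\eta$ for $t$ large, and conjugating by the rescaling $(z,u)\mapsto(sz,u)$ makes the isotopy small. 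Without either this transversality argument or an actual verification of your local models at both alternating and non-alternating (desingularised) vertices, the proof is incomplete at precisely the point where the content lies.
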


We will study several representation of~$\Sigma_\eta$ in Section \ref{section:construction}. We are precisely interested by the immersion of Theorem \ref{introtheoremA}, and by the ribbon graph representation it induces.

\begin{figure}[ht]
\centering
\includegraphics[scale=0.13]{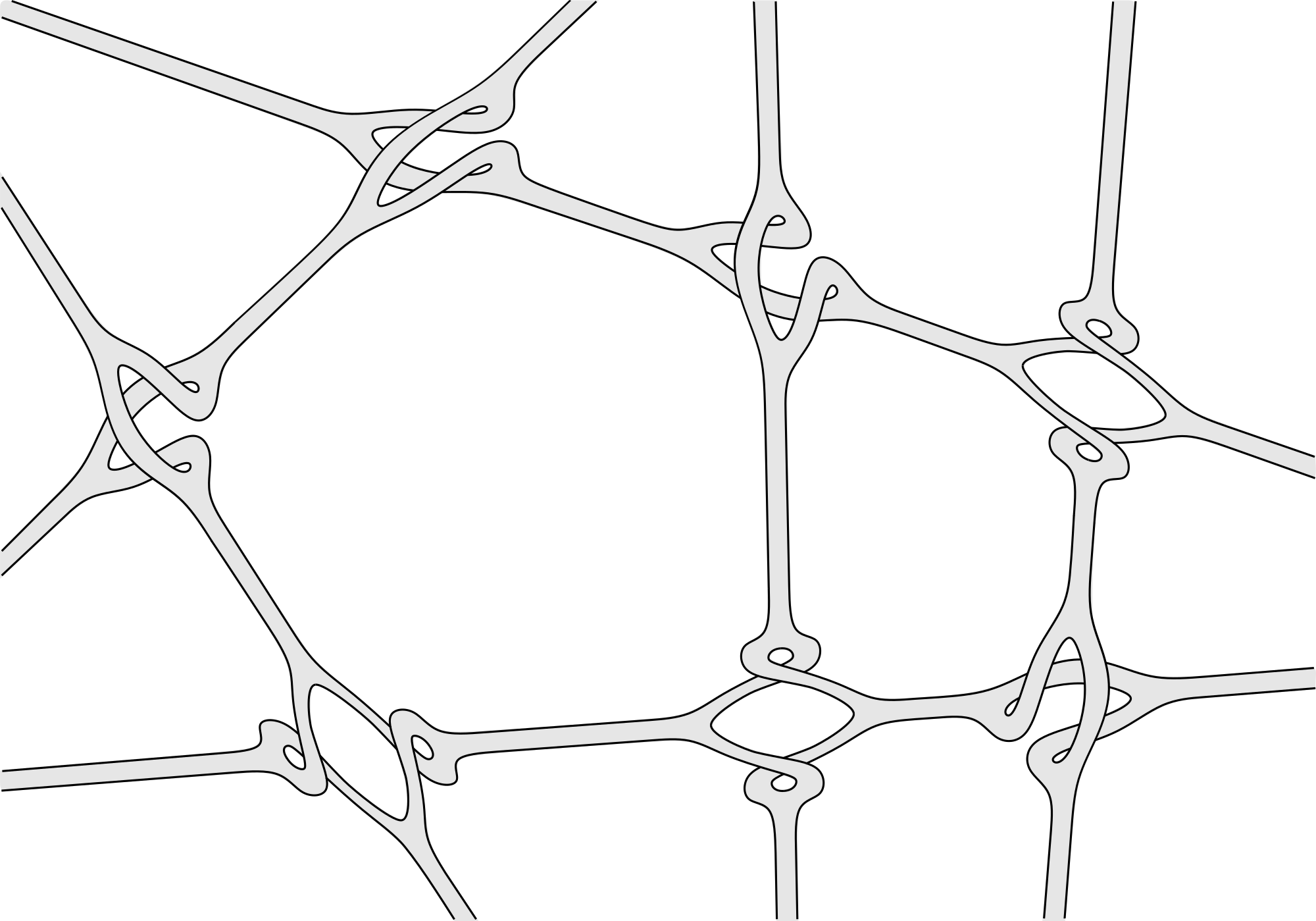}
\caption{A small perturbation of~$\Sigma_\eta$ into a horizontal surface.}
\label{fig:exampleisotopy}
\end{figure}

\paragraph{Partial return maps.} The main idea for computing the first-return map~$\Sigma_\eta\to\Sigma_\eta$ is to define intermediate disjoint and homologous Birkhoff sections~$(\Sigma_i)_i$, so that the first-return map~$r_{\Sigma_\eta}$ is the composition of partial return maps ~$$\Sigma_\eta=\Sigma_0\to\Sigma_1\to\hdots\to\Sigma_{n-1}\to\Sigma_n=\Sigma_\eta$$

We define the surfaces~$\Sigma_i$ by induction using elementary transformations, so that~$r_i:\Sigma_{i-1}\to\Sigma_{i}$ is quite simple to compute. These elementary transformations have a combinatorial and a geometric version. The combinatorial version consists in taking an Eulerian coorientation~$\eta$ and modifying it around one specific face, thus obtaining a new coorientation~$\eta'$. The surfaces~$\Sigma_\eta$ and~$\Sigma_{\eta'}$ are isotopic and easy to compare. If we do this transformation around each face in the right order, we describe a cyclic family of Birkhoff sections~$(\Sigma_i)_{0\leq i\leq n}$, pairwise easily comparable.

The geometric version of this transformation consists in taking~$\eta$ and~$\eta'$ that differ around a face~$f$, and following the flow only in~$T^1_fS$. It describes a map~$\Sigma_\eta\to\Sigma_{\eta'}$ that we call \textbf{partial return map}. The partial return maps together with the family of Birkhoff sections~$(\Sigma_i)_i$ allow us to reconstruct the first-return map.

\begin{introtheorem}\label{introtheoremB}
Let~$\Gamma\subset S$ be a filling geodesic multi-curve of a hyperbolic surface~$S$,~$\eta$ an acyclic Eulerian coorientation of~$\Gamma$ and~$f_1,\hdots,f_n$ be the faces of~$S\setminus\Gamma$, ordered by~$\eta$. Then the first-return map along the geodesic flow on the Birkhoff section~$\Sigma_\eta$ is given by~$r_{\Sigma_\eta}=r_n\circ\hdots\circ r_1$, where~$r_i$ is the partial return map along the face~$f_i$.
\end{introtheorem}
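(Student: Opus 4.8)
The plan is to realize the decomposition~$\Sigma_\eta = \Sigma_0 \to \Sigma_1 \to \dots \to \Sigma_n = \Sigma_\eta$ geometrically and then argue that following the geodesic flow factors through these intermediate surfaces in the prescribed order. First I would set up the family of coorientations: starting from the acyclic~$\eta$, the ordering~$f_1, \dots, f_n$ of the faces is precisely a topological sort of the dual graph~$(\Gamma^\star, \eta)$, so that flipping the coorientation around~$f_1$ (which must be a source of~$(\Gamma^\star, \eta)$) produces a new acyclic Eulerian coorientation~$\eta_1$; iterating, flipping around~$f_i$ at stage~$i$ yields~$\eta_i$, and after flipping every face we return to~$\eta_n = \eta$. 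I would set~$\Sigma_i := \Sigma_{\eta_i}$, using the identification of all Birkhoff sections bounded by~$-\darrow\Gamma$ from \cite{CossariniDehornoy} and the explicit combinatorial model from Section~\ref{section:construction}. The key geometric input, which I would establish as a lemma beforehand, is that~$\Sigma_{\eta_{i-1}}$ and~$\Sigma_{\eta_i}$ differ only inside the solid torus~$T^1_{f_i}S$ over the face~$f_i$, and that outside a neighborhood of that solid torus they literally coincide as subsets of~$T^1S$.

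Next I would make the surfaces~$\Sigma_0, \dots, \Sigma_n$ mutually disjoint. Since consecutive ones agree away from~$T^1_{f_i}S$, I would push~$\Sigma_i$ slightly along the flow direction — using a small parameter~$\epsilon_i$ with~$0 < \epsilon_1 < \dots < \epsilon_n$ — so that the whole cyclic family is embedded and pairwise disjoint while remaining isotopic (through transverse surfaces, hence through Birkhoff sections, by the acyclicity criterion) to the original~$\Sigma_\eta$. The partial return map~$r_i : \Sigma_{i-1} \to \Sigma_i$ is then defined exactly as in the paragraph preceding the statement: flow forward from~$\Sigma_{i-1}$, but only let the flow "act" inside~$T^1_{f_i}S$, i.e.\ a point of~$\Sigma_{i-1}$ lying outside~$T^1_{f_i}S$ maps to the corresponding point of~$\Sigma_i$ unchanged, and a point inside is carried by the flow until it first meets~$\Sigma_i$. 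One must check this is well-defined and a diffeomorphism: every flow segment entering~$T^1_{f_i}S$ leaves it in bounded time (the fiber torus is not flow-invariant), and during that excursion it must cross~$\Sigma_i$ because~$\Sigma_i$ is built to be a local section filling up~$T^1_{f_i}S$ in the relevant region; injectivity and surjectivity follow from the same local-section property together with transversality.

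Finally I would prove~$r_{\Sigma_\eta} = r_n \circ \dots \circ r_1$. The idea is that a full orbit segment from~$x \in \Sigma_0$ to~$r_{\Sigma_\eta}(x) \in \Sigma_n$ can be cut at its successive crossings with~$\Sigma_1, \Sigma_2, \dots, \Sigma_{n-1}$. Because the surfaces are ordered by the topological sort and differ only over distinct faces, an orbit leaving~$\Sigma_0$ first re-enters the family by hitting~$\Sigma_1$ (the excursion relevant to~$r_1$ happens over~$f_1$, the source face, before any excursion over a later face can "count"), then~$\Sigma_2$, and so on, each crossing being exactly the one recorded by the corresponding partial return map; concatenating gives the claimed composition. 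I would phrase this as: the first-return map to the union~$\Sigma_0 \cup \dots \cup \Sigma_{n-1}$ (which, after the isotopy, is just~$n$ parallel pushed-off copies of~$\Sigma_\eta$) sends~$\Sigma_{i-1}$ to~$\Sigma_i$ via~$r_i$ and~$\Sigma_{n-1}$ back to~$\Sigma_0$, so the $n$-th iterate restricted to~$\Sigma_0 = \Sigma_\eta$ is~$r_n \circ \dots \circ r_1$ and equals~$r_{\Sigma_\eta}$.

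\textbf{Main obstacle.} The delicate point is the combinatorial-to-geometric bridge: verifying that the flow between consecutive pushed-off copies genuinely realizes the "partial return over~$f_i$" and nothing more — i.e.\ that no orbit segment sneaks an extra intersection with some~$\Sigma_j$ out of order, and that every orbit excursion into~$T^1_{f_i}S$ is caught exactly once. This requires a careful local analysis of~$\Sigma_\eta$ inside each fiber solid torus~$T^1_f S$ (using the explicit model of Section~\ref{section:construction} and the immersion from Theorem~\ref{introtheoremA}), controlling where orbits enter and exit the fibers over the faces and how the coorientation flip relocates the sheet of~$\Sigma$ there. Getting the disjointness-preserving isotopy to be compatible with this local picture simultaneously for all~$n$ faces is where the real work lies.
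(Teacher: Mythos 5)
Your overall architecture matches the paper's: the same cyclic family $\Sigma_0,\dots,\Sigma_n$ obtained by successive flips, the same partial return maps, and the same goal of concatenating orbit segments. (One small slip: the face flipped at each stage is a \emph{sink} of $(\Gamma^\star,\eta)$ in the paper's convention --- the coorientation points into the minimal face $f_1$ --- not a source.) But the decisive step is missing. You reduce everything to the claim that the first-return map to the union $\Sigma_0\cup\dots\cup\Sigma_{n-1}$ sends $\Sigma_{i-1}$ to $\Sigma_i$ via $r_i$, and that between two consecutive hits of $\Sigma_0$ an orbit meets each $\Sigma_j$ exactly once. This rests on the surfaces being pairwise disjoint and stacked as product slabs, which you assert via a push-off "by increasing $\epsilon_i$" but never verify; as the paper itself remarks right after the statement, pairwise disjointness of the whole family is \emph{equivalent} to the theorem, so invoking it without proof is close to circular. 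Your "main obstacle" paragraph correctly locates the difficulty (an orbit could a priori cross $\Sigma_n=\Sigma_0$ again before the composition $r_n\circ\dots\circ r_1$ has finished acting, i.e.\ $t_n$ could exceed the first return time), but offers no mechanism to rule it out.

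The paper closes exactly this gap combinatorially rather than by a global slab picture. It works on the dense set of points whose orbit returns to $\Sigma_\eta$ without passing over a vertex fiber, tracks the point $r_k\circ\dots\circ r_1(x,u)$, and shows that once this point lies on $\Sigma_0$ it is fixed by all subsequent partial return maps. The key input is Lemma~\ref{lemma:partialreturns}: after $k$ flips, an edge $e$ satisfies $\eta_k(e)=\eta_0(e)$ iff its two adjacent faces are on the same side of $f_k$ in the order; applied to the exit point $(y,v)$ of a face-crossing, this forces the face into which $v$ points to be already processed, so $(y,v)\notin\supp(r_l)$ for all later $l$. Nothing in your proposal plays the role of this lemma, so as written the argument does not establish that $r_n\circ\dots\circ r_1$ computes the \emph{first} return rather than a later one. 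To repair your version you would either have to prove the pairwise disjointness and product structure of all $n$ surfaces directly (essentially re-deriving the theorem), or import the coorientation-tracking argument above.
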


In this theorem, we order the faces so that if~$\eta$ goes from the face~$f_i$ to~$f_j$ around an edge, then~$j<i$. We study this elementary transformation in Section \ref{section:combiflip}, together with the combinatorial tools needed to express precisely Theorem \ref{introtheoremC}.

\paragraph{Explicit first-return map.}

To compute the first-return map, we need to compute explicitly the partial return maps. Fix~$r_i:\Sigma_{i-1}\to\Sigma_i$ a partial return map. We would like to compose~$r_i$ with a nice \textbf{correction} function~$\cor_i$ so that the composition~$\Sigma_{i-1}\xrightarrow{r_i}\Sigma_{i}\xrightarrow{\cor_i}\Sigma_{i-1}$ is a Dehn twist. We will use the ribbon representation of~$\Sigma_{i-1}$ and~$\Sigma_{i}$ to compare them, especially around the vertices at which they differ. After defining~$\cor_i$, the composition~$\cor_i\circ r_i$ is isotopic to a negative Dehn twist along the curve~$\gamma_f$, as shown in Figure~\ref{fig:curves}.

\begin{figure}[ht]
\centering
\begin{center}
\begin{picture}(100,32)(0,0)
%\graphpaper(0,0)(100,32)
\put(0,0){\includegraphics[width=100mm]{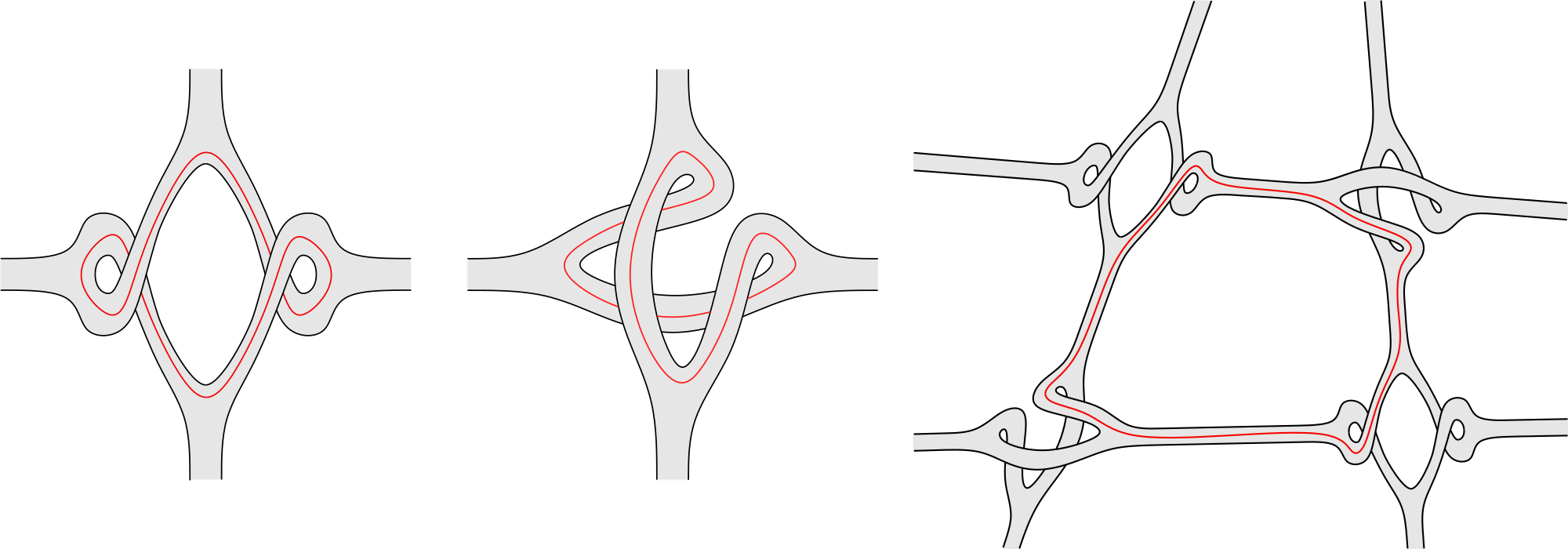}}
\put(78.5,14.5){$f$}
\color{red}
\put(11.5,17.5){$\gamma_v$}
\put(42,17.5){$\gamma_v$}
\put(91,14){$\gamma_f$}
\color{black}
\end{picture}
\end{center}
\caption{Curves~$\gamma_v$ for a vertex~$v$ and~$\gamma_f$ for a face~$f$.}
\label{fig:curves}
\end{figure}

Figure \ref{fig:curves} only shows~$\gamma_f$ for a sink face. A complete description of~$\gamma_f$ in the general case is done in Section \ref{section:first_return_map}. This computation, together with Theorem \ref{introtheoremB}, allows to compute the first-return map as a product of negative Dehn twists.

\begin{introtheorem}\label{introtheoremC}
Let~$\eta$ be an acyclic Eulerian coorientation and~$\Sigma_\eta$ its corresponding Birkhoff section. Then the first-return map~$r:\Sigma_\eta\to\Sigma_\eta$ is the product of explicit negative Dehn twists along the explicit curves~$\gamma_v$ and~$\gamma_f$ for all~$v\in\Gamma_0$ and~$f\in\Gamma_0^\star$. The order of the Dehn twists is given by~$\eta$.
\end{introtheorem}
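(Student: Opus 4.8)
The plan is to combine the combinatorial decomposition of Theorem~\ref{introtheoremB} with a local analysis of each partial return map $r_i:\Sigma_{i-1}\to\Sigma_i$. By Theorem~\ref{introtheoremB} we have $r_{\Sigma_\eta}=r_n\circ\cdots\circ r_1$, where each $r_i$ is the partial return map along the face $f_i$, and the faces are ordered so that $\eta$ respects the indexing. Since the surfaces $\Sigma_{i-1}$ and $\Sigma_i$ are isotopic (they come from Eulerian coorientations $\eta_{i-1}$ and $\eta_i$ differing only around $f_i$), we may fix once and for all an identification of every $\Sigma_i$ with $\Sigma_\eta$ via the canonical combinatorial model $\Sigma_\Gamma$; under this identification each $r_i$ becomes a self-map of $\Sigma_\eta$. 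The key point will be to show that, after inserting the correction diffeomorphism $\cor_i:\Sigma_i\to\Sigma_{i-1}$ described in Section~\ref{section:first_return_map}, the composition $\cor_i\circ r_i$ is isotopic to a single negative Dehn twist $\tau_{\gamma_{f_i}}^{-1}$ along the explicit curve $\gamma_{f_i}$, together with the negative Dehn twists $\tau_{\gamma_v}^{-1}$ along the curves $\gamma_v$ associated to the vertices $v$ incident to $f_i$ (the curves from Figure~\ref{fig:curves}). Telescoping the correction maps then yields $r_{\Sigma_\eta}$ as the product of all these negative Dehn twists, in the order dictated by $\eta$.

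\textbf{Key steps.} First I would set up the ribbon-graph representation of $\Sigma_\eta$ furnished by the immersion of Theorem~\ref{introtheoremA}, and record precisely how the ribbon structure changes under an elementary flip at a face $f$: only the gluing data around the vertices of $f$ and around $f$ itself are modified. Second, I would compute the partial return map $r_i$ in these coordinates: because following the flow is restricted to $T^1_{f_i}S$, the map $r_i$ is supported in a neighbourhood of the part of $\Sigma_\eta$ lying over $f_i$ and its bounding vertices, so it can be written explicitly as a composition of shearing maps on finitely many rectangles/annuli of the ribbon graph. Third, I would define the correction $\cor_i$ as the natural ribbon isomorphism $\Sigma_i\to\Sigma_{i-1}$ (it is the identity away from $f_i$), and verify by a direct isotopy argument in the ribbon model that $\cor_i\circ r_i$ realizes exactly the prescribed negative Dehn twists — this is where Figure~\ref{fig:curves} is made rigorous, checking separately the sink-face case and the general case. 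Fourth, I would assemble the pieces: $r_{\Sigma_\eta}=(\cor_n^{-1}\circ(\cor_n\circ r_n))\circ\cdots=(\text{products of }\tau^{-1})$, and observe that $\cor_n\circ\cdots\circ\cor_1$ is isotopic to the identity (the full cycle of flips returns $\eta$ to itself), so the correction maps disappear and only the Dehn twists remain, ordered by $\eta$.

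\textbf{Main obstacle.} The delicate step is the third one: verifying that $\cor_i\circ r_i$ is \emph{exactly} a product of negative Dehn twists and not merely a product of twists composed with some residual diffeomorphism. This requires a careful bookkeeping of the ribbon-graph combinatorics around each vertex incident to $f_i$ — in particular controlling how the cyclic orderings of half-edges at those vertices are permuted by the flip, and matching this permutation to the action of the Dehn twists $\tau_{\gamma_v}^{-1}$ on curves in the surface. One must also check the signs: that every twist is negative, which should follow from the multiplicity convention $(-1,\dots,-1)$ on $\partial\Sigma_\eta$ together with the coorientation of $\Sigma_\eta$ by the flow, but the sign tracking through the isotopy of Theorem~\ref{introtheoremA} and through the correction maps needs to be done explicitly. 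A secondary technical point is to confirm that the curves $\gamma_v$ and $\gamma_f$ are simple closed curves on $\Sigma_\Gamma$ independent of $\eta$ (only which ones appear, and in which order, depends on $\eta$), so that the final statement is genuinely about a fixed finite family of curves on the common model.
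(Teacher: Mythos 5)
Your overall architecture (Theorem~\ref{introtheoremB}, then correct each partial return map into a Dehn twist, then telescope) matches the paper's, but your fourth step contains a genuine error that breaks the accounting of the vertex twists. You assert that the accumulated correction $\cor_n\circ\cdots\circ\cor_1$ is isotopic to the identity ``because the full cycle of flips returns $\eta$ to itself.'' This does not follow and is false: the flips return the \emph{coorientation} to itself, but the induced diffeomorphism of the surface is a nontrivial mapping class. In the paper, each $\cor_i$ is a composition of slides, one per corner of $f_i$; over the full cycle every vertex $v$ receives exactly four slides, and by Lemma~\ref{lemma:DehnTwistVertex} these four slides compose to the negative Dehn twist along $\gamma_v$. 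Hence $g_n^{-1}=\prod_{v\in\Gamma_0}\tau_{\gamma_v}^{-1}$, and this residual correction is precisely the source of all the vertex Dehn twists in the final formula. If the corrections vanished as you claim, the first-return map would be a product of face twists only, which contradicts the statement being proved.

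Your attempt to recover the vertex twists by claiming that each $\cor_i\circ r_i$ already contains the twists $\tau_{\gamma_v}^{-1}$ for the vertices $v$ incident to $f_i$ does not repair this, and is itself incorrect: Proposition~\ref{Dehntwistface} shows that $\cor_i\circ r_i$ is a \emph{single} negative Dehn twist along $\gamma_{f_i}$ and nothing else. Moreover, if each incident face contributed the twist along $\gamma_v$, every vertex curve would appear four times in the product (once per quadrant), whereas the theorem has each $\gamma_v$ appearing exactly once. The fix is to keep the corrections explicit, compute $g_n^{-1}$ as the vertex-twist product via the slide lemma, and then use the commutation relation $g\circ\tau_\gamma\circ g^{-1}=\tau_{g(\gamma)}$ to conjugate the face twists back to $\Sigma_\eta$ and reorder everything according to the coherent order (which is also why the paper needs the coherent order on $\Gamma_0\cup\Gamma_2$, not just an order on the faces). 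A secondary inaccuracy: the curves $\gamma_f^\eta$ do depend on $\eta$; making them live on a single $\eta$-independent model is the content of Corollary~\ref{introcorollaryD} and requires additional slide and Hurwitz-type manipulations, not something you can assume at the start of the proof of Theorem~\ref{introtheoremC}.
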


A precise statement and a proof of this theorem will be given in Section \ref{section:first_return_map}.

\begin{introcorollary}\label{introcorollaryD}
Let~$S$ be a hyperbolic surface,~$\Gamma$ a finit collection of closed geodesics on~$S$, and consider the geodesic flow on~$T^1S$. There exists a common combinatorial model~$\Sigma_\Gamma$ for all Birkhoff sections with boundary~$-\darrow{\Gamma}$, and an explicit family of simple closed curves~$\gamma_1, \cdots, \gamma_n$ in~$\Sigma_\Gamma$ such that the first-return maps for these Birkhoff sections are of the form~$\tau_{\gamma_{\sigma(1)}}^{-1}\circ\dots\circ\tau_{\gamma_{\sigma(n)}}^{-1}$ for some permutation~$\sigma$ of~$\{1, \cdots, n\}$. 
\end{introcorollary}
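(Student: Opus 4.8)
The plan is to obtain Corollary~\ref{introcorollaryD} by packaging Theorems~\ref{introtheoremA},~\ref{introtheoremB} and~\ref{introtheoremC} together with the classification of Birkhoff sections with symmetric boundary of~\cite{CossariniDehornoy}. Throughout we keep the running assumptions that~$\Gamma$ is filling and in generic position, so that the combinatorics of the vertices~$\Gamma_0$, the edges, and the faces~$\Gamma_0^\star$ of~$\Gamma$, and the associated model surfaces~$\Sigma_\eta$, are at our disposal.

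The first move is to reduce to the~$\Sigma_\eta$'s and to set up the common model. By~\cite{CossariniDehornoy}, every Birkhoff section bounded by~$-\darrow\Gamma$ is isotopic in~$T^1S$ to some~$\Sigma_\eta$, with~$\eta$ ranging over the Eulerian coorientations of~$\Gamma$, and~$\Sigma_\eta$ is an honest Birkhoff section exactly when~$\eta$ is acyclic (no oriented cycle in~$(\Gamma^\star,\eta)$); hence it suffices to understand, up to isotopy, the first-return map of~$\Sigma_\eta$ for each acyclic~$\eta$. Now all the~$\Sigma_\eta$ are built on one and the same ribbon graph, read off from~$\Gamma$ alone in Section~\ref{section:construction}: the coorientation~$\eta$ only prescribes how the surface lies transversally to~$\phi$, not its diffeomorphism type. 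I would take~$\Sigma_\Gamma$ to be this common surface, and use Theorem~\ref{introtheoremA}, through the immersed picture~$\pi\circ f_1$ it produces, to get for each acyclic~$\eta$ a diffeomorphism~$\Sigma_\eta\xrightarrow{\sim}\Sigma_\Gamma$ that is canonical up to isotopy. Inside~$\Sigma_\Gamma$ lie the explicit simple closed curves~$\gamma_v$, one per vertex~$v\in\Gamma_0$, and~$\gamma_f$, one per face~$f\in\Gamma_0^\star$, constructed in Section~\ref{section:first_return_map} (Figure~\ref{fig:curves}); relabelling this finite family as~$\gamma_1,\dots,\gamma_n$ yields the family in the statement.

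It then remains to assemble the product. Fix an acyclic~$\eta$ and order the faces of~$S\setminus\Gamma$ compatibly with~$\eta$; Theorem~\ref{introtheoremB} writes~$r_{\Sigma_\eta}$ as an ordered composition of the partial return maps~$r_i$ along these faces. Theorem~\ref{introtheoremC} refines this: composing~$r_i$ with its correction~$\cor_i$ gives a negative Dehn twist along~$\gamma_{f_i}$, the corrections being themselves products of negative Dehn twists along the curves~$\gamma_v$, and reassembling the pieces shows that~$r_{\Sigma_\eta}$ is a product of negative Dehn twists in which each~$\gamma_v$ and each~$\gamma_f$ occurs exactly once, in an order prescribed entirely by~$\eta$. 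Transporting along the identification~$\Sigma_\eta\cong\Sigma_\Gamma$ above, this reads~$\tau_{\gamma_{\sigma(1)}}^{-1}\circ\dots\circ\tau_{\gamma_{\sigma(n)}}^{-1}$ for a permutation~$\sigma=\sigma_\eta$ of~$\{1,\dots,n\}$ that depends only on~$\eta$, which is the assertion.

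The hard part will not be this assembly but the verification that the curves~$\gamma_v,\gamma_f\subset\Sigma_\Gamma$, and the identifications~$\Sigma_\eta\cong\Sigma_\Gamma$, are genuinely independent of~$\eta$ up to isotopy; without this one would only learn that each first-return map is an ordered product of negative twists along \emph{some}~$\eta$-dependent family of curves, not along the one fixed family~$\gamma_1,\dots,\gamma_n$ the corollary requires. Concretely, I expect the work to be in showing that an elementary combinatorial flip~$\eta\rightsquigarrow\eta'$, which is constructed precisely to make~$\Sigma_\eta$ and~$\Sigma_{\eta'}$ isotopic, carries each~$\gamma_v$ and each~$\gamma_f$ to the corresponding curve for~$\eta'$. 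That compatibility is what the study of the elementary transformation in Section~\ref{section:combiflip} is meant to provide; granting it, Corollary~\ref{introcorollaryD} follows from the three steps above.
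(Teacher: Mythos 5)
Your reduction to the surfaces $\Sigma_\eta$ and the assembly of Theorems \ref{introtheoremA}, \ref{introtheoremB}, \ref{introtheoremC} match the paper, and you have correctly located the difficulty: comparing the families $\{\gamma_v^\eta,\gamma_f^\eta\}$ and $\{\gamma_v^\nu,\gamma_f^\nu\}$ across two acyclic coorientations under an identification of the underlying surfaces. But the step you ``grant'' is the entire content of the corollary, and the compatibility you propose to prove is false as stated. The paper fixes one acyclic $\eta$, sets $\Sigma_\Gamma=\Sigma_\eta$, and compares it to each $\Sigma_\nu$ by an explicit diffeomorphism $sl$ obtained from slides around every vertex (nine local configurations for $(\eta,\nu)$ up to symmetry). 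One finds $sl(\gamma_v^\eta)=\gamma_v^\nu$, but for a face $f$ adjacent to $v$ the curves $sl(\gamma_f^\eta)$ and $\gamma_f^\nu$ in general \emph{differ by a Dehn twist along $\gamma_v$}; so the product furnished by Theorem \ref{introtheoremCrestated} for $\nu$ is not literally a product along the transported family. The missing idea is the relation $g\circ\tau_\gamma\circ g^{-1}=\tau_{g(\gamma)}$ of Remark \ref{commutation}: because the coherent order places $v$ on the correct side of $f$, commuting $\tau_{\gamma_v^\nu}^{-1}$ past $\tau_{\gamma_f^\nu}^{-1}$ in the product replaces $\gamma_f^\nu$ by $\tau_{\gamma_v}^{-1}(\gamma_f^\nu)=sl(\gamma_f^\eta)$. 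The discrepancy between the two families of curves is thus absorbed into the reordering of the twists, which is exactly why only the permutation $\sigma$ ends up depending on $\nu$. Without this commutation argument your scheme only yields a product along an $\eta$-dependent family, the weaker conclusion you yourself flag.

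A second problem is your proposed tool for the comparison. An elementary flip preserves the cohomology class $[\eta]$, so a chain of flips can only connect cohomologous coorientations, i.e., Birkhoff sections that are already isotopic along the flow; the corollary ranges over all integral points of the fibered face, hence over non-cohomologous $\eta$. The flip-by-flip comparison appears in the paper only in the separate lemma asserting that decompositions attached to \emph{cohomologous} coorientations are Hurwitz equivalent (a different, conjugation-allowing notion of equivalence). The direct vertex-by-vertex slide comparison described above is what handles an arbitrary pair of acyclic Eulerian coorientations.
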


In Theorem \ref{introtheoremC}, the Birkhoff sections and the curves supporting the Dehn twists are explicit, and only depend on the choice of one coorientation. Also the ordering of the Dehn twists is almost canonical. In Corollary \ref{introcorollaryD}, there are only one abstract Birkhoff surface and collection of curves, that are also explicit. But the ordering of the negative Dehn twists and the first return map is less explicit, and need more work to be constructed by hand.

\paragraph{Example on a flat torus.} On a flat torus, the classification of Birkhoff sections is different, and can be found in \cite{Dehornoy2015}. However the surfaces~$\Sigma_\eta$ can be defined similarly and they are Birkhoff sections. Also Theorem~$A$,~$B$ and~$C$ are still true for these surfaces. However it is simpler to illustrate them on the torus.

In Figure \ref{fig:fullexample}, we briefly illustrate the theorems on the flat torus, given by a square whose opposite sides are identified. Let~$\Gamma$ and~$\eta$ be the multi-curve and the coorientation given on the picture. Theorem \ref{introtheoremA} gives an immersion of the Birkhoff section~$\Sigma_\eta$ into the torus, which is represented on the right. Four examples of the curves~$\gamma_f$ (in red) and~$\gamma_v$ (in blue and green) are also represented.

We order, from~$1$ to~$12$, the vertices of~$\Gamma$ and the faces it delimitates. For this, we complete the natural order given by the coorientation~$\eta$ of the faces, using additional rules explained in Section \ref{section:combiflip}. Theorem \ref{introtheoremC} then states that the first-return map~$r$ on~$\Sigma_\eta$ is a product of negative Dehn twists, with the order previously chosen. So that if~$T\gamma$ denotes the negative Dehn twist along~$\gamma$, then:
\[r_{\Sigma_\eta} = T\gamma_{f_{12}}\circ T\gamma_{f_{11}}\circ T\gamma_{v_{10}}\circ T\gamma_{v_9}\circ T\gamma_{v_8}\circ T\gamma_{v_7}\circ T\gamma_{f_6}\circ T\gamma_{f_5}\circ T\gamma_{v_4}\circ T\gamma_{v_3}\circ T\gamma_{f_2}\circ T\gamma_{f_1} \]

\begin{figure}[ht]
\centering
\begin{center}
\begin{picture}(150, 100)(0,0)
\put(0,0){\includegraphics[width = 150mm]{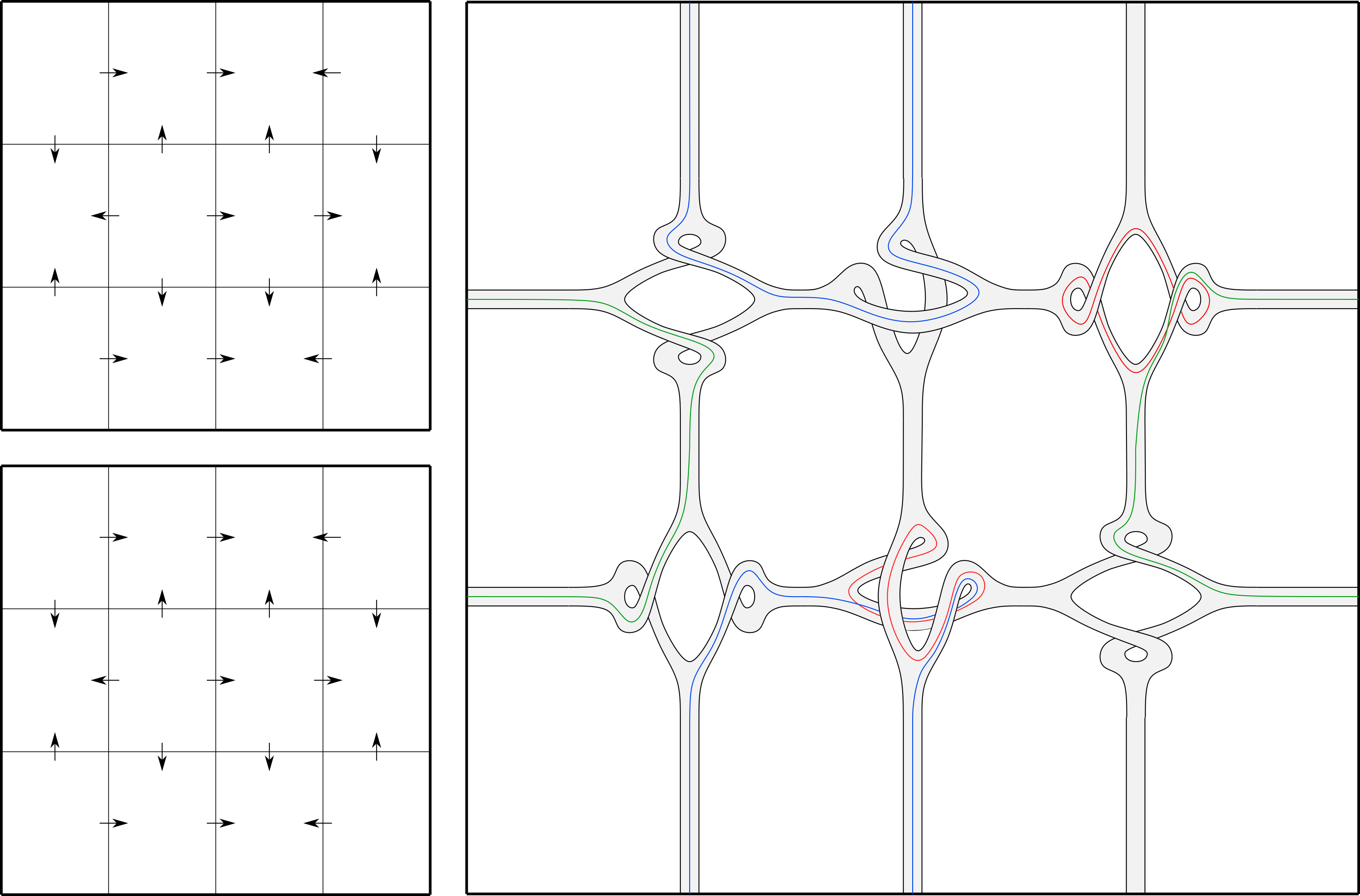}}

\put(4, 38.5){$12$}
\put(17, 38.5){$6$}
\put(29, 38.5){$2$}
\put(5.2, 22.5){$1$}
\put(16, 22.5){$11$}
\put(29, 22.5){$5$}
\put(11, 30.5){$9$}
\put(23, 30.5){$7$}
\put(34.5, 30.5){$3$}
\put(10, 14.5){$10$}
\put(23, 14.5){$8$}
\put(34.5, 14.5){$4$}

\color{gray!30!white}
\put(4, 7){$12$}
\put(17, 7){$6$}
\put(29, 7){$2$}
\put(40, 7){$12$}
\put(40, 38.5){$12$}
\put(40.7, 22.5){$1$}
\color{black}

%\graphpaper(0,0)(50,50)
\end{picture}
\end{center}

\caption{Example on a flat torus~$\sfrac{\R^2}{\Z^2}$.}
\label{fig:fullexample}
\end{figure}

I am grateful to P.Dehornoy for introducing me to the subject, and together with E.Lanneau for the continuous discussions and remarks. I thank Burak \"Ozba\u{g}ci for the interesting discussions and remarks.

\newpage
\setcounter{introtheorem}{0}

\section{Representations of the Birkhoff sections~$\Sigma_\eta$\label{section:construction}}

\begin{mainconventions}\label{convention}
	In this article, we will focus on the following assumptions, which allow us to study the first-return map on the Birkhoff section~$\Sigma_\eta$ (constructed in Section~\ref{subsection:construction}). We fix a hyperbolic surface~$S$,~$\phi$ the geodesic flow on~$T^1S$,~$\Gamma\subset S$ a filling geodesic multi-curve in generic position, and~$\eta$ an Eulerian coorientation (defined in Section \ref{subsection:construction}) such that~$\Sigma_\eta$ is a Birkhoff section of~$\phi$.  The choice of the hyperbolic metric on~$S$ has a very little influence on what we will discuss, only the combinatorics of~$\Gamma$ matters.  
\end{mainconventions}

Starting from~$\eta$, an object called a coorientation of~$\Gamma$, we construct a Birkhoff section~$\Sigma_\eta$ of the geodesic flow~$\phi$. We then find good representations of~$\Sigma_\eta$, including a ribbon graph representation. It will later help us to do explicit computations. Our two goals in this section are to prove Theorem \ref{introtheoremA} and to study some elementary properties of the ribbon representation.

\subsection{Construction of~$\Sigma_\eta$ \label{subsection:construction}}

In this subsection, we construct the surface~$\Sigma_\eta$. This construction and its first properties come from \cite{CossariniDehornoy}.

See~$\Gamma$ as a graph~$(\Gamma_0,\Gamma_1)$ in~$S$, were~$\Gamma_0$ is the set of double points of~$\Gamma$, and~$\Gamma_1$ the set of edges bounded by~$\Gamma_0$. We also denote by~$\Gamma_2$ the set of faces of~$S$ bounded by~$\Gamma$. We consider a coorientation~$\eta$ of~$\Gamma$, in the sense that~$\eta$ is the union of a transverse orientation for every edge in~$\Gamma_1$ (see Figure \ref{fig:fullexample} left). We are interested in \textbf{Eulerian coorientations}, that is, around every vertex there are as many edges locally oriented clockwisely and anticlockwisely. In particular, around a vertex, there are two ways to coorient~$\Gamma$ up to rotation, that we call the alternating and non-alternating vertices (see Figure \ref{fig:vertex}).

\begin{definition}
We denote by~$\coorientation{\Gamma}$ the set of all Eulerian coorientations of~$\Gamma$.
\end{definition}

\begin{figure}[ht]
\centering
\begin{center}
\begin{picture}(80,45)(0,0)
\put(0,5){\includegraphics[width = 80mm]{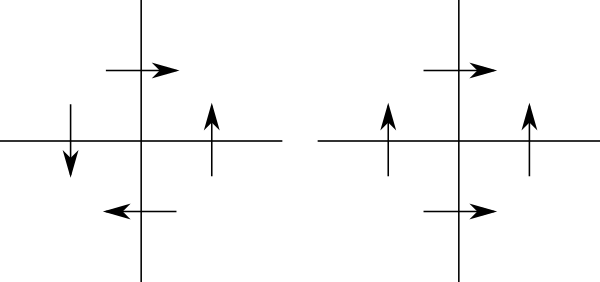}}
\put(10,0){alternating}
\put(47,0){non-alternating}
%\graphpaper(0,0)(80,45)
\end{picture}
\end{center}
\caption{Eulerian coorientation around a vertex.}
\label{fig:vertex}
\end{figure}

\begin{examples}
\begin{itemize}
\item We can coorient every geodesics of~$\Gamma$ and combine them in an Eulerian coorientation of~$\Gamma$, with only non-alternating vertices.
\item If~$[\Gamma]\equiv 0\in H^1(S,\sfrac{\Z}{2\Z})$, we can color the faces of~$\Gamma\subset S$ in black and white, and then define the coorientation that goes from white to black. It has only alternating vertices. This is a historical example which corresponds to a previous work by Birkhoff \cite{Birkhoff314}.  In \cite{A'Campo98} and \cite{Ishikawa2004}, N.A'Campo and M.Ishikawa computed the first-return map for this choice of coorientation, in similar contexts, and proved that it is a product of three Dehn multi-twists. The curves supporting these twists correspond to the white faces, the double points of~$\Gamma$, and to the black faces. These curves will also appear in our construction. 
\end{itemize}
\end{examples}

We now fix an Eulerian coorientation~$\eta$ and construct the surface~$\Sigma_\eta$. The first step is to define a vertical~$2$-complex~$\ehat{\Sigma_\eta}$ in~$T^1S$. For every edge~$e\in\Gamma_1$, let~$r_e=\{(x,v)\in T^1S| x\in\Gamma,\text{$v$ and~$\eta_e$ are in the same direction}\}$ be a vertical rectangle (see Figure \ref{fig:rectangle}). Then define the~$2$-complex~$\ehat{\Sigma_\eta} = \cup_{e\in\Gamma^1}r_e$. Apart from the fibers of the non-alternating vertices, it is a topological surface with boundary~$-\darrow\Gamma$.

Let~$v$ be an alternating vertex of~$\Gamma$. On the fiber~$T^1_vS$, the complex~$\ehat{\Sigma_\eta}$ admits a degree~$4$ edge, as a~$X$-shape times~$[0,1]$. We need to resolve this singularity. There are two ways to desingularise and smooth~$\ehat{\Sigma_\eta}$ into a surface around~$T^1_vS$, but only one is transverse to~$\phi$. We desingulerise~$\ehat\Sigma_\eta$ and define~$\Sigma_\eta$ the smoothing of~$\ehat{\Sigma_\eta}$ into a surface transverse to~$\phi$. A local lift of~$\Sigma_\eta$ to~$\R^2\times \R$ is represented in Figure \ref{fig:representations}.c. This surface is unique up to a small isotopy along the flow. To simplify forthcoming expressions, we denote by~$\Sigma_\eta$ the interior of the surface, but we still consider its boundary~$\partial\Sigma_\eta=-\darrow\Gamma$.

\begin{remark}
We will see that the diffeomorphism class of the surface~$\Sigma_\eta$ does not depend on the type of vertices induced by the coorientation~$\eta$. Thus it does not depend on the coorientation~$\eta$ itself. However its isotopy type inside~$T^1S$ depends on~$\eta$, as explained at the end of the section.
\end{remark}

Given~$X\subset T^1S$, denote by~$i_{|X}$ the inclusion of~$X$ into~$T^1S$. For a given property~$\mathcal{P}$, we say that there exist a \textbf{small} map~$h:\Sigma_\eta\to T^1S$ that satisfies~$\mathcal{P}(h)$ if:~$\forall\epsilon>0,\exists\mu>0$ such that if a smoothing~$s:\ehat{\Sigma_\eta}\to\Sigma_\eta$ satisfies~$\dist{i_{|\Sigma_\eta}\circ s}{i_{|\ehat{\Sigma_\eta}}}{\Class{1}}<\mu$, then there exists a diffeomorphism~$h$ that satisfies~$\mathcal{P}$ such that~$\dist{h}{i_{|\Sigma_\eta}}{\Class{1}}<\epsilon$. We extend this vocabulary for isotopies.

\paragraph{Classification of the Birkhoff sections with boundary~$-\darrow\Gamma$.}

Given a coorientation~$\eta$ and a generic closed curve~$\gamma$ in~$S$, we can count the algebraic intersection between~$(\Gamma,\eta)$ and a curve~$\gamma$, which we write~$\eta(\gamma)$.

\begin{lemma}\cite{CossariniDehornoy}
If~$\eta$ is Eulerian,~$\eta(\gamma)$ depends only on the homology class~$[\gamma]\in H_1(S,\Z)$. Thus the coorientation~$\eta$ induces a cohomology class~$[\eta]\in H^1(S,\Z)$.
\end{lemma}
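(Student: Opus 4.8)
The plan is to promote the cooriented graph $(\Gamma,\eta)$ to an honest cooriented closed $1$-submanifold of $S$ and then invoke Poincaré duality. First recall that a cooriented $1$-submanifold of the oriented surface $S$ automatically carries an orientation (declare a tangent vector positive exactly when the pair (coorienting normal, tangent vector) is a positively oriented frame of $S$), hence represents a class in $H_1(S;\Z)$; Poincaré duality $H^1(S;\Z)\xrightarrow{\ \sim\ }H_1(S;\Z)$ turns that into a class in $H^1(S;\Z)$ whose pairing with any $[\gamma]$ equals the algebraic intersection number. So it suffices to produce a genuine cooriented submanifold $\Gamma'\subset S$ with $\eta(\gamma)=\Gamma'\cdot\gamma$ for every generic $\gamma$, and then set $[\eta]$ to be the Poincaré dual of $[\Gamma']$.

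The crux is the local resolution of the $4$-valent vertices of $\Gamma$. Near a vertex $v$ there are exactly two ways to smooth $\Gamma$ into a pair of disjoint embedded arcs (the two planar resolutions of the crossing; the third pairing of the four edge-germs keeps the transverse crossing and is excluded). I claim the Eulerian hypothesis is precisely the condition that one of these two resolutions admits a coorientation restricting to $\eta$ on the four edge-germs. To see this I would go through the two model configurations of Figure~\ref{fig:vertex}: reading the coorientation around a small circle about $v$, being Eulerian says the four crossing signs sum to zero, equivalently the signs are $(+,-,+,-)$ (alternating) or $(+,+,-,-)$ (non-alternating) in cyclic order. A direct inspection of the four normal directions in each case shows that exactly one of the two planar resolutions is $\eta$-compatible (the "alternating'' resolution for an alternating vertex, the "non-alternating'' one otherwise), and that the other resolution is not. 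Performing the compatible resolution inside pairwise disjoint small discs $D_v$ about the vertices yields the desired closed cooriented $1$-submanifold $\Gamma'$, agreeing with $(\Gamma,\eta)$ outside $\bigcup_v D_v$. For $\gamma$ generic one then homotopes $\gamma$ off the discs $D_v$; this only creates cancelling pairs of intersections with $\Gamma$, hence changes neither $\eta(\gamma)$ nor $[\gamma]$, and afterwards $\eta(\gamma)$ is literally the algebraic intersection number $\Gamma'\cdot\gamma=\langle [\eta],[\gamma]\rangle$. In particular $\eta(\gamma)$ depends only on $[\gamma]\in H_1(S;\Z)$, and $[\eta]$ is independent of all resolution choices since it is determined by its pairings with $H_1(S;\Z)$, which is torsion free.

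The one delicate point is the vertex bookkeeping: verifying carefully, in the two model pictures, that exactly one planar resolution is $\eta$-compatible, and tracking the sign so that the coorientation count defining $\eta(\gamma)$ matches the intersection-theoretic sign of $\Gamma'\cdot\gamma$. Everything else is routine. If one prefers to avoid this case analysis, there is an equivalent purely combinatorial argument using the CW decomposition of $S$ dual to $\Gamma$ — this is where the filling hypothesis is used, as the faces of $\Gamma$ being discs makes $\Gamma$ a cellular embedding, so the dual cell structure has one quadrilateral $2$-cell per vertex of $\Gamma$. The coorientation $\eta$ orients each dual $1$-cell, and the cellular $1$-cochain equal to $1$ on every dual edge is a cocycle precisely when $\eta$ is Eulerian, because its coboundary on the quadrilateral dual to $v$ is the alternating sum of the four crossing signs at $v$. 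A generic $\gamma$ pushed into the dual $1$-skeleton (again using that faces are discs) becomes a cellular $1$-cycle homologous to $\gamma$ on which this cochain evaluates to $\eta(\gamma)$, so $[\eta]$ is the class of this cocycle.
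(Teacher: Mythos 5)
The paper does not actually prove this lemma: it is quoted directly from \cite{CossariniDehornoy}. Your resolution-plus-Poincar\'e-duality argument is a correct, self-contained proof, and a legitimate alternative to the most common direct argument (check that $\eta(\gamma)$ is invariant under the elementary moves of a generic homotopy of $\gamma$ --- creation of a cancelling pair on an edge, and pushing $\gamma$ across a vertex, the latter being exactly the move controlled by the Eulerian condition --- then note that $\gamma\mapsto\eta(\gamma)$ is additive under concatenation and so factors through $H_1(S,\Z)$). Two small corrections. First, your claim that \emph{exactly} one planar resolution is $\eta$-compatible is wrong at an alternating vertex: an arc of a resolution joining two adjacent edge-germs carries a coherent coorientation precisely when the two germs have opposite local signs, so with cyclic signs $(+,-,+,-)$ \emph{both} resolutions of Figure~\ref{fig:vertex} are compatible, while at a non-alternating vertex $(+,+,-,-)$ the compatible resolution is indeed unique. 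This does not damage the proof --- you only need existence of a compatible resolution, and you correctly note that $[\eta]$ is pinned down by its pairings with $H_1(S,\Z)$ --- but the assertion ``and that the other resolution is not'' should be deleted. Second, the step where you homotope $\gamma$ off the discs $D_v$ is the one place where an unjustified cancellation claim sneaks in; it is cleaner to shrink the discs instead: a fixed generic $\gamma$ avoids the finite set $\Gamma_0$, so the $D_v$ may be chosen disjoint from $\gamma$, giving $\eta(\gamma)=\Gamma'\cdot\gamma$ on the nose, after which homology invariance is inherited from the intersection pairing. With these adjustments the argument, as well as your dual-CW cocycle variant, is sound.
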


It is known that Birkhoff sections are classified up to isotopy by their homology class (see \cite{Fried82section} or \cite{Schwartzman1957}). Given an Eulerian coorientation~$\eta$, its cohomology is used in \cite{CossariniDehornoy} to classify the Birkhoff surfaces with symmetric boundary~$-\darrow\Gamma$. According to Theorems C and D of \cite{CossariniDehornoy}, the set of relative homology class~$[\Sigma_\eta]$ realizes every relative homology class of transverse surface with boundary~$-\darrow{\Gamma}$. In particular every Birkhoff section bounded by~$-\darrow\Gamma$ is isotopic to one~$\Sigma_\eta$ for some~$\eta$. For two Eulerian coorientations so that~$\Sigma_\eta$ is a Birkhoff section,~$[\eta] = [\nu]$ in cohomology if and only if~$\Sigma_\eta$ and~$\Sigma_\nu$ are isotopic through the geodesic flow~$\phi$. Additionally the set of~$[\eta]$ is a convex polyhedra inside~$H^1(M,\Z)$, and~$\Sigma_\eta$ is a Birkhoff section if and only if~$[\eta]$ lies in the interior of this polyhedra.

The set of Birkhoff sections with fixed symmetric boundary is a polyhedra not completely understood, but we can describe explicitly every points it contains. Indeed given~$\sigma\in H^1(S,\Z)$, there is a procedure that constructs, if it exists,~$\eta\in\coorientation{\Gamma}$ such that~$\sigma=[\eta]$ (see Appendix \ref{appendix: construction}).

\paragraph{Skeleton of~$\Sigma_\eta$}

We use Figure \ref{fig:rectangle} to define a skeleton~$\ehat X$ of~$\ehat\Sigma_\eta$, that will be pushed into a skeleton~$X$ of~$\Sigma_\eta$. Take an edge~$e\in\Gamma_1$. It corresponds to a flat rectangle~$r_e$ in~$\ehat\Sigma_\eta$, that is isometric to~$e\times [-1,1]$. Denote by~$\{v_1,v_2\}=\partial e$ and~$a_1,a_2\in(-1,1)$ the angle between~$e$ and the intersection geodesic on~$v_1$ and~$v_2$. The rectangle~$r_e$ is attached to four other rectangles (counting with multiplicity) on the four segments given by~$v_i\times[-1,a_i]$ and~$v_i\times[a_i,1]$. So we put a vertex in the middle of each of these segments, and we connect them as in Figure \ref{fig:rectangle}. The union for every~$e\in\Gamma_1$ defines a skeleton~$\hat X$ of~$\ehat\Sigma_\eta$, that we push into~$\Sigma_\eta$ to define the skeleton~$X$.

\begin{figure}[ht]
\centering
\begin{center}
\begin{picture}(55,34)(0,0)
\put(0,0){\includegraphics[width=60mm]{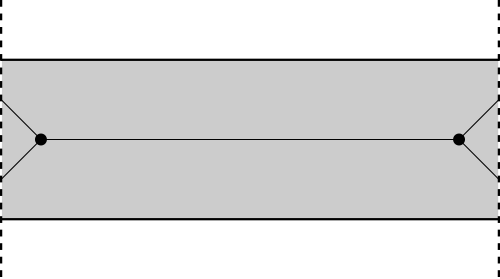}}
\put(29,28){$\overrightarrow{e}$}
\put(29,3){$\overleftarrow{e}$}
\put(0.8,28){$T^1_{v_1}S$}
\put(51.5,28){$T^1_{v_2}S$}
\put(29,18){$\ehat{X}$}
%\graphpaper(0,0)(60,34)
\end{picture}
\end{center}
\caption{Skeleton of~$\ehat{\Sigma_\eta}$ in the rectangle~$r_e$} \label{fig:rectangle}
\end{figure}

Notice that locally around every vertex~$v\in\Gamma_0$, the skeleton~$X$ is homeomorphic to a circle glued once to four edges leaving the circle, independently of the nature of~$v$. To be more precise, we describe~$\pi(X)$ the projection of~$X$ into~$S$. If the smoothing of~$\Sigma_\eta$ is well chosen,~$\pi(X)$ can be obtained from~$\Gamma$ by replacing each alternating vertex by a square, and each non-alternating vertices by a twisted square, as in Figure \ref{fig:representations}.b.

\subsection{Isotopies and immersion of~$\Sigma_\eta$.}
The definition of~$\Sigma_\eta$ makes it a bit hard to compute algebraic and geometric intersections between explicit curves. We will give two other descriptions, obtained by isotopy, of~$\Sigma_\eta$ that will help us.

\paragraph{Isotopy with an immersion. \label{smallIsotopy}}

The isotopy of~$T^1S$ that interests us is the parallel transport that pushes~$(x,u)$ in the direction~$iu$:

$$\left(f_t:(x,u)\in T^1 S\mapsto\exp_{(x,u)}(tiu)\right)_{t\geq 0}$$

Unfortunately, it does not induce an immersion on all of~$\Sigma_\eta$. We will modify this isotopy to make it computable and prove Theorem \ref{introtheoremA}. First we study~$f_t$ in local explicit models, then we will glue these local models. We do it in a flat model.

Let~$S'=\C$ be the flat plane,~$\gamma_1=\R\times\{0\}$ and~$\gamma_2=\{0\}\times\R$. Let~$\Gamma'_1=\{\gamma_1\}$ and~$\Gamma'_2=\{\gamma_1,\gamma_2\}$ represent respectively an edge and a crossing. We define~$g_t:(z,u)\mapsto(z+tiu,u)$ similarly to~$f_t$. Thus it is enough to study~$g_t$ in this model. Let~$\eta$ be an Eulerian coorientation of~$\Gamma'_i$ and construct~$\Sigma'\subset T^1S'$ in  the same way as we construct~$\Sigma_\eta$, for both alternating and non-alternating vertices.

\begin{lemma}\label{localmodel}
Let~$\mathcal{N}\subset\Sigma'$ be a tubular neighbourhood of~$\dd\Sigma'$. Then, for every~$T>0$, there is a small smoothing of~$\Sigma'$ such that for all~$t>T$,~$(\pi\circ g_t)_{|(\Sigma'\setminus\mathcal{N})}$ is an immersion.
\end{lemma}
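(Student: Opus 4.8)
The plan is to work directly in the flat model $S'=\C$ with $\Gamma'_i$ as described, treating the edge case ($\Gamma'_1$) and the crossing case ($\Gamma'_2$) separately, since the lemma only asserts a statement away from a neighbourhood $\mathcal{N}$ of $\partial\Sigma'$ and so the two local pictures can be handled independently and then reconciled on the overlap. First I would give explicit coordinates for $\Sigma'$ away from $\mathcal{N}$: over each rectangle $r_e$ the surface $\ehat\Sigma_\eta$ is, in $T^1S'\cong\C\times S^1$, the graph of the constant direction field pointing along $\eta_e$, so $\pi$ restricted to $\ehat\Sigma_\eta\setminus(\text{fibers of vertices})$ is already an immersion (in fact a local diffeomorphism onto a neighbourhood of the edge). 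The only failure of immersivity is concentrated on the fibers $T^1_vS'$ over the vertices, where after smoothing the surface $\Sigma'$ has a patch on which the direction coordinate $u$ sweeps through an arc and $\pi$ collapses that patch onto (a neighbourhood of) the single point $v$.

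The key computation is then to track what $g_t:(z,u)\mapsto(z+tiu,u)$ does to such a collapsing patch. Parametrise the smoothed patch near a vertex as a small disc $D$ on which $z=z(s)$ stays within distance $O(\mu)$ of $v$ (here $\mu$ is the smoothing scale from the statement) while $u=u(s)$ ranges over an arc $A_v\subset S^1$ of definite length — roughly a half-turn for an alternating vertex, and a full effective turn's worth of variation spread across the twisted square for a non-alternating one, but in both cases an arc of length bounded below independently of $\mu$. After applying $g_t$, the patch becomes $z(s)+tiu(s)$, i.e. the original small disc sheared by the map $u\mapsto tiu$. For $t$ large this shear stretches the $u$-direction by a factor $\sim t$, so $(\pi\circ g_t)(D)$ is (up to an $O(\mu)$ error coming from the original $z(s)$) the image of the arc $A_v$ under $s\mapsto ti\,u(s)$, which is an honest immersed arc thickened by the transverse $u$-width of the smoothing — hence $\pi\circ g_t$ is an immersion there. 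The precise quantifier order in the statement is exactly what makes this work: we are told $T$ first, and then get to choose the smoothing; so I would choose $\mu = \mu(T)$ small enough that on $D$ we have $|z(s)-v|\ll 1/T$, which guarantees that for every $t>T$ the derivative of $\pi\circ g_t$ on $D$ is dominated by the $t\,i\,u'(s)$ term and never vanishes, and moreover that the sheared patch stays inside the intended chart so that no distinct sheets of $\Sigma'$ get pushed onto the same point in a way that would break immersivity. Away from the vertex patches (but still outside $\mathcal{N}$) $\pi$ was already an immersion and $g_t$ is a diffeomorphism of $T^1S'$, so $\pi\circ g_t$ stays an immersion there for all $t$, again uniformly once $\mu$ is fixed.

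The main obstacle is the non-alternating vertex. For an alternating vertex the desingularised square is genuinely a flat square whose direction coordinate runs monotonically through a half-circle, and the shear argument is transparent. For a non-alternating vertex the ``twisted square'' means $\pi(X)$ near $v$ self-overlaps and the direction field is not monotone; one must check that the non-monotonicity of $u(s)$ does not create a critical point of $\pi\circ g_t$, i.e. that $u'(s)$ and, where $u'$ vanishes, the second-order data of the patch still produce a nonzero differential after shearing. I expect this to reduce to the observation that the smoothing of $\ehat\Sigma_\eta$ transverse to the flow is, by construction, immersed as a surface in $T^1S'$ with the fiber direction never tangent to it except along a curve of measure zero, and that $g_t$ for large $t$ rotates the tangent planes so that this exceptional curve no longer projects singularly; making this quantitative, uniformly in $t>T$, is the one place where some care with the explicit local model of Figure~\ref{fig:representations}.c is required. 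Finally I would remark that the constant $\mu$ produced in each of the two local models can be taken to be the same (take the minimum), and that $\mathcal{N}$ absorbs the boundary rectangles where $\pi$ behaves like the edge case restricted to a half-rectangle, so no additional argument is needed there.
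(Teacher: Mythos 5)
Your overall strategy (work in the flat model, let the shear $g_t$ spread the fibre direction into the base, and choose the smoothing after $T$ is given) matches the paper's, and your handling of the quantifiers is essentially the paper's rescaling trick. But the argument rests on a mischaracterization of the surface that leaves the main case unaddressed. Over an edge $e$, the piece $r_e$ of $\ehat{\Sigma_\eta}$ is \emph{not} the graph of the constant direction field $\eta_e$: it is the \emph{vertical} rectangle $\{(x,v):x\in e,\ v\text{ pointing to the side indicated by }\eta_e\}$, i.e.\ over each point of $e$ it contains a whole half-circle of directions. Consequently $\pi|_{\Sigma'}$ collapses every edge rectangle onto the edge and is nowhere an immersion there; the failure of immersivity is not ``concentrated on the fibres over the vertices''. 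Your claim that away from the vertex patches ``$\pi$ was already an immersion and $g_t$ is a diffeomorphism of $T^1S'$, so $\pi\circ g_t$ stays an immersion'' is therefore built on a false premise, and the inference would be invalid even if the premise held: composing with a diffeomorphism of the total space changes $\ker d(\pi\circ g_t)$, so immersivity of $\pi|_{\Sigma'}$ does not transfer. The shear computation you reserve for the vertices is in fact the computation needed over every edge; the paper does it as $d(\pi\circ g_t)=dx-\epsilon t\e^{i\theta}d\theta$, injective exactly for $\theta\in(0,\pi)$, which is also why the neighbourhood $\mathcal{N}$ of $\dd\Sigma'$ must be excised.

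At the crossing, the idea you would need is the one the paper uses and your sketch only gestures at: $\ker d(\pi\circ g_t)$ at $(z,u)$ is spanned by $U_t=(u,\tfrac1t\tfrac{\dd}{\dd\theta})$, which converges as $t\to\infty$ to the generator of the geodesic flow; since $\Sigma'$ is by construction transverse to the flow, it is transverse to $U_t$ for $t$ large on any compact subset away from $\dd\Sigma'$, and this is exactly immersivity of $\pi\circ g_t$ there. Your substitute --- that the fibre direction is ``never tangent to $\Sigma'$ except along a curve of measure zero'' and that $g_t$ ``rotates the tangent planes'' --- is false as stated (the fibre direction $\tfrac{\dd}{\dd\theta}$ is tangent to the vertical rectangles everywhere) and in any case almost-everywhere transversality is not the right notion: one needs it at every point of the compact set. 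Your one-parameter parametrisation $s\mapsto(z(s),u(s))$ of a two-dimensional patch also cannot certify injectivity of the differential on a two-dimensional tangent space. So the vertex case, which you yourself flag as the main obstacle, is not closed.
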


\begin{proof}
We first prove the result for~$T$ arbitrary large and for a fixed smoothing of~$\Sigma'$. We first consider~$\Gamma'_1$. In this case we have~$\Sigma'=\{(x,e^{i\theta}), x\in\R, \epsilon\theta\in [0,\pi]\}$, where~$\epsilon = \pm1$ depends on the coorientation~$\eta$.
We have~$dg_t(x,\e^{i\theta})=dx-\epsilon t\e^{i\theta}d\theta$, which is injective if~$\theta\in(0,\pi)$. Thus~$g_t$ is an immersion on the interior of~$\Sigma'$.

Consider now~$\Gamma'_2$. Fix~$t>0$ and take~$(x,u)\in T^1S$. Then~$\ker(d(\pi\circ g_t)(x,u))$ is directed by~$U_t=(u,\frac{1}{t}\frac{\dd}{\dd\theta})$. But~$\lim\limits_{t \rightarrow +\infty} U_t = (u,0)$ which generates the geodesic flow. Let~$K\subset\Sigma'\setminus\dd\Sigma'$ be a compact sub-manifold. Then for~$t>0$ large enough,~$U_t$ is transverse to~$K$, so~$(\pi\circ g_t)_{|K}$ is an immersion. We can suppose that outside a compact~$K'\subset S'$,~$\Sigma'$ has been smoothed so that~$\Sigma'\setminus T^1_{K'}S'\subset \pi^{-1}(\Gamma'\setminus K')$. Then~$\Sigma'\setminus (T^1_{K'}S'\cup\dd\Sigma')$ is transverse to~$U_t$ for all~$t>0$, as in the first case.

We can combine these two transversal properties. Let~$\mathcal{N}\subset\Sigma'$ be a neighbourhood of~$\dd\Sigma'$. Let~$K'$ be a compact as above, and~$K=(T^1_{K'}S'\cap\Sigma')\setminus\mathcal{N}$. By what precedes, there exists~$T>0$ such that for all~$t>T$,~$(\pi\circ g_t)_{|(\Sigma'\setminus\mathcal{N})}$ is an immersion.

To prove that~$T$ can be made arbitrary small if we change the smoothing of~$\Sigma'$, it is enough to conjugate the previous isotopy with the diffeomorphism~$(z,u)\in T^1\C\mapsto(sz,u)$ for~$s$ a fixed parameter small enough. This diffeomorphism makes the smoothing of~$\Sigma'$ smaller and proves the lemma.
\end{proof}

\begin{proof}[Proof of Theorem \ref{introtheoremA}]
Let~$\mathcal{N}\subset\Sigma_\eta$ be a small tubular neighbourhood of~$\dd\Sigma_\eta$ that does not intersect the skeleton~$X$ of~$\Sigma_\eta$. Consider a flat metric~$\tilde g$ on a small neighbourhood of~$\Gamma\subset S$, such that~$\Gamma$ stays geodesic for~$\tilde g$. Then by compactness, there exists a finite open cover~$\mathcal U$ of a small neighbourhood of~$\Gamma\subset S$. For each~$U\in\mathcal U$, we can find an isometry between~$U$ and an open subset~$V$ of the standard model~$S'$ containing~$0$. Then by using Lemma \ref{localmodel}, we can find an isotopy of~$(\Sigma\subset\mathcal N)\cap T^1U$ to an immersion that stays in any small thickening of~$V$. Since the isotopies are parallel transports of the form~$(x,u)\mapsto(x + \lambda iu,u)$, with the metric~$\tilde g$, we can glue these isotopies for all~$U\in\mathcal U$.

Therefore there exists an isotopy of~$\Sigma_\eta\setminus \mathcal{N}$ whose composition with~$\pi:T^1S\to S$ ends with an immersion. We compose this isotopy with a retraction of~$\Sigma_\eta$ into a small neighbourhood of~$X\subset\Sigma_\eta\setminus\mathcal{N}$. The image of the neighbourhood of both kinds of vertices are obtained explicitly with the local model~$S'$.
\end{proof}

\begin{remarks}
We could have chosen to take~$t<0$. The image of the immersion would be similar. Also the order of the self-intersections of the immersion does not matter since it comes from the projection of a~$S^1$ fiber.
\end{remarks}

\begin{figure}[H]
	\centering
	\begin{center}
	\begin{picture}(65,160)(0,0)
	% \graphpaper(0,0)(100,200)
	\put(0,0){\includegraphics[height=160mm]{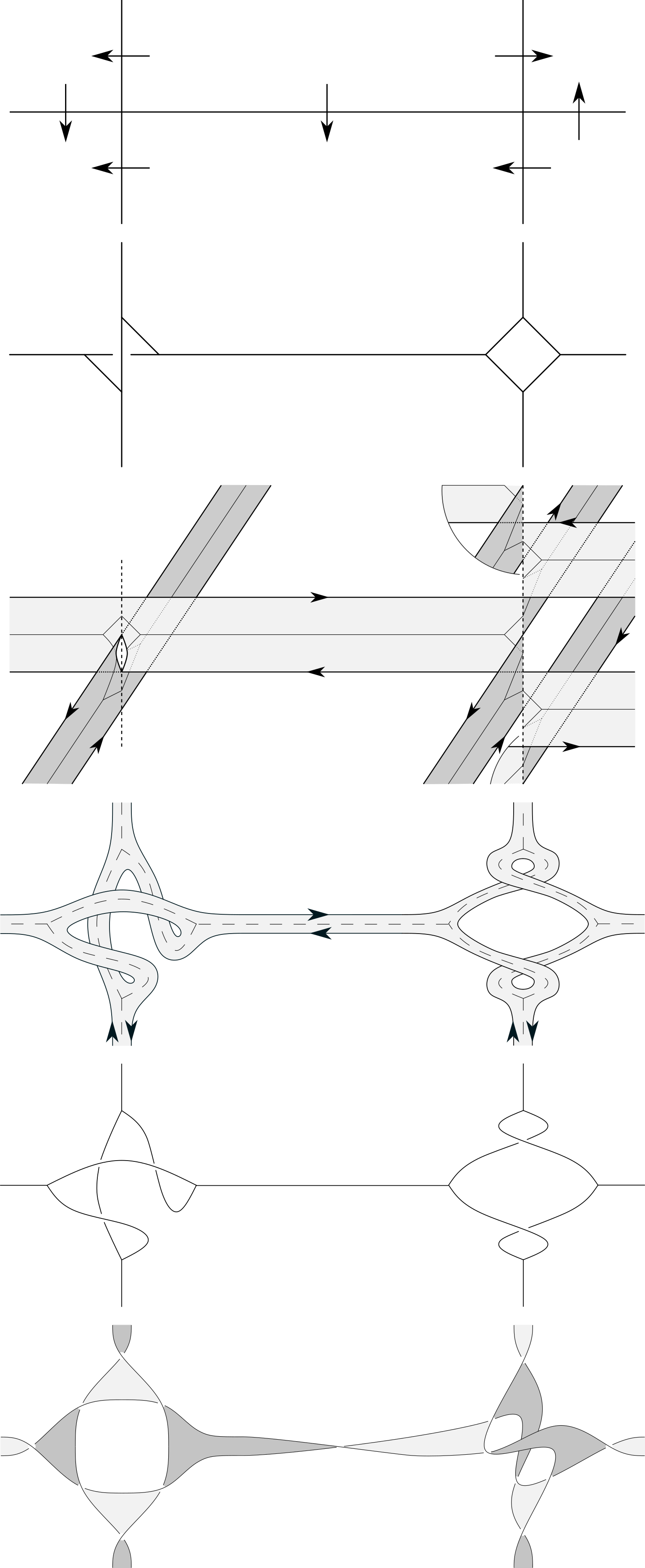}}
	\put(0,154){$a)$}
	\put(0,129){$b)$}
	\put(0,106){$c)$}
	\put(0,73){$d)$}
	\put(0,45){$e)$}
	\put(0,19){$f)$}
	\end{picture}
	\end{center}
	\caption{Local representations:~$a)$ a multi-geodesic~$\Gamma$ and a coorientation~$\eta$ of~$\Gamma$,~$b)$~the projection~$\pi(X)\subset S$ of the skeleton~$X$,~$c)$ a local picture of the surface~$\Sigma_\eta\subset T^1S$ locally identified with~$\R^2\times\R$,~$d)$ the isotope immersion~$\Sigma_\eta\to S$ provided by Theorem \ref{introtheoremA},~$e)$ the ribbon representation of the skeleton of~$\Sigma_\eta$ (see Section \ref{tubularNeighbourhood}) and~$f)$ the twisted representation of~$\Sigma_\eta$.} \label{fig:representations}
	\end{figure}

\begin{definition}\label{def:psi_im}
 The immersion~$\Sigma_\eta\to S$ thus constructed does not depend on the choices made (for~$t>0$) up to isotopy through immersion. It will be called the \textbf{isotope immersion}, and denoted by~$\psi_{im}$.
\end{definition}

\paragraph{The isotopy by twisted immersion.} There exists another representation of~$\Sigma_\eta$ that can be interesting. Take~$X$ the skeleton of~$\Sigma_\eta$ as in Figure \ref{fig:representations}.b. Replace every vertex of~$X$ (which has degree~$3$) by a triangle and replace every edge by a twisted rectangle. Glue them along the triangle corresponding to the ends of the edges. We obtain the image of a "twisted immersion" of~$\Sigma_\eta$ as in Figure \ref{fig:representations}.f. There exists a small isotopy of~$\Sigma_\eta$ in~$T^1S$ that gives this representation when composed with~$\pi$. One can prove this by using either the isotope immersion, or by understanding geometrically how to twist a rectangle~$r_e$ around a vertex~$v$ depending on the orientation of~$\eta$ around~$v$. However we will not use this representation later.

\subsection{The ribbon graph representation of~$\Sigma_\eta$.\label{tubularNeighbourhood}}

In this subsection, we adapt and use the notion of ribbon graph to describe~$\Sigma_\eta$ and its skeleton~$X$ as combinatorial objects. We start by defining the combinatorial tools that interest us. Then we connect them with the isotope immersion of~$\Sigma_\eta$. Eventually, this will make the study of isotopies and diffeomorphisms easier.

\begin{definition}
Let~$S$ be a surface,~$X=(X_0, X_1)$ a graph and~$\phi:X\to S$ a continuous map. We say that~$(X,\phi)$ is a \textbf{ribbon graph} if
\begin{itemize}
\item~$\phi_{|X_0}$ is injective,
\item for all~$e\in X_1$ (as closed segment),~$\phi_{|e}$ is immersed,
\item for all~$v\in X_0$, the tangents to~$\phi_{|e}$, for all~$e\in X_1$ bounding~$v$, are pairwise not positively collinear (and not zero).
\end{itemize}
\end{definition}

\begin{definition}
Let~$(X,\phi)$ be a ribbon graph on a surface~$S$. We call \textbf{induced surface} of~$(X,\phi)$ the thickened immersed surface obtained from the blackboard framing. More precisely, it corresponds to~$(\Sigma_{X,\phi},\iota,\pi)$, where~$\Sigma_{X,\phi}$ is a smooth surface,~$\iota:X\looparrowright \inte(\Sigma_{X,\phi})$ is an embedding and~$\pi:\Sigma_{X,\phi}\hookrightarrow S$ is an immersion, such that~$\Sigma_{X,\phi}$ retracts by deformation to~$\iota(X)$, and~$\pi\circ\iota = \phi$.
\end{definition}

\begin{example}
The isotope immersion defined in Section \ref{smallIsotopy} naturally yields a ribbon graph and its induced surface, as in Figure \ref{fig:representations}.
\end{example}

\begin{definition}
Two ribbon graphs are said to be \textbf{weakly isotopic} if there exists a succession of isotopies of ribbon graphs, of twists, fusions and contractions that goes from one to the other. The twist, fusion and contraction moves are represented in Figure~\ref{TwistMove}.
\end{definition}

\begin{figure}[ht]
\centering
\begin{center}
\begin{picture}(80,38)(0,0)
\put(0,0){\includegraphics[width = 80mm]{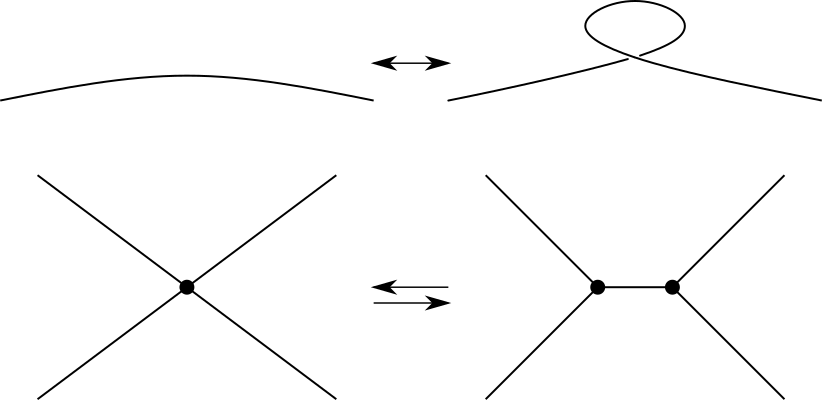}}

\put(36, 35){twist}
\put(30, 13){contraction}
\put(35, 6){fusion}
%\graphpaper(0,0)(80,50)
\end{picture}
\end{center}
\caption{Weak isotopy of ribbon graph.}
\label{TwistMove}
\end{figure}

Notice that during such an isotopy, the order of the edges around a vertex do not change. Examples of weak isopoties are given in Figure \ref{fig:slide}.

\begin{proposition}
Let~$(X,\phi)$ and~$(Y,\psi)$ be two weakly isotopic ribbon graphs. Then their induced surfaces are diffeomorphic. Also this diffeomorphism together with the retractions~$\Sigma_{X,\phi}\to X$ and~$\Sigma_{Y,\psi}\to Y$ induce the same homotopy equivalence~$X\simeq Y$ than the weak isotopy.
\end{proposition}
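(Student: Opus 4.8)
The plan is to reduce the statement to a single elementary move. Both conclusions --- diffeomorphic induced surfaces, and compatibility with the retractions realizing the prescribed homotopy equivalence --- are transitive: given moves $(X,\phi)\to(Z,\chi)$ and $(Z,\chi)\to(Y,\psi)$ treated individually, I would compose the resulting diffeomorphisms and homotopy equivalences, using that the retractions compose, to handle their concatenation. So it is enough to treat (a) an isotopy of ribbon graphs and (b)--(d) a single twist, fusion and contraction, in each case producing an explicit diffeomorphism of induced surfaces intertwining the deformation retractions with the homotopy equivalence the move induces on skeleta.

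For (a) I would take a smooth isotopy $(\phi_s)_{s\in[0,1]}$ through ribbon graphs and note that the ribbon-graph conditions (each edge immersed; at each vertex the incident tangents pairwise not positively collinear and nonzero) are open, so the blackboard-framing thickening $\Sigma_{X,\phi_s}$, the embedding $\iota_s\colon X\hookrightarrow\Sigma_{X,\phi_s}$ and the immersion $\pi_s$ can be chosen to depend smoothly on $s$, e.g.\ via a smooth fiberwise parametrization of a normal band around $\iota_s(X)$. The total space is then a smooth fiber bundle over $[0,1]$, trivial by Ehresmann's theorem; a trivialization gives a diffeomorphism $\Sigma_{X,\phi_0}\cong\Sigma_{X,\phi_1}$ commuting with $\iota_0,\iota_1$, hence with the retractions, and inducing the identity on $X$ --- the homotopy equivalence of an isotopy.

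For (b)--(d) each move is supported in a small disc $D\subset S$ meeting $\pi(X)$ in a standard picture (an edge band for a twist; a neighbourhood of the relevant pair of vertices, resp.\ of an edge together with its two endpoints, for a fusion, resp.\ a contraction; cf.\ Figure~\ref{TwistMove}). Outside $\pi^{-1}(D)$ nothing changes, so I would take the diffeomorphism to be the identity there and only build it over $D$. For a twist the part over $D$ --- the abstract pair (band, core arc) --- is unchanged, the move affecting only $\pi$, so this part of the diffeomorphism is the identity and the induced map of skeleta is the identity. For a contraction the part over $D$ is an abstract disc both before (a band glued to two vertex discs) and after (one vertex disc); I would choose a diffeomorphism of this disc carrying the ``before'' skeleton to the ``after'' skeleton, i.e.\ collapsing the edge stub onto the merged vertex, arranged to be the identity near $\partial D$ so that it patches with the identity outside; conjugating the retractions by it yields precisely the edge-collapse homotopy equivalence. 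A fusion is handled in the same way, merging the two vertex discs.

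The hard part will be (a) together with the patching in (b)--(d): I need to make precise that the thickening and its deformation retraction vary smoothly in families and are unique up to a diffeomorphism rigid enough to commute with the retraction --- in effect a relative uniqueness-of-regular-neighbourhoods statement adapted to the immersed, blackboard-framed setting --- and to check that the locally constructed diffeomorphisms over $D$ are genuinely the identity near $\partial D$. The homotopy-theoretic bookkeeping, by contrast, should be soft: every induced surface deformation-retracts onto its skeleton, so ``inducing the same homotopy equivalence'' only has to be verified up to homotopy, which is immediate from the local models.
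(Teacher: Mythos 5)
The paper itself offers no proof of this proposition -- it is asserted as a standard fact about ribbon graphs and their thickenings -- so there is no argument of the author's to compare yours against. Your sketch is the natural and correct one: reduce by transitivity to a single move, handle isotopies by openness of the ribbon-graph conditions plus an Ehresmann-type triviality of the family of thickenings, and handle twist, fusion and contraction by local models in a disc $D$, with the diffeomorphism equal to the identity outside $\pi^{-1}(D)$. Your treatment of the twist is the key point matching the paper's remark that the diffeomorphism need not come from an isotopy of the immersed images: a curl changes only the immersion $\pi$, not the abstract band, since the blackboard framing rotates by a full $2\pi$ and so returns to itself at the ends of the supporting arc. The one step you rightly flag as needing care -- that the blackboard-framed thickening and its retraction are unique up to a diffeomorphism commuting with the retraction, uniformly in families -- is exactly the uniqueness-of-tubular-neighbourhoods input one would have to write out to make this fully rigorous, but it is standard and does not hide an obstruction.
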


Because of the twist move, this diffeomorphism does not always comes from an isotopy of their immersed image in~$S$.

\paragraph{The combinatorial representation of~$\Sigma_\eta$ \label{comparation}.} Theorem \ref{introtheoremA} gives a representation of~$\Sigma_\eta$ as a ribbon graph. In this paragraph, we use this representation to compare the alternating and non-alternating vertices, that can be found in Figure \ref{fig:representations}.e. This will be useful in Section \ref{section:combiflip} for identifying the surface~$\Sigma_\eta$ one with another.

We will detail here how to compare two Birkhoff sections associated to two coorientations that differ around a specific vertex~$v$ of~$\Gamma$. Figure \ref{fig:slide} describes two isotopies of ribbon graphs that interest us. The idea of the isotopies is the following. Use the image~$\pi(X)\subset S$ of the skeleton~$X$. There is in~$\pi(X)$ a (maybe twisted) square~$\diamond$ associated to the vertex~$v$. Fix~$e$ a non twisted edge of~$\diamond$. In the square~$\diamond$, the edge~$e$ has two adjacent neighboring edges, that we move along~$e$.

\begin{definition}\label{def:slide}
The isotopies described above and shown in Figure \ref{fig:slide} are called \textbf{slide along}~$e$.
\end{definition}

\begin{figure}[ht]
\centering
\begin{center}
\begin{picture}(150,55)(0,0)
\put(0, 30){\includegraphics[width = 150mm]{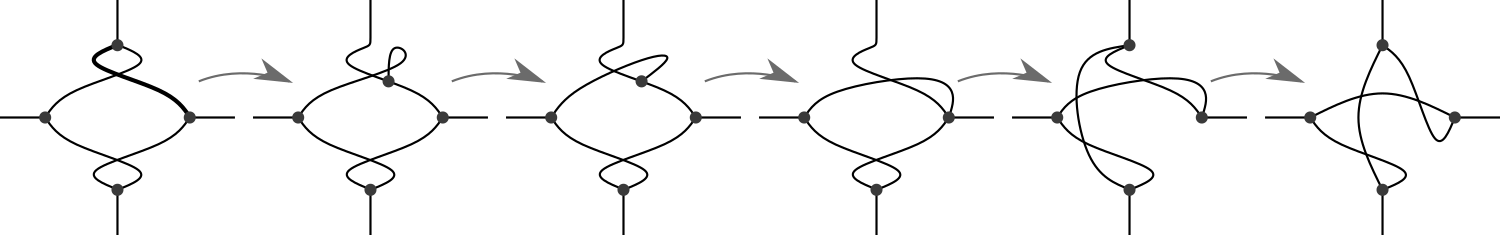}}

\put(16, 46){$e$}
\put(40.5, 50){twist move}

\put(0,0){\includegraphics[width = 150mm]{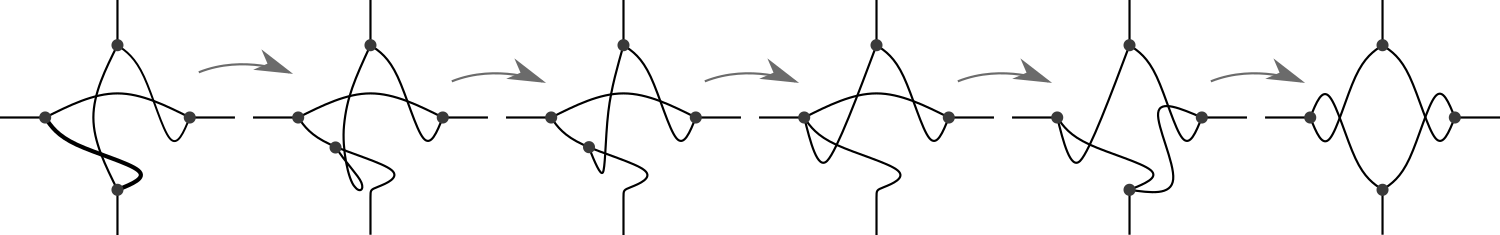}}

\put(4.5, 5){$e$}
\put(40.5, 20){twist move}
%\graphpaper(0,0)(150,24)
\end{picture}
\end{center}
\caption{Isotopy of ribbon graphs (slide along~$e$)}
\label{fig:slide}
\end{figure}

In Figure \ref{fig:slide} are represented respectively a slide along the top right edge and along the bottom left edge. All possible slides are obtained by doing rotation or symmetry of these two slides. Notice that these slides are "pseudo involutions", in the sense that sliding along the same edge twice is isotopic to the identity. We are interested in compositions of slides.

Let~$v$ in~$\Gamma_0$ and denote by~$c_1,c_2,c_3,c_4$ the four quadrants around~$v$, ordered according to an Eulerian coorientation, for example as in Figure \ref{fig:slidesordering}. Denote by~$sl_i$ the slide along~$c_i$, which is well-defined when the skeleton~$X$ admits an edge~$e_i$ along~$c_i$.

\begin{figure}[ht]
\centering
\begin{center}
\begin{picture}(80,45)(0,0)
\put(0,5){\includegraphics[width = 80mm]{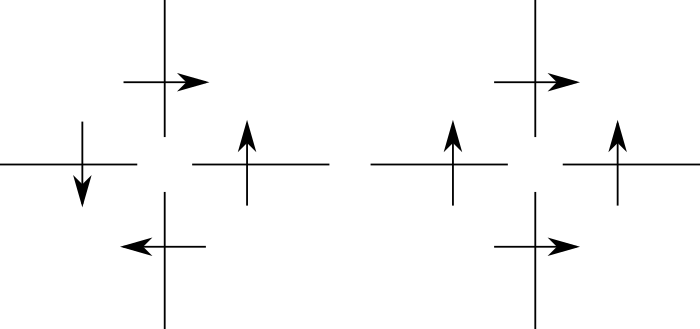}}
\put(10,0){alternating}
\put(47,0){non-alternating}
\put(17.7,23){$v$}
\put(60.2,23){$v$}

\put(27,31.7){$1$}
\put(8.5,31.7){$2$}
\put(27,13.2){$3$}
\put(8.5,13.2){$4$}

\put(69.5,31.7){$1$}
\put(51,31.7){$2$}
\put(69.5,13.2){$3$}
\put(51,13.2){$4$}
%\graphpaper(0,0)(80,45)
\end{picture}
\end{center}
\caption{Ordering the slides around a vertex~$v$.}
\label{fig:slidesordering}
\end{figure}

\begin{lemma}\label{lemma:DehnTwistVertex}
In the above context, the diffeomorphism of~$\Sigma_\eta$ induced by~$sl_4\circ sl_3\circ sl_2\circ sl_1$ is well-defined and isotopic to a negative Dehn twist along the curve~$\gamma_v$, represented in Figure \ref{fig:gammavf}.
\end{lemma}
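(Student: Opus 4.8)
The plan is to work entirely inside the ribbon graph representation of $\Sigma_\eta$ provided by Theorem~\ref{introtheoremA}, since the four slides $sl_1,\dots,sl_4$ are, by Definition~\ref{def:slide}, specified as explicit weak isotopies of ribbon graphs supported near the (possibly twisted) square $\diamond$ associated to $v$. The first step is to check that the composition $sl_4\circ sl_3\circ sl_2\circ sl_1$ is well-defined: each $sl_i$ requires the skeleton to have an edge $e_i$ running along the quadrant $c_i$, and one must verify that after performing $sl_1,\dots,sl_{i-1}$ the relevant edge along $c_i$ is still present (and is a non-twisted edge of the current square, as Definition~\ref{def:slide} demands). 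Because the slides only move the two neighboring edges of a fixed edge $e$ along $e$ — and, by the ``pseudo-involution'' remark, do not destroy edges — this is a finite local bookkeeping check on the two model pictures (alternating and non-alternating) of Figure~\ref{fig:slidesordering}; it should be done once for each type of vertex, using that the diffeomorphism class of $\Sigma_\eta$ is independent of the vertex type (Remark after the construction).

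Next I would compute the composite diffeomorphism directly in the ribbon picture. Each slide, via the Proposition on weakly isotopic ribbon graphs, induces a diffeomorphism of the induced surface $\Sigma_{X,\phi}$ together with a prescribed homotopy equivalence of skeleta; composing the four, we obtain a self-diffeomorphism $h_v$ of $\Sigma_\eta$ that fixes the skeleton $X$ outside a neighborhood of $\diamond$ and is the identity away from an annular neighborhood of $\gamma_v$. Concretely, one tracks a transverse arc (or a small collection of arcs) crossing the annulus around $\gamma_v$ through the four twist moves appearing in Figure~\ref{fig:slide}; each ``twist move'' contributes a controlled shear, and I expect the four of them — taken in the order dictated by the Eulerian cyclic ordering $c_1,c_2,c_3,c_4$ — to accumulate into exactly one full negative twist of the annulus, with the two orientation conventions (the blackboard framing and the coorientation by the flow) pinning down the sign. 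The curve $\gamma_v$ is then read off as the core of this annulus: it is the simple closed curve on $\Sigma_\eta$ encircling the square $\diamond$, i.e. the boundary of a regular neighborhood in $\Sigma_\eta$ of the circle-with-four-prongs that $X$ forms near $v$, as drawn in Figure~\ref{fig:gammavf}.

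The remaining step is to upgrade ``induces the same diffeomorphism on a neighborhood of $\gamma_v$ as $\tau_{\gamma_v}^{-1}$'' to an honest isotopy of diffeomorphisms of $\Sigma_\eta$. Since both $h_v$ and $\tau_{\gamma_v}^{-1}$ are supported in the same annular neighborhood $A$ of $\gamma_v$ and agree up to isotopy there, and since a diffeomorphism of an annulus fixing the boundary is determined up to isotopy rel boundary by its number of Dehn twists, it suffices to verify that $h_v$ restricted to $A$ is isotopic rel $\partial A$ to a single negative twist — which is precisely the count carried out in the previous step — and then extend by the identity.

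The main obstacle I anticipate is the sign and the exact twisting count: making sure that the four slides, in the prescribed Eulerian order, compose to \emph{one} full twist rather than zero or two, and that it is the \emph{negative} twist with respect to the orientation that the geodesic flow's coorientation induces on $\Sigma_\eta$. This requires carefully orienting the model pictures of Figures~\ref{fig:slide} and~\ref{fig:slidesordering} and chasing the framing through each ``twist move''; the non-alternating case, where the square $\diamond$ is itself twisted, needs separate care, though the final answer must coincide with the alternating case because the ambient isotopy type differs but the abstract surface and the curve $\gamma_v$ do not.
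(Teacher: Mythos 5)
Your proposal is correct and follows essentially the same route as the paper: verify well-definedness of the four slides, observe that the composite is supported in the annular neighbourhood of $T^1_vS$ in $\Sigma_\eta$, and determine the twisting number and sign by tracking a transverse arc $\delta$ through the four slide moves (the paper carries out this diagram chase in Figure~\ref{svertexisotopy1}, concluding $f_\star([\delta])=[\delta]\pm[\gamma_v]$). The only cosmetic difference is that you invoke the classification of annulus diffeomorphisms rel boundary where the paper uses the equivalent homological count of intersections with $\gamma_v$.
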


\begin{remark}
The curve~$\gamma_v$ is a skeleton of~$\Sigma_\eta$ when restricted to a small neighbourhood of~$T^1_vS$.
\end{remark}

\begin{proof}
We prove the lemma when~$v$ is an alternating vertex. The other case only needs an adaptation of the diagram we will use. Let~$U\subset T^1S$ be a small tubular neighbourhood of the fiber~$T^1_vS$, so that~$U\cap\Sigma_\eta$ is homeomorphic to an annulus. Let~$\delta\subset\Sigma$ be a curve intersecting the core of~$U\cap\Sigma_\eta$ once, and with ends outside~$U$, as in Figure~$\ref{svertexisotopy1}$. Denote by~$f:\Sigma_\eta\to\Sigma_\eta$ the diffeomorphism induced by the isotopy~$sl_4\circ sl_3\circ sl_2\circ sl_1$.

In Figure \ref{svertexisotopy1}, we give the diagrams of four isotopies of ribbon graphs, and we keep track of~$\delta$ along these isotopies. It proves that the concatenation is well-defined, and that, in homology,~$f_\star([\delta])=[\delta]\pm[\gamma_v]$. Also the isotopy fixes the ribbon graph outside~$U$. So the support of~$f$ is included in an annulus, and~$f$ acts in homology like a Dehn twist. Figure \ref{svertexisotopy1} gives the sign of the Dehn twist. Thus it is isotopic to the negative Dehn twist along~$\gamma_v$.

\begin{figure}[ht]
\centering
\begin{center}
\begin{picture}(90, 59)(0,0)
\put(0,0){\includegraphics[width = 90mm]{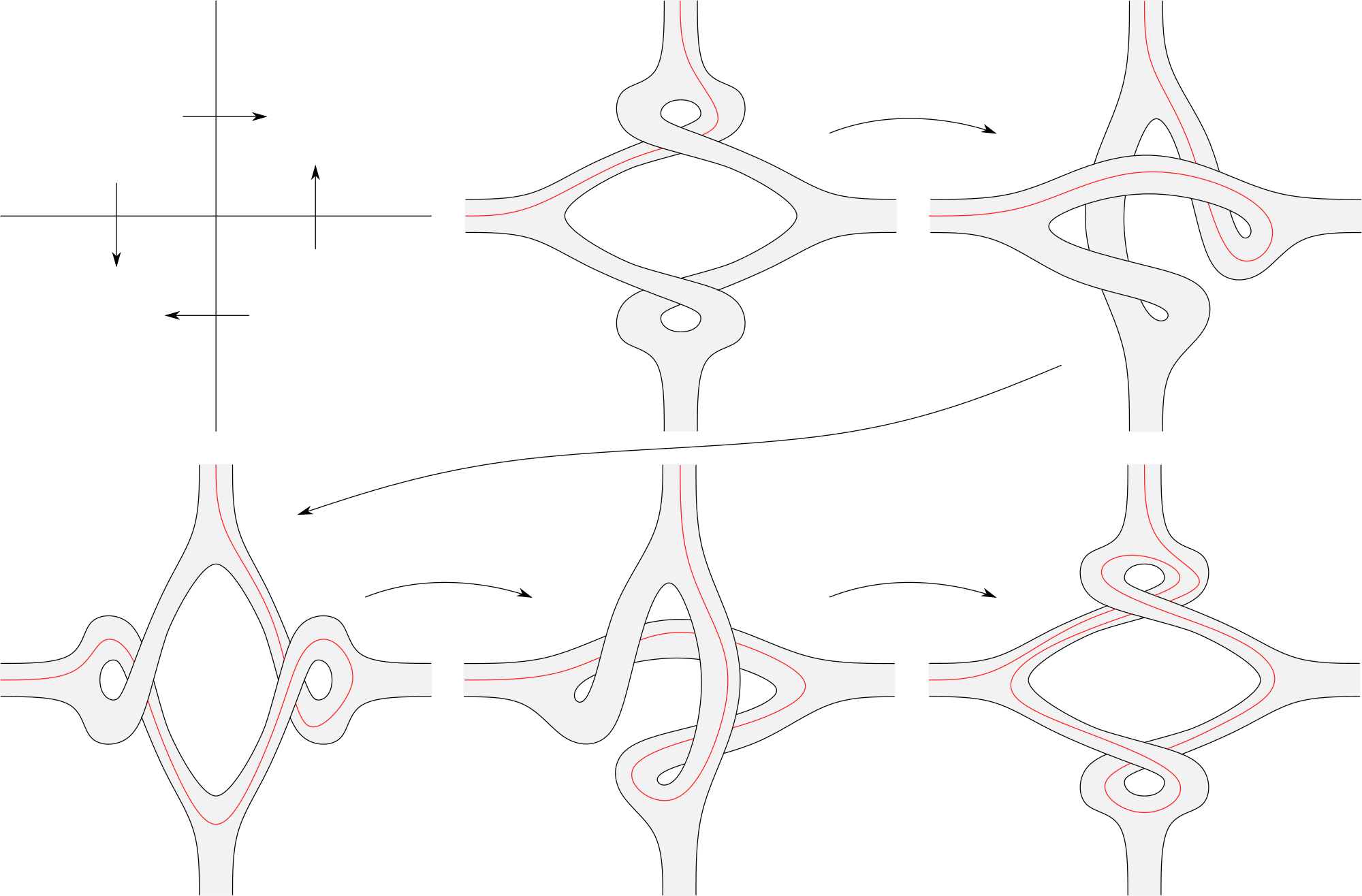}}

\put(52, 54){slide move}
\put(19.7, 50){$1$}
\put(6.5, 37){$2$}
\put(6.5, 50){$3$}
\put(19.7, 37){$4$}
\color{red}
\put(43, 53){$\delta$}
\color{black}
%\graphpaper(0,0)(90,59)
\end{picture}
\end{center}
\caption{Action of four slides around an alternating vertex~$v$ and their traces on~$\delta$.}
\label{svertexisotopy1}
\end{figure}

\end{proof}

\section{Elementary flips and partial return maps\label{section:combiflip}}

The main idea for computing the first-return map is to see it as a composition of partial return maps~$\Sigma_\eta=\Sigma_0\to\Sigma_1\to\hdots\to\Sigma_n=\Sigma_\eta$. In this section, we study the combinatorics and the geometry of the partial return maps, in order to prove Theorem~\ref{introtheoremB}. We also introduce tools needed to formulate Theorem \ref{introtheoremCrestated} precisely.

\subsection{Combinatorial flip transformation}

We introduce in this subsection the main combinatorial tool: the flip. We start by studying~$\Gamma^\star$ the dual graph of~$\Gamma\subset S$. In~$\Gamma^\star$, every face~$f$ of~$S\setminus\Gamma$ (diffeomorphic to~$B^2$) is replaced by a vertex~$f^\star$ inside the face. Every edge of~$e\in\Gamma_1$ between two faces~$f_1$ and~$f_2$ (not necessarily different) is replaced by a transverse edge~$e^\star$ from~$f_1^\star$ to~$f_2^\star$. And every vertex~$v\in\Gamma_0$ is replaced by a face~$v^\star$.

Let~$\eta$ be a coorientation of~$\Gamma$. It naturally induces an orientation on~$\Gamma^\star$, which will also be denoted by~$\eta$. We are interested by geodesics in~$S$ that induce on~$(\Gamma^\star,\eta)$ an oriented cycle. For a geodesic~$\gamma\subset\Gamma$, pushing slightly~$\gamma$, to its left or its right, induces two different cycles in~$\Gamma^\star$, but they are simultaneously oriented or not-oriented for~$\eta$ (for homology reasons). We consider these curves for telling whether~$\gamma$ induces an oriented cycle in~$(\Gamma,\eta)$.

\begin{lemma}\label{acyclic}
Let~$\eta$ in~$\coorientation{\Gamma}$, then:
\begin{itemize}
	\item For any curve~$\gamma\subset S$ inducing an oriented cycle in~$(\Gamma^\star,\eta)$, the geodesic homotopic to~$\gamma$ also induces an oriented cycle in~$(\Gamma^\star,\eta)$.
	\item The surface~$\Sigma_\eta$ is a Birkhoff section if and only if the oriented graph~$(\Gamma^\star,\eta)$ has no oriented cycle. In this case, we say that~$\eta$ is an \textbf{acyclic} coorientation.
	\item If~$\Gamma$ admits an acyclic coorientation, then every edge in~$\Gamma_1$ bounds two different faces of~$\Gamma$.
\end{itemize}
\end{lemma}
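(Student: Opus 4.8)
The plan is to prove the three items in sequence, using the first to simplify the others. For the first item, I would argue homologically: if $\gamma$ is any closed curve in $S$, then the oriented intersection count $\eta(\gamma)$ depends only on $[\gamma]\in H_1(S,\Z)$ by the lemma of \cite{CossariniDehornoy} quoted above, so it equals $\eta(\gamma_0)$ for the geodesic $\gamma_0$ homotopic to $\gamma$. But ``inducing an oriented cycle in $(\Gamma^\star,\eta)$'' is not merely a homological condition, so this alone does not suffice; the right statement is that a curve induces an oriented cycle iff every edge $e^\star$ it crosses is crossed coherently with $\eta$, i.e.\ the curve is positively transverse to $(\Gamma,\eta)$ everywhere. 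I would show that if $\gamma$ is positively transverse to $(\Gamma,\eta)$ and $\gamma_0$ is the geodesic representative, then one can take the geodesic homotopy (straightening $\gamma$ to $\gamma_0$) and observe that, since $\Gamma$ is filling and geodesic, the straightening can be performed so that intersections with $\Gamma$ are only created or destroyed in canceling bigon pairs, each such pair contributing oppositely to $\eta$; hence positivity of every crossing is preserved. Equivalently, a bigon between $\gamma$ and a geodesic of $\Gamma$ would force an inessential crossing, and a filling geodesic $\Gamma$ admits no bigons with a geodesic, so the geodesic representative crosses $\Gamma$ minimally and coherently. Either phrasing gives that $\gamma_0$ still induces an oriented cycle.

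For the second item, I would use the characterization of Birkhoff sections recalled in Section~\ref{subsection:construction}: $\Sigma_\eta$ is a Birkhoff section iff $[\eta]$ lies in the interior of the polyhedron of admissible cohomology classes, and more directly, $\Sigma_\eta$ fails to be a Birkhoff section exactly when some orbit of $\phi$ never meets $\inte(\Sigma_\eta)$, which for this construction means there is an orbit disjoint from $\Sigma_\eta$ staying in the complementary region. Concretely, $T^1S\setminus\Sigma_\eta$ fibers (up to the dynamics) over the complement of $\Gamma$ in a way controlled by the dual graph with its $\eta$-orientation: an orbit avoiding $\Sigma_\eta$ projects to a path in $S\setminus\Gamma$ that must cross edges only against $\eta$, i.e.\ it shadows a path in $(\Gamma^\star,\eta)$; such an orbit can be made recurrent (hence trapped forever) precisely when $(\Gamma^\star,\eta)$ contains an oriented cycle, by item~1 realizing the cycle by a closed geodesic whose lift is a periodic orbit of $\phi$ disjoint from $\Sigma_\eta$. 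Conversely, if $(\Gamma^\star,\eta)$ is acyclic, then topologically sorting the faces gives a global ``height'' function on $S\setminus\Gamma$ strictly decreasing across each edge in the $\eta$-direction, which one lifts to show every orbit must cross $\Sigma_\eta$ within bounded time; combined with transversality of $\inte(\Sigma_\eta)$ to $\phi$ and $\partial\Sigma_\eta=-\darrow\Gamma$ being orbits, this is exactly the definition of a Birkhoff section.

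For the third item, suppose some edge $e\in\Gamma_1$ bounds the same face $f$ on both sides. Then in $\Gamma^\star$ the dual edge $e^\star$ is a loop at $f^\star$, and with the $\eta$-orientation it is an oriented loop — an oriented cycle of length one — so by item~2 the coorientation $\eta$ cannot be acyclic. (One should note $e^\star$ does receive a genuine orientation: $\eta$ coorients $e$, and the two sides of $e$ are the two approaches to $f$, so the loop is oriented, not ``bidirected''.) Contrapositively, if $\Gamma$ admits an acyclic coorientation then no edge can bound a single face twice.

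The main obstacle will be item~1, specifically making precise and correct the claim that the geodesic straightening preserves the ``oriented cycle'' property rather than just the homological intersection number; the subtlety is that an oriented cycle in $(\Gamma^\star,\eta)$ records the cyclic sequence of crossings, not just their algebraic sum, so I must genuinely argue that no new against-$\eta$ crossings appear, which is where the filling hypothesis and the no-bigon property of geodesics do the essential work. The rest — the correspondence between trapped orbits and oriented cycles in item~2, and the loop-edge observation in item~3 — is comparatively routine given the machinery already set up in \cite{CossariniDehornoy} and Section~\ref{subsection:construction}.
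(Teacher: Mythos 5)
Your overall route is the same as the paper's: item~1 via minimal position/no bigons plus the homological invariance of $\eta(\cdot)$, item~2 by translating orbits avoiding $\Sigma_\eta$ into oriented paths in $(\Gamma^\star,\eta)$ (acyclic $\Rightarrow$ bounded return time; cycle $\Rightarrow$ a trapped orbit via item~1), and item~3 as the loop-edge observation. Items~1 and~3 are fine as sketched; in item~1 the clean way to finish is exactly the counting you hint at: an oriented cycle forces $|\eta(\gamma)|=|\Gamma\cap\gamma|$, homotopy invariance of $\eta$ forces $\gamma$ to be in minimal position, the geodesic representative is also in minimal position, hence satisfies the same equality and is therefore coherent at every crossing.

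There is, however, one concrete gap in item~2, in the direction ``oriented cycle $\Rightarrow$ not a Birkhoff section.'' You realize the cycle by a closed geodesic $\gamma$ (via item~1) and claim its reversed lift is a periodic orbit disjoint from $\Sigma_\eta$. This fails when the only geodesic realizing the cycle is a component of $\Gamma$ itself (e.g.\ the cycle runs through the faces bordering one side of a component of $\Gamma$): then $\larrow{\gamma}$ is a boundary orbit of $\Sigma_\eta$, so it is not an orbit missing the section, and you cannot conclude directly. The paper's fix is to observe that a small push-off of $\gamma$ to the appropriate side still induces an oriented cycle, and hence every orbit in the stable leaf of $\larrow{\gamma}$ eventually stops meeting $\Sigma_\eta$; those interior orbits are the ones violating the bounded-return-time condition. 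Relatedly, your item~1 argument should also record this degenerate case (when the geodesic representative lies in $\Gamma$, one works with a slight push-off), since that is precisely the statement item~2 needs. With that case added, your proof matches the paper's.
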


\begin{proof}

Let~$\gamma\subset S$ be a curve inducing an oriented cycle in~$\Gamma^\star$. Denote by~$\tilde\gamma$ the unique geodesic of~$S$ homotopic to~$\gamma$. We will prove that~$\tilde{\gamma}$ induces an oriented cycle in~$\Gamma^\star$ by doing Reidemeister move on~$\gamma$. Suppose that~$\tilde\gamma$ is not a component of~$\Gamma$, then~$\tilde\gamma$ is obtained from~$\gamma$ by doing Reidemeister moves on~$\Gamma\cup \gamma$, shortening~$\gamma$ and not changing~$\Gamma$. 

The curve~$\gamma$ induces an oriented cycle, so~$\eta(\gamma)=\pm|\Gamma\cap \gamma|$. If~$\delta$ is homotopic to~$\gamma$, then~$|\eta(\gamma)|=|\eta(\delta)|\leq |\Gamma\cap \delta|$, so~$\gamma$ minimises~$|\Gamma\cap \gamma|$ in its homotopy class. Hence~$\Gamma\cup \gamma$ has no bigon. Also, up to homotopy preserving~$\Gamma\cap\gamma$,~$\gamma$ can be taken without~$1$-gon. Hence~$\Gamma\cup \gamma$ has no~$1$-gon nor bigon, so no Reidemeister moves I and II can be applied without making~$\gamma$ longer. A Reidemeister III move on~$\Gamma\cap\gamma$, that do not change~$\Gamma$, changes the cycle in~$\Gamma^\star$ induced by~$\gamma$ only if it is along one arc of~$\gamma$ and two intersecting arcs of~$\Gamma$. Also since~$\gamma$ induces an oriented cycle of~$\eta$, such a Reidemeister III move must be in a neighbourhood of a non-alternating vertex, and after the move,~$\gamma$ still induces an oriented cycle on~$S$. Thus the geodesic~$\tilde\gamma$ induces an oriented cycle.

If~$\tilde\gamma$ is a component of~$\Gamma$, we can apply the same idea and prove that a slight push of~$\tilde\gamma$ on its right (or on it left) induces an oriented cycle.

We now prove the equivalence in the second point. Suppose that~$\Sigma_\eta$ is not a Birkhoff section. Then for arbitrary large~$T>0$, there exists~$(x,u)\in T^1S$ such that for~$\forall 0\leq t\leq T$,~$\phi_t(x,u)\not\in\Sigma_\eta$. Take~$T > n d$ where~$n=|\Gamma^\star|$ and~$d$ is the largest diameter of a face~$f\in\Gamma^\star$. Then the geodesic arc~$\phi_{[0,T]}(x,u)$ must travel through at least~$n+1$ faces (counted with multiplicity). Thus it induces in~$\Gamma^\star$ a path~$\gamma^\star$ that admits self-intersections. Note that the orientation of~$\gamma^\star$ in~$\Gamma^\star$ is the opposite to the one provided by~$\eta$. Hence a restriction of~$\gamma^\star$ between two self-intersections, with the opposite orientation, is an oriented cycle in~$(\Gamma^\star, \eta)$.

Suppose that there is an oriented cycle in~$(\Gamma^\star,\eta)$. By the first point, there exists a closed geodesic~$\gamma$ inducing an oriented cycle. If~$\gamma\not\subset\Gamma$, then the orbit~$\larrow{\gamma}$ of the geodesic flow given by the geodesic~$\gamma$ lifted with the opposite direction, satisfies~$\larrow{\gamma}\cap \Sigma_\eta=\emptyset$. Then~$\Sigma_\eta$ is not a Birkhoff section. Suppose that~$\gamma\subset\Gamma$, and~$\larrow{\gamma}\subset\partial\Sigma_\eta$. Then every orbit in the stable leaf of~$\larrow{\gamma}$ stops intersecting~$\Sigma_\eta$ after a large enough time, since any slight push of~$\gamma$ in the appropriate direction induces an oriented cycle of~$\eta$. Hence in both case~$\Sigma_\eta$ is not a Birkhoff section.

For the last statement, it is enough to notice that an edge in~$\Gamma$ bounded twice by the same face is dual to a loop in~$\Gamma^\star$.
\end{proof}

When~$\Sigma_\eta$ is a Birkhoff section,~$(\Gamma^\star, \eta)$ is acyclic and~$\eta$ induces an order on the finit set~$\Gamma^\star$. Thus~$\eta$ must have at least one \textbf{sink} face, that is,~$\eta$ is going inward~$f$ as in Figure \ref{fig:flip}.

\begin{definition}
Let~$\eta$ in~$\coorientation{\Gamma}$ and~$f$ be a sink face. We define~$I_f(\eta)\in\coorientation{\Gamma}$ the coorientation obtained by flipping~$\eta$ along~$\partial f$. We call~$I_f$ an \textbf{elementary flip} along~$f$. We also define recursively~$I_{\{f_1,\hdots,f_k\}}(\eta)=I_{f_k}(I_{(f_1,\hdots,f_{k-1})}(\eta))$, when recursively~$f_i$ is a sink face of~$I_{(f_1,\hdots,f_{i-1})}(\eta)$ for all~$1\leq i\leq k$.
\end{definition}

\begin{figure}[ht]
\centering
\begin{center}
\begin{picture}(100,31)(0,0)
\put(0,0){\includegraphics[width = 100mm]{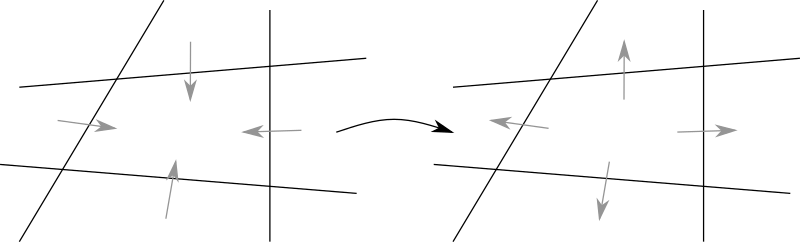}}
\put(21,13){$f$}
\put(76,13){$f$}
\put(47,17){flip}
%\graphpaper(0,0)(100,31)
\end{picture}
\caption{A sink face on the left, and a flip.}
\label{fig:flip}
\end{center}
\end{figure}

If~$\eta$ is Eulerian,~$I_f(\eta)$ remains Eulerian and is cohomologous to~$\eta$.

\paragraph{Representations.}

Around a vertex~$v$, an Eulerian coorientation of~$\Gamma$ gives a local ordering on the~$4$ adjacent faces (so that the coorientation is decreasing). We extend the ordering, by ordering~$v$ relatively to these faces using Figure~\ref{fig:augmented}. That is, if~$v$ is alternating, we set~$v$ bigger than the sink faces and smaller than the source faces. If~$v$ is not alternating, we set~$v$ smaller than the source face and bigger than the three other faces. We call this ordering on~$\Gamma_2\cup\Gamma_0$ the \textbf{coherent order}. These orderings represent the order of the Dehn twists in the product in Theorem \ref{introtheoremCrestated}. 

\begin{remark}
Suppose that~$\Sigma_\eta$ is a Birkhoff section. If one face~$f$ covers two quadrants around a vertex~$v$, then by Lemma \ref{acyclic} it must be two opposite quadrants. Also Lemma \ref{acyclic} prevents~$f$ to be the sink and the source quadrants of a non-alternating vertex~$v$. In this case, the coherent ordering is still well-defined on~$\Gamma_2\cup\Gamma_0$. 

If there exist two faces such that both of them cover two opposite quadrants around~$v$, the coherent ordering is still well-defined on~$\Gamma_2\cup\Gamma_0$ for the same reasons. 
\end{remark}

\begin{figure}[ht]
\centering
\begin{center}
\begin{picture}(80,45)(0,0)
\put(0,5){\includegraphics[width = 80mm]{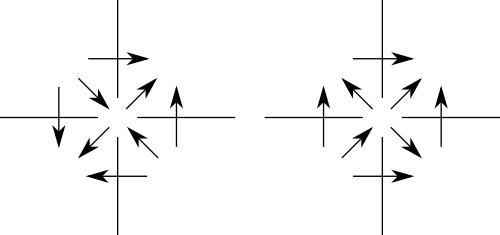}}
\put(10,0){alternating}
\put(47.5,0){non-alternating}
\put(17.7,23){$v$}
\put(60.2,23){$v$}
%\graphpaper(0,0)(80,45)
\end{picture}
\end{center}
\caption{Coherent ordering of a vertex relatively to its adjacent faces.}
\label{fig:augmented}
\end{figure}

\begin{definition}
Let~$\eta$ be an acyclic Eulerian coorientation of~$\Gamma$. We call a \textbf{partial representation} of~$\eta$ a total order on~$\Gamma_2$, which extends the coorientation~$\eta$. We call a \textbf{representation} of~$\eta$ a total order on~$\Gamma_0\cup\Gamma_2$ which extends the coorientation~$\eta$ and the coherent order. Thanks to acyclicity, representations always exist.
\end{definition}

\begin{example}
If~$\Gamma\equiv 0\in H_1(S, \sfrac{\Z}{2\Z})$, then the faces of~$\Gamma$ can be colored in black and white, and we can take the Eulerian coorientation~$\eta$ that goes from black to white. Then a representation can look like: \textit{white faces totally ordered~$<$ vertices totally ordered~$<$ black faces totally ordered}. This choice of representation will leads to the composition of three Dehn multi-twists studied by N.A'Campo and M.Ishikawa.
\end{example}

The point is to use and deform a representation and its coorientation in order to represent the first-return map as a product of elementary diffeomorphisms. We have defined an elementary operation on coorientations, that we will extend to representations.

\begin{definition}
Let~$(\eta,\leq)$ be an acyclic Eulerian coorientation with one partial representation. We define~$I(\eta,\leq)$ to be~$(I_f(\eta),I_f(\leq))$, where~$f$ is the minimal face of~$\leq$ and~$I_f(\leq)$ is obtained from~$\leq$ by setting~$f$ to the maximum. It is called the \textbf{elementary flip} of~$(\eta,\leq)$.
\end{definition}

\subsection{Algorithm for the first-return map.} 

In order to describe the first-return map, we will first describe how it acts on the representations of acyclic Eulerian coorientation. Let~$\eta$ be such a coorientation and~$\leq$ one partial representation. By iterating the flip~$I$, we create a family of~$\#\Gamma_2$ coorientations and partial representations, before looping to~$(\eta,\leq)$. We will translate this geometrically later. For now let us detail a bit more what the coorientations obtained in this process look like.

\begin{lemma}\label{lemma:partialreturns}
Let~$(\eta,\leq)$ be a partial representation. Let~$1\leq k<n$,~$f$ be the~$k^{th}$ face for~$\leq$ and~$(\nu,\preceq)=I^k(\eta,\leq)$. For every~$e\in\Gamma_1$ bounded by two faces~$f_1$ and~$f_2$, we have~$\nu(e)=\eta(e)$ if and only if~$f_1$ and~$f_2$ are simultaneously greater than~$f$ for~$\leq$, that is, either ($f_1>f$ and~$f_2>f$) or ($f_1\leq f$ and~$f_2\leq f$).

In particular~$I^n(\eta,\leq)=(\eta,\leq)$.
\end{lemma}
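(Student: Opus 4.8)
The plan is to prove the claim by induction on $k$, tracking both the coorientation and the order explicitly through the iterated flip. Write $f_1<f_2<\dots<f_n$ for the faces of $\Gamma_2$ ordered by $\leq$. The bookkeeping statement I would establish is: for each $0\le i\le n$ one has $I^i(\eta,\leq)=(\nu_i,\leq_i)$, where $\leq_i$ is the order $f_{i+1}<\dots<f_n<f_1<\dots<f_i$ (the $i$ smallest faces of $\leq$ moved to the top, keeping their relative order), and $\nu_i$ is the coorientation obtained from $\eta$ by reversing $\eta(e)$ for exactly those edges $e\in\Gamma_1$ that are bounded by exactly one of the faces $f_1,\dots,f_i$ (and leaving $\eta(e)$ unchanged on all other edges). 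Note that, because by Lemma \ref{acyclic} every edge bounds two distinct faces, an edge $e$ gets its coorientation reversed an odd number of times in the composition $I_{f_i}\circ\dots\circ I_{f_1}$ exactly when precisely one of its two bounding faces lies in $\{f_1,\dots,f_i\}$; so this $\nu_i$ really is the coorientation underlying $I^i(\eta,\leq)$, provided the iterated flip is well defined.

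For $i=0$ there is nothing to prove. For the inductive step, assume $I^{i-1}(\eta,\leq)=(\nu_{i-1},\leq_{i-1})$ and that this is a genuine partial representation, i.e.\ $\leq_{i-1}$ extends $\nu_{i-1}$. Then the $\leq_{i-1}$-minimal face is $f_i$, and since $\leq_{i-1}$ extends $\nu_{i-1}$ every edge on $\partial f_i$ is cooriented toward $f_i$ by $\nu_{i-1}$, so $f_i$ is a sink and $I$ is defined at $(\nu_{i-1},\leq_{i-1})$. By definition $I(\nu_{i-1},\leq_{i-1})=(I_{f_i}(\nu_{i-1}),I_{f_i}(\leq_{i-1}))$; using Lemma \ref{acyclic} again (each edge of $\partial f_i$ occurs there only once), $I_{f_i}(\nu_{i-1})$ reverses $\nu_{i-1}$ on exactly the edges of $\partial f_i$, which yields $\nu_i$, and $I_{f_i}(\leq_{i-1})$ moves $f_i$ to the top, yielding $\leq_i$. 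It remains to check $\leq_i$ extends $\nu_i$. Let $e$ bound $f_a,f_b$. If $\{f_a,f_b\}\subseteq\{f_1,\dots,f_i\}$ or $\{f_a,f_b\}\cap\{f_1,\dots,f_i\}=\emptyset$, then $e$ has been reversed an even number of times relative to $\eta$ and the relative $\leq_i$-order of $f_a,f_b$ agrees with their $\leq$-order, so compatibility on $e$ is inherited from $(\eta,\leq)$. If exactly one of them, say $f_a$, lies in $\{f_1,\dots,f_i\}$, then $\nu_i(e)=-\eta(e)$; since $f_a$ is among the $i$ smallest $\leq$-faces and $f_b$ is not, we had $f_a<f_b$, so $\eta$ pointed toward $f_a$ and hence $\nu_i$ points toward $f_b$, while in $\leq_i$ the face $f_a$ sits in the moved top block and $f_b$ below it, so $f_b<_i f_a$ — again compatible. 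This completes the induction; in particular every iterate is a valid partial representation, so acyclicity is preserved automatically (the existence of a compatible total order forces it).

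Granting the bookkeeping statement, the lemma follows immediately. With $f=f_k$ we have $\{f_1,\dots,f_k\}=\{g\in\Gamma_2: g\le f\}$, and since any edge $e$ bounds exactly two distinct faces $f_1,f_2$ (Lemma \ref{acyclic}), $\nu=\nu_k$ differs from $\eta$ on $e$ precisely when exactly one of $f_1,f_2$ is $\le f$, i.e.\ precisely when we are not in the case ``both $\le f$ or both $>f$''; equivalently $\nu(e)=\eta(e)$ iff $f_1$ and $f_2$ are simultaneously greater than $f$ or simultaneously $\le f$, as claimed. For $k=n$ we have $\{f_1,\dots,f_n\}=\Gamma_2$, so every edge is reversed exactly twice and $\nu_n=\eta$, while $\leq_n=(f_1<\dots<f_n)=\;\leq$; hence $I^n(\eta,\leq)=(\eta,\leq)$.

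The only real work is the inductive step, and inside it the verification that $\leq_i$ still extends $\nu_i$. This is precisely where the third bullet of Lemma \ref{acyclic} (every edge bounds two distinct faces) is needed: it is what makes ``flipping along $\partial f_i$'' act as a clean per-edge involution and lets the parity count go through. Everything else is bookkeeping.
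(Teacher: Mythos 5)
Your proof is correct and follows the same underlying idea as the paper's (much terser) argument: an edge's coorientation is reversed once for each of its two distinct bounding faces that lies among the $k$ flipped faces, so it changes iff exactly one of them is $\leq f$. Your added induction, verifying at each step that the current minimal face is a sink and that the updated order still extends the updated coorientation, just fills in the well-definedness details the paper leaves implicit.
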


\begin{proof}
The partial representation~$\preceq$ differ from~$\leq$ by moving the~$k$ lower faces on top. So we have~$\nu(e)\neq\eta(e)$ if and only if one of the~$f_i$ is in this subset, and the other is not.
\end{proof}

This lemma will be needed in the next section. The algorithm that consists in applying elementary flips~$I_f$ for successive minimal faces~$f$ will be called by the \textbf{flip algorithm}. This algorithm gives a way to compute the first-return map by computing the~$n$ elementary flips that correspond to the iteration of~$I$.

\subsection{Partial return map\label{subsection:partialReturnMap}}

The partial return maps are the geometric realisation of the combinatorial flip. We define the partial return maps and prove Theorem \ref{introtheoremB} in this subsection.

Let~$\eta$ in~$\coorientation{\Gamma}$ and~$f\in\Gamma^\star_0$ a sink face for~$\eta$. Write~$\Sigma_1=\Sigma_\eta$ and~$\Sigma_2=\Sigma_{I_f(\eta)}$. The elementary flip~$I_f$ acts geometrically by pushing~$\Sigma_1$ along the geodesic flow only around the face~$f$, as in Figure \ref{fig:flipdepth}. Define~$h:\Sigma_1\to\R^+$ such that~$h(x)$ is the smallest~$t\geq 0$ such that~$\phi_t(x,u)$ in~$ \Sigma_2$, and~$r_f:\Sigma_1\to \Sigma_2$ by~$r_f(x,u)= \phi_{h(x)}(x,u)$.

\begin{figure}[ht]
\centering
\begin{center}
\begin{picture}(100, 38)(0,0)
\put(0,0){\includegraphics[width = 100mm]{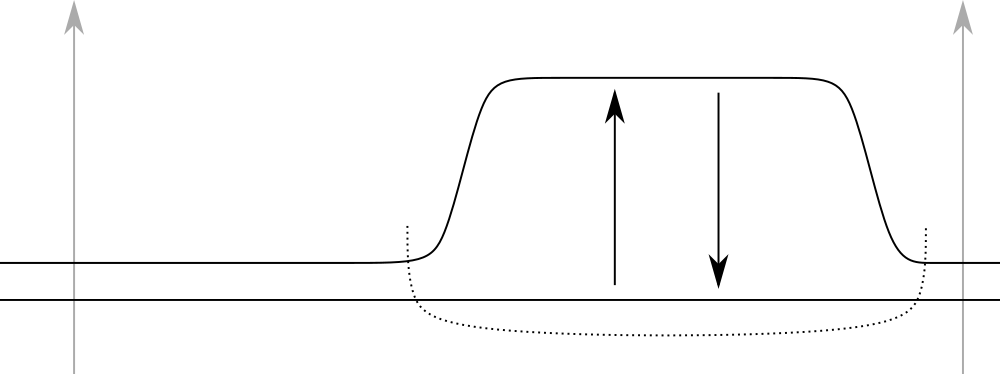}}

\put(57, 16){$r_f$}
\put(73, 16){$T{r_f}$}
\put(65, 0){$f$}
\put(13, 3){$\Sigma_1$}
\put(13, 13){$\Sigma_2$}

\color{gray}
\put(9, 26){$\phi$}
\color{black}

%\graphpaper(0,0)(100,38)
\end{picture}
\end{center}
\caption{Relative positions of~$\Sigma_1 = \Sigma_\eta$ and~$\Sigma_2 = \Sigma_{I_f(\eta)}$ inside~$T^1S$.}
\label{fig:flipdepth}
\end{figure}

\begin{proposition}
There exist two smoothings of~$\Sigma_1$ and~$\Sigma_2$,~$\epsilon>0$ arbitrary small and~$U$ the complement of a small neighbourhood of~$T^1_fS$ such that :
\begin{itemize}
\item~$\Sigma_1$ and~$\Sigma_2$ are disjoint and~$r_f:\Sigma_1\to\Sigma_2$ is well-defined and smooth,
\item~$\phi_{-\epsilon}(\Sigma_2)\cap U = \Sigma_1\cap U$ and~$(\phi_{-\epsilon}\circ r_f)_{|\Sigma_1\cap U} = \Id$.
\end{itemize}
\end{proposition}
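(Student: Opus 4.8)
The statement is essentially a local normal-form claim: near the sink face~$f$, pushing~$\Sigma_1=\Sigma_\eta$ along the flow produces~$\Sigma_2=\Sigma_{I_f(\eta)}$, and away from~$T^1_fS$ nothing happens except a uniform flow shift by a small time~$\epsilon$. The plan is to work entirely in the combinatorial/geometric model of~$\Sigma_\eta$ built in Section~\ref{subsection:construction}, comparing the vertical~$2$-complexes~$\ehat{\Sigma_\eta}$ and~$\ehat{\Sigma_{I_f(\eta)}}$ before smoothing.

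First I would observe that~$\eta$ and~$\eta'=I_f(\eta)$ differ only on the edges of~$\partial f$: each such edge~$e$ has its transverse orientation reversed. Since~$f$ is a sink, all these edges are cooriented \emph{into}~$f$ for~$\eta$ and \emph{out of}~$f$ for~$\eta'$. In the vertical rectangle picture, for an edge~$e\subset\partial f$ the rectangle~$r_e$ for~$\eta$ consists of unit vectors pointing roughly \emph{towards}~$f$, while for~$\eta'$ it consists of vectors pointing \emph{away}; these two rectangles over the same edge~$e$ are related by rotating the vector by~$\pi$ in the fiber~$S^1$, i.e.\ by following the geodesic flow through a half-turn's worth of the fiber — concretely, an orbit starting on a vector pointing into~$f$, once it crosses the geodesic~$e$ and re-enters~$f$ from the interior of~$S\setminus\Gamma$... no: the correct picture is that the geodesic flow carries a vector based on~$e$ pointing into~$f$ across the disc~$f$ until it exits~$f$ through another edge~$e'\subset\partial f$; by the time it exits, it is a vector pointing \emph{out of}~$f$ across~$e'$, hence lies in~$r_{e'}$ for~$\eta'$. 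So the flow restricted to~$T^1_fS$ maps~$\ehat{\Sigma_\eta}\cap T^1_fS$ bijectively onto~$\ehat{\Sigma_{\eta'}}\cap T^1_fS$ after a bounded time; this is exactly what it means for~$I_f$ to be the geometric flip of Figure~\ref{fig:flipdepth}. I would make this precise by parametrizing: for~$x\in\Sigma_1$ near~$T^1_fS$, define~$h(x)$ as in the statement and check it is finite, positive, and smooth (smoothness requires the geodesic arc from~$x$ to hit the transversal~$\Sigma_2$ transversally, which holds because~$\Sigma_2$ is transverse to~$\phi$ by construction).

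Next, for the edges~$e\notin\partial f$ we have~$\eta(e)=\eta'(e)$, so~$r_e$ is literally the same rectangle for both complexes; away from a neighbourhood of~$T^1_fS$ the two complexes~$\ehat{\Sigma_1}$ and~$\ehat{\Sigma_2}$ coincide. To get the second bullet, I would choose~$\epsilon>0$ small enough that flowing~$\Sigma_2$ backwards by~$\epsilon$ moves it off itself but does not move it past any of the fibers~$T^1_vS$ involved in its smoothing; then~$\phi_{-\epsilon}(\Sigma_2)\cap U=\Sigma_1\cap U$ by the previous sentence (the complexes agree on~$U$, and the smoothings can be chosen compatibly since they only differ near~$T^1_fS$), and on~$\Sigma_1\cap U$ the first-return time~$h$ is constant equal to~$\epsilon$, so~$\phi_{-\epsilon}\circ r_f=\phi_{-\epsilon}\circ\phi_\epsilon=\Id$ there. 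For the first bullet, disjointness of~$\Sigma_1$ and~$\Sigma_2$ is arranged by the same small backward push combined with the fact that over~$T^1_fS$ the flow strictly increases the "depth" coordinate transverse to both surfaces (Figure~\ref{fig:flipdepth}); well-definedness and smoothness of~$r_f$ then follow from transversality of~$\Sigma_2$ to~$\phi$ together with the bounded travel time through the disc~$f$.

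The main obstacle I expect is the bookkeeping near the \emph{vertices} of~$\partial f$, where the two smoothings must be reconciled: at an alternating vertex~$v$ on~$\partial f$, flipping~$\eta$ along~$\partial f$ can change~$v$ from alternating to non-alternating or vice versa, so the desingularization of~$\ehat\Sigma$ at~$T^1_vS$ is different for~$\Sigma_1$ and~$\Sigma_2$. One must check that the geometric flow-push (which only moves things inside~$T^1_fS$, not inside the full fiber~$T^1_vS$) does carry the~$\Sigma_1$-smoothing at~$v$ to the~$\Sigma_2$-smoothing at~$v$, using that only one of the two desingularizations is transverse to~$\phi$ (stated in Section~\ref{subsection:construction}) and that the push stays inside the transverse-surface world throughout. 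This is where "there exist two smoothings" in the hypothesis is used: one picks the smoothings of~$\Sigma_1$ and~$\Sigma_2$ simultaneously so that they are flow-related near each vertex of~$\partial f$ and literally equal away from~$T^1_fS$. Once that compatibility is set up, the two bullets are immediate from the constant-return-time observation and the explicit local model.
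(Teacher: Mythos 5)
Your proposal is correct and follows essentially the same route as the paper: compare the unsmoothed $2$-complexes (which coincide away from $T^1_fS$), define $r_f$ via the flow through the disc $f$, reconcile the two desingularizations at the vertices of $\partial f$, and shift by $\phi_\epsilon$ to obtain disjointness and the identity on $U$. The only point the paper treats more explicitly is that near the corners of $f$ the smoothed return time can become negative, which it corrects by pushing $\Sigma_1$ with $\phi_g$ for a small smoothing $g$ of $-\max(0,-h)$; your requirement that the smoothings be ``flow-related near each vertex'' is the same fix stated less concretely.
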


We call~$r_f$ a \textbf{partial return map}. It does not depend on the smoothing of~$\Sigma_1$ and~$\Sigma_2$, so we can define it without precision on the smoothing.

\begin{proof}
We write~$\ehat{\Sigma_i}$ for the~$2$-complex that we smooth for constructing~$\Sigma_i$ (without its boundary). First define~$\ehat{h}:\ehat\Sigma_1\to\R$ and~$\ehat{r}:\ehat{\Sigma_1}\to \ehat{\Sigma_2}$ in the following way. Let~$(x,u)$ in~$\ehat{\Sigma_1}$ and not in~$T^1_v\cap\ehat{\Sigma_1}$ for any non-alternating vertex~$v$. If~$x$ is in~$f$ and~$u$ goes inside~$f$, define~$\ehat{r}(x,u)$ to be the first intersection of~$\ehat{\Sigma_2}$ and of the geodesic starting at~$(x,u)$, and~$\ehat{h}(x,u)$ to be the length of this geodesic arc. Elsewhere set~$\ehat{r}(x,u)=(x,u)$ and~$\ehat{h}(x,u)=0$. 

Let~$v$ be a non-alternating vertex and take~$(x,u)\in T^1_v\cap\ehat{\Sigma_1}$. After the desingularisation of~$\ehat{\Sigma_1}$, two points of~$\ehat{\Sigma_1}$ correspond to~$(x,u)$ and we must define~$\hat{r}$ and~$\hat{h}$ for both points. One of them is adjacent to the two edges of~$f$ adjacent to~$v$, and we define~$\ehat{h}$ and~$\hat{r}$ on it as if it was going inside~$f$. The other is adjacent to the two other edges, and we define~$\ehat{h}$ and~$\hat{r}$ on it as it was outside of~$f$. 

Both functions~$\hat{h}$ and~$\hat{r}$ are well-defined and continuous. We smooth together~$\ehat\Sigma_1$,~$\ehat\Sigma_2$,~$\ehat{h}$ and~$\ehat{r}$ into~$\Sigma_1$,~$\Sigma_2$,~$h$ and~$r$. We use smoothings smaller that~$\epsilon/3$.

On a small neighbourhood of each corner of~$f$,~$h$ may be negative. To make~$h$ positive, take~$g$ a negative smoothing of~$-\max(0,-h)$ and push~$\Sigma_1$ with~$\phi_g$. We suppose that~$|g + \max(0,-h)| < \epsilon$ and that~$g = -\max(0,-h)$ outside the tubular neighbourhood~$B(f,\epsilon)$ of~$f$. Now~$h\circ\phi_{-g}\geq 0$.

Let~$U=T^1S\setminus T^1_{B(f,\epsilon)}S$ be the complement of~$f$ in~$T^1S$. By construction~$\Sigma_1\cap U = \Sigma_2\cap U$ and~$(r_f)_{|\Sigma_1\cap U} = \Id$. We finish by replacing~$\Sigma_2$ by~$\phi_\epsilon(\Sigma_2)$.
\end{proof}

Fix a representation of~$\eta$. The flip algorithm generates a family of cohomologous Birkhoff sections, consecutively disjoint. The partial return maps describe how the flow moves one to the next one.

\setcounter{introtheorem}{1}
\begin{introtheorem}
Let~$\Gamma\subset S$ be a filling geodesic multi-curve of a hyperbolic surface,~$\eta\in\coorientation{\Gamma}$ acyclic,~$\leq$ a partial representation of~$\eta$ and denote the faces by~$f_1\leq\cdots\leq f_n$. Denote by~$\Sigma_0=\Sigma_\eta$ and successively the partial return map~$r_i:\Sigma_{i-1}\to\Sigma_i$ the partial return map along the face~$f_i$. Then~$\Sigma_n=\Sigma_\eta$ and the first-return map on~$\Sigma_\eta$ is the product of the partial return maps~$r_{\Sigma_\eta}=r_n\circ\cdots\circ r_1$.
\end{introtheorem}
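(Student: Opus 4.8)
The plan is to show that iterating the partial return maps $r_i$ exactly recovers the genuine first-return map $r_{\Sigma_\eta}$ of the geodesic flow. The key observation is that each $r_i$ is defined by flowing forward only inside the solid torus $T^1_{f_i}S$ over the face $f_i$, and that these faces partition $S$. So, modulo the smoothing corrections, following the flow from $\Sigma_\eta$ until the first return should decompose, orbit-by-orbit, as: flow inside $f_1$ to reach $\Sigma_1$, then flow inside $f_2$ to reach $\Sigma_2$, etc. The ordering $f_1\leq\dots\leq f_n$ from the partial representation is chosen precisely so that after pushing across $f_1,\dots,f_{i-1}$ one lands on exactly $\Sigma_i=\Sigma_{I^i_{f}(\eta)}$, which is guaranteed by Lemma~\ref{lemma:partialreturns} together with the fact that $\Sigma_{I_f(\eta)}$ is obtained from $\Sigma_\eta$ by pushing along the flow only around $f$ (Figure~\ref{fig:flipdepth}).

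First I would set up the geometric picture: fix $x=(p,u)\in\Sigma_\eta$ in the interior, let $q_0<q_1<\dots$ be the successive times at which the geodesic $\phi_t(x)$ crosses from one face of $S\setminus\Gamma$ into the next (equivalently, crosses $\Gamma$), and track which face the orbit is traversing on each subinterval. The surface $\Sigma_\eta$ sits ``above'' the multi-curve $\Gamma$ in the fibered sense: away from the fibers over vertices it is essentially $\bigcup_e r_e$, a union of vertical rectangles over the edges. So the first time the forward orbit of $x$ meets $\Sigma_\eta$ again is the first time it crosses an edge of $\Gamma$ ``from the correct side'', i.e.\ consistently with the coorientation. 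The content of the theorem is that sweeping this edge-crossing time out as a composition of $n$ face-by-face pushes, in the coherent order, is legitimate.

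The second step is the inductive core. Assuming the orbit currently lies on $\Sigma_{i-1}=\Sigma_{I^{i-1}_f(\eta)}$, apply the partial return map $r_i$ along $f_i$. By the Proposition on partial return maps, $\Sigma_{i-1}$ and $\Sigma_i$ agree outside $T^1_{B(f_i,\epsilon)}S$ and $r_i$ is the identity there (after the $\phi_{-\epsilon}$ correction), while inside the solid torus over $f_i$ it flows each point forward to its first intersection with $\Sigma_i=\Sigma_{I_{f_i}(I^{i-1}_f(\eta))}$. Thus $r_i$ is a genuine (partial) flow map: $r_i(y)=\phi_{h_i(y)}(y)$ with $h_i\geq 0$, and the composition $r_n\circ\dots\circ r_1$ sends $x$ to $\phi_{H(x)}(x)$ where $H=\sum_i h_i\circ(r_{i-1}\circ\dots\circ r_1)\geq 0$. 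It remains to check two things: (a) $H(x)$ is positive, which follows because the orbit of $x$ must cross at least one edge of $\Gamma$ in an admissible way before returning to $\Sigma_\eta$ (acyclicity of $\eta$, Lemma~\ref{acyclic}, guarantees bounded return time so this crossing happens), and that crossing happens inside exactly one $T^1_{f_i}S$; and (b) $H(x)$ equals the first-return time, i.e.\ the orbit does not hit $\Sigma_\eta$ strictly between time $0$ and time $H(x)$. For (b), the point is that between $\Sigma_{i-1}$ and $\Sigma_i$ the only difference is localized over $f_i$, and the pieces of $\Sigma_\eta$ that the orbit might spuriously cross in $(q_{i-1},q_i)$ are exactly the pieces that get pushed forward past the orbit when we replace $\eta$ by $I_{f_i}(\eta)$ — so they are, by construction, not crossed ``transversally from the right side'' until the genuine return. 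This is the crux: one must verify that the intermediate surfaces $\Sigma_1,\dots,\Sigma_{n-1}$ are arranged along the flow so that the forward orbit of $x$ passes through each $\Sigma_i$ exactly once and in the right order, which is Lemma~\ref{lemma:partialreturns} reinterpreted geometrically.

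The final step is bookkeeping: after the $n$-th push one has $\Sigma_n=\Sigma_{I^n_f(\eta)}=\Sigma_\eta$ by the last line of Lemma~\ref{lemma:partialreturns}, and the accumulated correction isotopies $\phi_{-\epsilon}$ and the $\phi_g$-adjustments from the Proposition are isotopies along the flow supported near $\partial\Sigma$ and the face-fibers, hence do not change the induced map on $\inte(\Sigma_\eta)$ up to the canonical isotopy; so $r_n\circ\dots\circ r_1$ and $r_{\Sigma_\eta}$ agree. I expect the main obstacle to be step (b) — ruling out spurious intermediate intersections with $\Sigma_\eta$ and showing the orbit meets the chain $\Sigma_1,\dots,\Sigma_n$ monotonically. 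The cleanest way to handle this is probably to argue at the level of the $2$-complexes $\ehat\Sigma_i$ before smoothing, where each $\ehat\Sigma_i$ is literally a union of vertical rectangles over edges with coorientation $I^i_f(\eta)$, so ``crossing from the correct side'' becomes a purely combinatorial condition on $\eta$ that Lemma~\ref{lemma:partialreturns} controls exactly, and then invoke the $\mathcal C^1$-closeness of the smoothings to transfer the conclusion.
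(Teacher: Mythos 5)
Your overall strategy is the same as the paper's: write $r_n\circ\cdots\circ r_1(x,u)=\phi_{H(x)}(x,u)$ with $H\geq 0$ by restricting to orbits that return before meeting the vertex fibers, check $H(x)>0$, and then check that $H(x)$ is the \emph{first} return time, reducing the last point to the combinatorics of Lemma~\ref{lemma:partialreturns} on the unsmoothed complexes. The gap is in your step (b), which you rightly call the crux but for which the mechanism you state is not the one that works. It is not the case that the pieces of $\Sigma_\eta$ the orbit might spuriously cross ``get pushed forward past the orbit'': all the surfaces $\Sigma_i$ live over $\Gamma$, so in the interior of a face there is nothing to cross, and at the exit edge of the current face the orbit genuinely can --- and at the true return time does --- land on $\Sigma_0$. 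The question is never whether a piece was pushed past, but whether the coorientations $\eta_0$ and $\eta_k$ agree on the exit edge. What the paper extracts from Lemma~\ref{lemma:partialreturns} is a dichotomy at each edge crossing: if the orbit exits the face $f_k$ through an edge $e$ into a face $f_p$, then the exit point lies on $\Sigma_0$ if and only if $\eta_k(e)=\eta_0(e)$, i.e.\ if and only if both faces adjacent to $e$ are $\leq f_k$; in that case the point is directed into a face of index $\leq k$, hence lies outside the support of every later $r_l$, the composition stabilizes there, and this stabilization point is exactly the first return. In the complementary case $f_p>f_k$, so the next active partial return map has strictly larger index and the process advances monotonically through the faces actually traversed by the orbit.

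Without this dichotomy you get neither the monotonicity you want (your ``passes through each $\Sigma_i$ exactly once and in the right order'' is also literally false: on a given orbit most $r_i$ act as the identity, so the orbit meets consecutive $\Sigma_i$'s at the same point, and the composition is eventually constant once it first hits $\Sigma_0$) nor the identification of $H(x)$ with the first return time. Citing Lemma~\ref{lemma:partialreturns} is the right instinct, but the ``geometric reinterpretation'' has to be spelled out as above --- that is where the entire content of the proof sits.
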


Equivalently, the family of cohomologous Birkhoff sections are pairwise disjoint.

\begin{proof}[Proof of Theorem \ref{introtheoremB}]
We will prove that~$r=r_{\Sigma_\eta}= r_n\circ\cdots\circ r_1$ on a dense subset of~$\Sigma_0$.  Let~$(x,u)$ be in~$\inte(\Sigma_\eta)$ such that the geodesic starting at~$(x,u)$ intersects~$\Sigma_\eta$ again before intersecting~$T^1_{\Gamma_0}S$. This represents a dense subset of~$\Sigma_\eta$. We can suppose that the smoothings have been done away from the short geodesic starting at~$(x,u)$ and ending on~$\Sigma_0$ when it first intersects it. So for~$U$ a small neighbourhood of the geodesic from~$(x,u)$ to~$r(x,u)$, we have~$\Sigma_i\cap U=\ehat\Sigma_i\cap U$. Denote by~$f$ the face~$f_j$ at which~$u$ is going inside. By definition of the partial return maps, we have~$r_k\circ\hdots\circ r_1(x,u)=\phi_{t_h}(x,u)\in\Sigma_k$ for some~$t\geq 0$. But~$t_j>0$ since~$(x,u)\in T^1_fS$ and going inside~$f$. Also~$(t_k)_k$ is increasing, so~$t=t_n>0$. It remains to prove that~$t$ is the minimal~$s>0$ that satisfies~$\phi_s(x,u)\in\Sigma_n$.

The main idea is that~$r_k\circ\hdots\circ r_1(x,u)$ remains constant in~$k$, once it is in~$\Sigma_n$ for one~$k\geq j$. Therefore we would have~$r_n\circ\hdots\circ r_1(x,u)=r_k\circ\hdots\circ r_1(x,u)$ for the minimal~$k\geq j$ that satisfies~$r_k\circ\hdots\circ r_1(x,u)\in\Sigma_n$, and~$t>0$ would be the minimal one.

To prove this, let~$j\leq k \leq n$ and suppose~$r_k\circ\hdots\circ r_1(x,u)=(y,v)\in\Sigma_0$. Denote by~$f_p$ the face in which~$v$ is pointing,~$f_q$ the face in which~$v$ is going out, and~$e\in\Gamma_1$ the edge containing~$y$. We claim that~$q\leq k$, because since~$k\geq j$,~$(y,v)$ is going out of the last face that affected the product~$r_k\circ\hdots\circ r_1(x,u)$. Since~$(y,u)\in\Sigma_k\cap\Sigma_0$, we have~$\eta_k(e)=\eta_0(e)$. Then Lemma \ref{lemma:partialreturns} implies that~$p\leq k$. Thus for all~$k< l\leq n$,~$l>p$ so~$(y,v)\not\in\supp(r_l)$, and by induction~$r_l\circ\hdots\circ r_1(x,u)=(y,v)$.
\end{proof}

\section{Explicit first-return map\label{section:first_return_map}}

Let~$r:\Sigma_\eta\to\Sigma_\eta$ the first return map for the geodesic flow. In Section \ref{section:combiflip}, we have decomposed the first return map as a product of partial return maps. In this section, we first compare these partial return maps to negative Dehn twists, along prescribed curves. Then we state and prove Theorem~\ref{introtheoremCrestated}. We finish by comparing several decompositions of first return maps in Dehn twists, and prove Corollary \ref{introcorollaryD}.

\subsection{Explicit computation of partial return maps.}

Let~$f$ be a sink face of~$\eta\in\coorientation{\Gamma}$ and denote~$\Sigma_1=\Sigma_\eta$ and~$\Sigma_2=\Sigma_{I_f(\eta)}$. We will compare the partial return map~$r:\Sigma_1\to\Sigma_2$ to a negative Dehn twist, but~$r$ is not an endomorphism. We need to correct it with a simple diffeomorphism~$\cor:\Sigma_2\to\Sigma_1$ so that~$\Sigma_1\xrightarrow{r}\Sigma_2\xrightarrow{\cor}\Sigma_1$ can be expressed as a Dehn twist. In order to find~$\cor$, we use the ribbon representation of~$\Sigma_i$ and the slides from Definition \ref{def:slide}.

To simplify the computation of~$\cor\circ r$, we need to precise which slides we use. Let~$\{c_1,\hdots,c_k\}$ be the set of corners of~$f$. If~$f$ has double corners, we consider them twice. For~$1\leq i\leq k$, the ribbon graph of~$\Sigma_2$ around~$c_i$ as an edge corresponding to the vector based on~$c_i$ and going inside~$f$. We denote by~$e_i$ this edge, as in Figure \ref{fig:correctioncorner}.

\begin{figure}[ht]
\centering
\begin{center}
\begin{picture}(70, 56)(0,0)
\put(0,0){\includegraphics[width = 70mm]{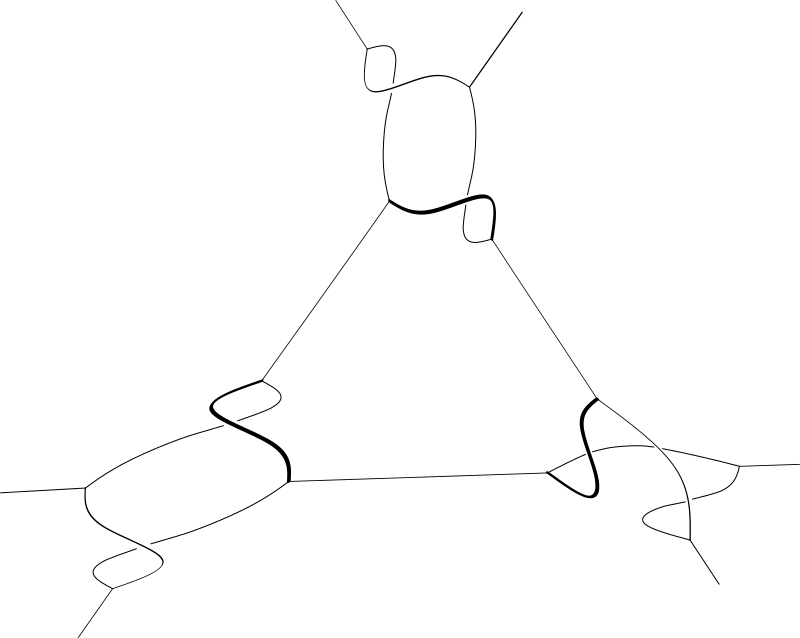}}

\put(26, 17){$e_1$}
\put(46, 18){$e_2$}
\put(36.5, 33){$e_3$}
\put(36, 23.5){$f$}
%\graphpaper(0,0)(70,56)
\end{picture}
\end{center}
\caption{Edges used for the slide correction.}
\label{fig:correctioncorner}
\end{figure}

\begin{definition}
Let~$r:\Sigma_1\to\Sigma_2$ be a partial return map around~$f$. We define~$\cor:\Sigma_2\to\Sigma_1$ the composition of slides along every~$e_i$ for~$1\leq i\leq k$. We call it the \textbf{slide correction} of~$r$.
\end{definition}

The diffeomorphism~$\cor$ is well-defined. Indeed~$f$ is a sink face so the slides are well-defined, and the slides on different corners can be done independently in a commutative way. The diffeomorphism~$\cor\circ r$ will be compared to the Dehn twist along~$\gamma_f$, for the curve~$\gamma_f$ represented in Figure \ref{fig:curves}. This curve does one turn around~$f$, and follows the edge~$e_i$ for each corner~$c_i$ of~$f$.

\begin{proposition}\label{Dehntwistface}
Let~$\eta$ and~$\nu$ be two Eulerian coorientations that differ only by an elementary flip along a sink face~$f$. Let~$r:\Sigma_\eta\to\Sigma_\nu$ be the partial return map and~$\cor_r:\Sigma_\nu\to\Sigma_\eta$ the corresponding slide correction. Then~$\cor_r\circ r$ is isotopic to the negative Dehn twist along~$\gamma_f$.
\end{proposition}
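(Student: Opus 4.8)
The plan is to reduce the proposition to a purely local computation on the ribbon graph representation of $\Sigma_\eta$, supported in a small neighbourhood of $T^1_fS$, and then identify the resulting diffeomorphism with a negative Dehn twist along $\gamma_f$ by tracking a single transverse arc. First I would use the second bullet of the Proposition preceding this statement: after choosing the good smoothings, $r:\Sigma_\eta\to\Sigma_\nu$ restricted to the complement $U$ of a small tubular neighbourhood of $T^1_fS$ is (up to the harmless flow-translation $\phi_{-\epsilon}$) the identity, and $\Sigma_\eta\cap U=\Sigma_\nu\cap U$. On the other hand each slide composing $\cor_r$ is, by construction, supported near one corner $c_i$ of $f$ and along the edge $e_i$ pointing into $f$; collecting all corners, $\cor_r$ is supported in a neighbourhood $V$ of $\pi^{-1}(f)\cap\Sigma_\nu$ (a neighbourhood of the boundary curve of the ``$f$-annulus''). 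So $\cor_r\circ r$ is a diffeomorphism of $\Sigma_\eta$ supported in a neighbourhood of an annulus $A_f$ whose core is $\gamma_f$ — the curve that goes once around $f$ and dips out along each $e_i$. A diffeomorphism of a surface supported in an annulus is isotopic to a power of the Dehn twist along the core, so it remains only to compute that power.

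To pin down the power I would keep track of a single arc $\delta\subset\Sigma_\eta$ crossing $\gamma_f$ exactly once with endpoints outside $A_f$, exactly as in the proof of Lemma \ref{lemma:DehnTwistVertex}. Concretely: $r$ pushes the piece of $\delta$ lying over $f$ forward along the geodesic flow through the fibre $T^1_fS$ until it lands on $\Sigma_\nu$; in the ribbon picture this is the elementary flip $I_f$, which changes the coorientation around $\partial f$ and correspondingly re-routes the skeleton near $f$. Then the slide correction $\cor_r$ slides the neighbouring edges back along each $e_i$, returning the ribbon graph to that of $\Sigma_\eta$. Drawing the analogue of Figure \ref{svertexisotopy1} for a face instead of a vertex, one reads off $ (\cor_r\circ r)_\star[\delta]=[\delta]+[\gamma_f]$ or $[\delta]-[\gamma_f]$, and the sign computation (which orientation the flow-push plus the slides impart to the annulus) shows it is the negative one. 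Since the map is annulus-supported and acts on $H_1$ as $\pm$ one Dehn twist, it is isotopic to $\tau_{\gamma_f}^{\mp 1}$; the sign is negative, giving $\cor_r\circ r\simeq\tau_{\gamma_f}^{-1}$.

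There is one point that needs care: when $f$ has a double corner (a face meeting the same vertex in two opposite quadrants, which Lemma \ref{acyclic} permits) or shares edges with itself — though the third bullet of Lemma \ref{acyclic} rules out the latter when $\eta$ is acyclic — the corners $c_i$ must be counted with multiplicity and the two slides at such a vertex interact with the two vertex-slides differently. I would handle this exactly as flagged before Figure \ref{fig:augmented}: since acyclicity forbids $f$ from being both the sink and the source quadrant at a non-alternating vertex, the two corner-edges $e_i$ at a double corner are genuinely distinct and the corresponding slides commute, so the argument above goes through verbatim with $k$ counting corners with multiplicity and $\gamma_f$ following $e_i$ once for each.

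\textbf{Main obstacle.} The genuinely delicate step is the sign: verifying that the composite of the forward flow-push $r$ with the backward slides $\cor_r$ produces a \emph{negative} rather than positive Dehn twist. This requires fixing, once and for all, compatible orientation conventions — the coorientation of $\Sigma_\eta$ by the flow, the induced orientation on $\Sigma_\eta$, the orientation of the annulus $A_f$, and the direction of each slide — and then carefully tracing $\delta$ through the four-or-more ribbon-graph moves. Everything else (localization of supports, the annulus-support criterion for being a Dehn twist power, the homological computation $[\delta]\mapsto[\delta]\pm[\gamma_f]$) is routine given Lemma \ref{lemma:DehnTwistVertex} and the preceding Proposition; the bookkeeping of orientations in the presence of the flow is where the real work lies.
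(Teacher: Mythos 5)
Your proposal is correct and follows essentially the same route as the paper: localize the support of $\cor_r\circ r$ to an annulus retracting onto $\gamma_f$ (using that $r$ is the identity away from $T^1_fS$ and that the slides are supported near the corner edges $e_i$), conclude it is a power of the Dehn twist along the core, and determine the power as $\mp 1$ by tracking an arc $\delta$ crossing the annulus once, with the double-corner case treated as a separate adaptation. The paper's only notable specifics beyond yours are its concrete choice of $\delta$ built from a fibre arc $T^1_xS\cap\Sigma_\eta$ and the intersection count against a second fibre arc $\gamma_y$ to read off the multiplicity, plus the suggestion of passing to a cover where $f$ embeds to handle double corners.
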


\begin{proof}
We start with an additional assumption on~$f$: we suppose that~$f$ does not admit double corners as an immersed polygon. That is, we suppose that~$f$ is an embedded polygon. First we see that there is an annulus containing the support of~$\cor_r\circ r$. Denote by~$U$ the union of the complement of a small neighbourhood of~$f$ and of the opposite sides of~$e_i$ for every corner~$c_i$ of~$f$ (the opposite side in the ribbon graph of~$\Sigma_\eta$ around a corner~$c_i$). Denote~$V=\Sigma_1\setminus U$. We can do this choice so that~$V$ is homeomorphic to an annulus that retracts on~$\gamma_f$, as in Figure \ref{partialreturn3}.

The remark comes from the fact that both~$\cor_r$ and~$r$ do not act on~$U$. We mean by "not act" that the immersions~$\pi\circ \psi_{im}$ and~$\pi\circ \psi_{im}\circ \cor_r\circ r$ are equal on~$U$. Thus they lift to the equality~$(\cor_r\circ r)_{|U}=\Id_{U}$ and~$\supp(\cor_r\circ r)\subset V$. So~$\cor_r\circ r$ is isotopic to a multiple of the negative Dehn twist along~$\gamma_f$.

In order to understand which multiple it is, we use an arc~$\delta$ transversal to~$V$, and we compare~$\delta$ to~$\cor_r\circ r\circ\delta$. Let~$x,y\in\dd f$ be two points that are not corners of~$f$. Suppose that the smoothings of~$\Sigma_1$ and~$\Sigma_2$ have been done away from~$x$ and~$y$, so that~$\gamma_x=T^1_xS\cap\Sigma_i$ and~$\gamma_y=T^1_y S\cap\Sigma_i$ are two arcs that do not depend on~$i$. Once~$\delta$ will be defined, since~$\gamma_y$ intersects the core of~$V$ only once, the multiplicity of the Dehn twist is equal to the algebraic intersection of~$[r(\delta)-\delta].[\gamma_y]$.

\begin{figure}[ht]
\centering
\begin{center}
\begin{picture}(120, 96)(0,0)
\put(0,0){\includegraphics[width = 120mm]{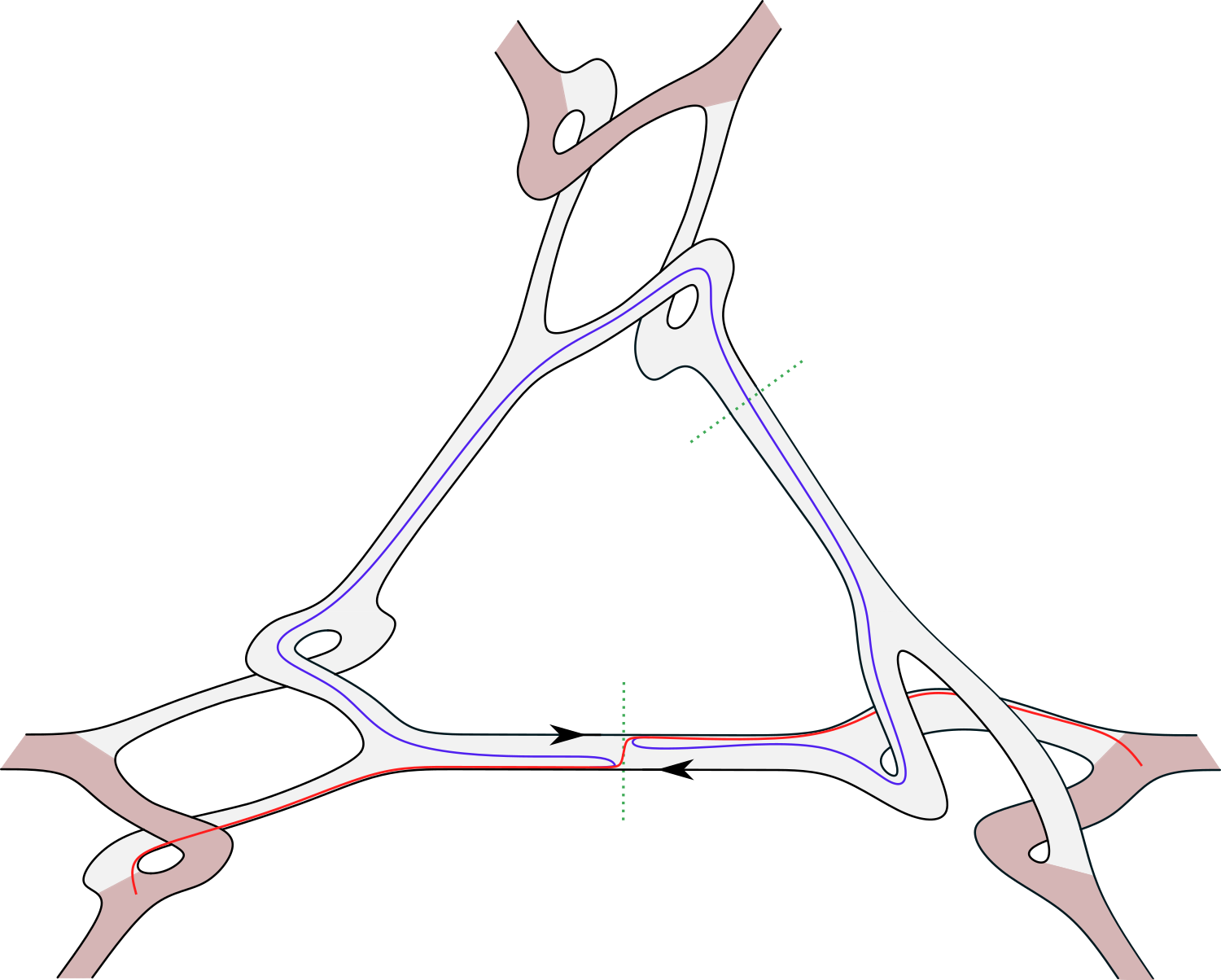}}

\put(60, 39){$f$}
\color{red}
\put(80, 25.5){$\delta$}
\color{blue}
\put(42, 26){$T\circ r(\delta)$}
\color{gray!50!green}
\put(58, 16){$x$}
\put(75, 61){$y$}
\color{gray}
\put(39, 51){$V$}
\color{gray!70!red}
\put(15, 3){$U$}
\color{black}

%\graphpaper(0,0)(120,96)
\end{picture}
\end{center}
\caption{Action of the partial return map around an alternating vertex.}
\label{partialreturn3}
\end{figure}

Take two arcs~$\gamma_x^+$ and~$\gamma_x^-$ in~$\dd\Sigma_1$, that start from the ends of~$\gamma_x$ and end in~$U$. Then define~$\delta$ to be an arbitrary closed smoothing of~$\delta_x^-\cup\delta_x\cup\delta_x^+$ that remains in~$\Sigma_1$, as in Figure~\ref{partialreturn3}. For~$p\in\delta$ arbitrary close to~$\dd\Sigma_1$,~$r_f(p)$ is not in~$\gamma_y$ since~$x$ and~$y$ are on different faces of~$f$. Thus~$r_f(\delta)$ intersects~$\gamma_y$ only once, corresponding to the geodesic in~$f$ between~$x$ and~$y$. Also by construction,~$\delta\cap\gamma_y=\emptyset$. Since~$\cor_r$ restrict to the identity outside a small neighbourhood of~$\Gamma_0\cap S$, we have~$[\cor_r\circ r(\delta)-\delta].[\gamma_y]=\pm 1$, the multiplicity is~$\pm 1$. Figure \ref{partialreturn3} shows in blue~$\cor_r\circ r(\delta)$, which helps finding the sign. To know more precisely the sign, one could detail how the orientation of~$\partial \Sigma_\eta$ imposes to~$\cor_r\circ r(\delta)$ to intersect~$\Sigma\cap T^1 y$ with this sign. So~$\cor_r\circ r$ is isotopic to a negative Dehn twist along~$\gamma_f$.

To prove the property in the general case, we could either use a covering of~$S$ such that~$f$ lifts to an embedded polyhedron, or adapt the last argument around the double corners (and see why~$V$ is still an annulus).
\end{proof}

\subsection{Reconstruction of the first-return map}

We now understand the first-return map as product of simple maps. We first define the curves appearing in Theorem \ref{introtheoremCrestated}. Then we restate and prove Theorem \ref{introtheoremCrestated}.

\begin{figure}
\centering
\includegraphics[scale=0.24]{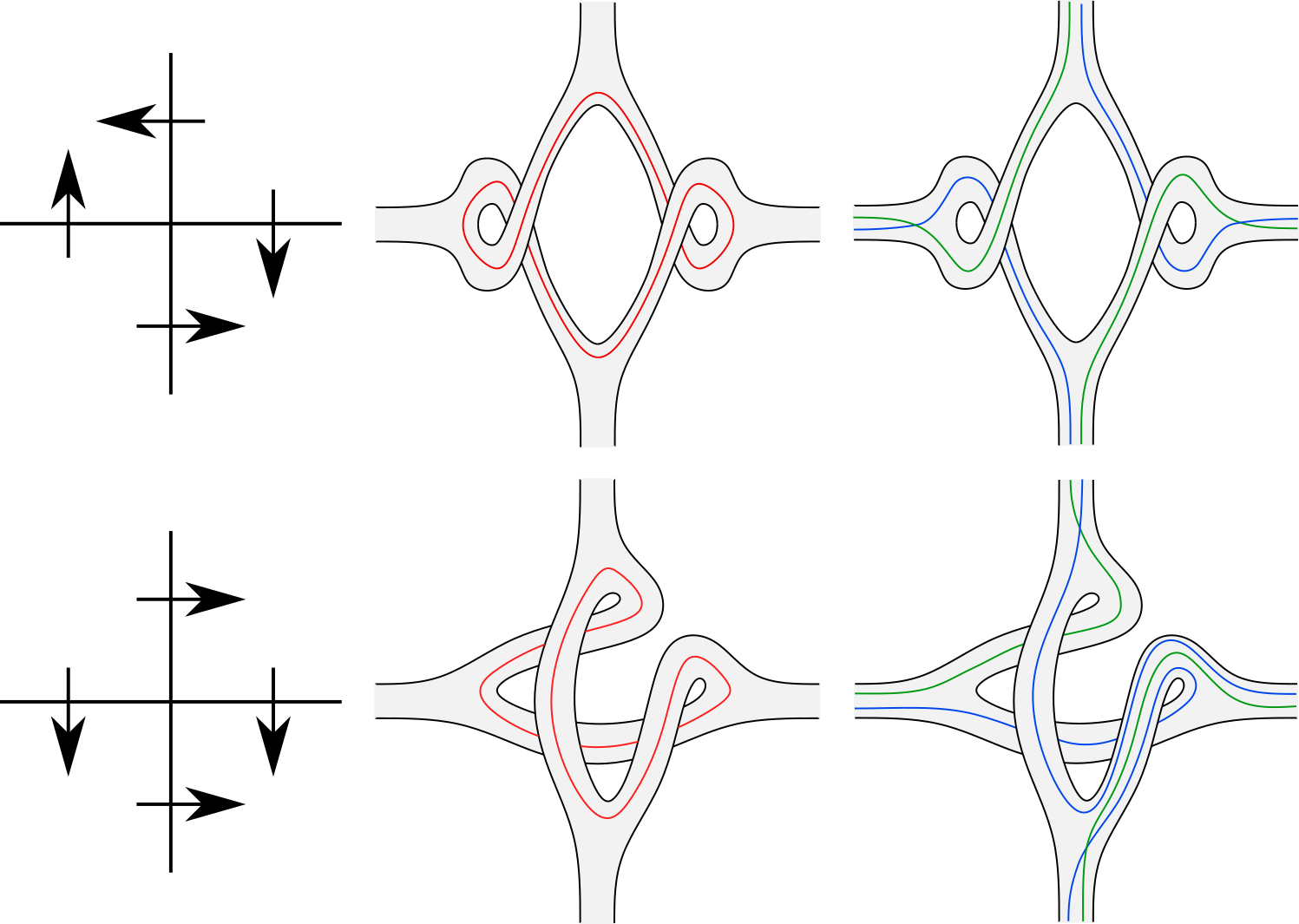}
\caption{Curves~$\gamma_v$ for a vertex~$v$ (in the middle), and~$\gamma_f$ for a face~$f$ (in the right).}
\label{fig:gammavf}
\end{figure}

\begin{definition}\label{def:curves}
For every vertex~$v\in\Gamma_0$, define the curve~$\gamma_v$ as the skeleton of the annulus~$\Sigma\cap T^1_BS$ for~$B\subset S$ a small ball around~$v$, as in Figure \ref{fig:gammavf}.

For each face~$f\in\Gamma_2$, define the curve~$\gamma_f$ in~$\Sigma_\eta$ that does one turn around~$f$, such that the behavior of~$\gamma_f$ around a corner of~$f$ is as in Figure \ref{fig:gammavf}.

If needed, we denote by~$\gamma_x^\eta$ the curve along~$x\in\Gamma_0\cup\Gamma_2$ for the coorientation~$\eta$.
\end{definition}

An example of a full~$\gamma_f$ is presented in Figure \ref{fig:curves}.

\setcounter{introtheorem}{2}
\begin{introtheorem}\label{introtheoremCrestated}
Let~$\eta$ be an acyclic Eulerian coorientation and~$\Sigma_\eta$ its corresponding Birkhoff section. Then the first-return map~$r:\Sigma_\eta\to\Sigma_\eta$ is the product of negative Dehn twists along~$\gamma_v$ for all~$v\in\Gamma_0$ and~$\gamma_f$ for all~$f\in\Gamma_0^\star$. The product is ordered by any representation of~$\eta$ (write from left to right from highest to lowest).
\end{introtheorem}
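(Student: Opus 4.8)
\textbf{Proof plan for Theorem \ref{introtheoremCrestated}.}

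The strategy is to combine the decomposition of the first-return map into partial return maps (Theorem \ref{introtheoremB}) with the explicit identification of each partial return map, suitably corrected, as a negative Dehn twist (Proposition \ref{Dehntwistface}), while accounting for the slide corrections by the vertex Dehn twists of Lemma \ref{lemma:DehnTwistVertex}. First I fix a representation $\leq$ of $\eta$, listing $\Gamma_0\cup\Gamma_2$ in decreasing order; restricting to faces gives a partial representation, and Theorem \ref{introtheoremB} writes $r_{\Sigma_\eta}=r_n\circ\cdots\circ r_1$, where $r_i:\Sigma_{i-1}\to\Sigma_i$ is the partial return map along the $i$-th face $f_i$ and $\Sigma_i=\Sigma_{\eta_i}$ with $\eta_i=I_{\{f_1,\dots,f_i\}}(\eta)$. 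The idea is to insert the slide corrections $\cor_i:\Sigma_i\to\Sigma_{i-1}$ and their inverses between consecutive $r_i$'s, so that each $\cor_i\circ r_i$ becomes (isotopic to) a negative Dehn twist along the curve $\gamma_{f_i}$ by Proposition \ref{Dehntwistface}, and the leftover correction factors $\cor_i^{-1}$ get telescoped and rewritten as products of negative vertex twists.

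Concretely, I would write
\[
r_{\Sigma_\eta}=(\cor_n^{-1}\cor_n)\,r_n\,(\cor_{n-1}^{-1}\cor_{n-1})\,r_{n-1}\cdots(\cor_1^{-1}\cor_1)\,r_1
=\cor_n^{-1}(\cor_n r_n)\cor_{n-1}^{-1}(\cor_{n-1}r_{n-1})\cdots\cor_1^{-1}(\cor_1 r_1),
\]
and then push each $\cor_i^{-1}$ to the right past the Dehn twists it does not interact with, using the canonical identifications of the $\Sigma_i$ as the same abstract surface $\Sigma_\Gamma$ (from the ribbon-graph description and the weak isotopies of Section \ref{tubularNeighbourhood}). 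Each $\cor_i$ is, by definition, a composition of slides along the edges $e_1,\dots,e_k$ at the corners of $f_i$; grouping the slides belonging to a single vertex $v$ and applying Lemma \ref{lemma:DehnTwistVertex} (which says that the full four-slide cycle $sl_4\circ sl_3\circ sl_2\circ sl_1$ around $v$ is the negative Dehn twist along $\gamma_v$), one checks that after all $n$ flips have been performed each vertex $v$ has had exactly its four quadrant-slides executed once, in the coherent order prescribed by the representation $\leq$. This is exactly Lemma \ref{lemma:partialreturns}: the set of edges whose coorientation has been reversed after $k$ flips is controlled by whether the two adjacent faces lie among the $k$ lowest, so each slide at a corner of $v$ occurs precisely when the relevant adjacent face is flipped, and the order in which $v$'s four slides occur is the order of those faces, which is the coherent order. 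Hence all the $\cor_i^{-1}$ and $\cor_i$ factors reassemble, in the correct interleaved order, into negative Dehn twists $\tau_{\gamma_v}^{-1}$, inserted among the $\tau_{\gamma_{f_i}}^{-1}$ at positions dictated by where $v$ sits relative to the $f_j$'s in the representation $\leq$ — which is precisely the coherent order.

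The routine parts are: checking that the slides at distinct vertices commute and can be reordered freely (already noted after Definition \ref{def:slide} and in the discussion of $\cor$); verifying that sliding along an edge twice is isotopic to the identity (the "pseudo-involution" remark), so that the bookkeeping of which slides have been used is consistent; and confirming that the curves $\gamma_f$ and $\gamma_v$ in Definition \ref{def:curves} are the ones actually appearing, independently of the coorientation, via the weak-isotopy identification of the $\Sigma_{\eta_i}$. The main obstacle I expect is the global bookkeeping of the telescoping: one must track, through the sequence of flips and through the identifications $\Sigma_{i-1}\cong\Sigma_i\cong\Sigma_\Gamma$, that every slide around every vertex is executed exactly once and in the coherent order, with no slide missed or doubled, and that when a face $f$ covers two opposite quadrants of a vertex (the situation of the Remark after Figure \ref{fig:augmented}) the double corner is counted correctly and the coherent order remains well-defined. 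A clean way to organize this is to prove, by induction on the flip algorithm, that after processing $f_1,\dots,f_k$ the accumulated diffeomorphism $\Sigma_\eta\to\Sigma_{\eta_k}$ (corrected to an endomorphism of $\Sigma_\Gamma$) equals the product of $\tau_{\gamma_{f_j}}^{-1}$ for $j\le k$ together with $\tau_{\gamma_v}^{-1}$ for exactly those vertices all four of whose adjacent faces lie in $\{f_1,\dots,f_k\}$, in coherent order; the inductive step is then a single application of Proposition \ref{Dehntwistface} plus Lemma \ref{lemma:DehnTwistVertex} and the commutation of disjoint slides, and at $k=n$ it yields the theorem.
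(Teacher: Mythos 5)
Your proposal is correct and follows essentially the same route as the paper's proof: the paper likewise inserts the slide corrections into the telescoped product $r_n\circ\cdots\circ r_1$ (writing it as $g_n^{-1}\circ\prod g_{i-1}\circ(\cor_i\circ r_i)\circ g_{i-1}^{-1}$ with $g_i=\cor_1\circ\cdots\circ\cor_i$, which is the same identity as your $\prod\cor_i^{-1}(\cor_i r_i)$), identifies $g_n^{-1}$ as the product of the vertex twists via the commutation of slides at distinct vertices and Lemma \ref{lemma:DehnTwistVertex}, and then uses the conjugation relation of Remark \ref{commutation} to interleave the vertex twists at the positions given by the coherent order while tracking the resulting curves $\gamma_f$.
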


\begin{remark}
According to Remark \ref{commutation}, we could have taken other curves and make them appear in a different order. We took a convention that depends mainly on the choice of the Eulerian coorientation.
\end{remark}

\begin{proof}[Proof of Theorem \ref{introtheoremCrestated}]
For this proof, we denote by~$\Tau\gamma$ the negative Dehn twist along~$\gamma$, for a simple curve~$\gamma$.

Take a representation~$\preceq$ of~$\eta$ and order the faces of~$\Gamma$ by~$f_1\preceq\hdots\preceq f_n$. Denote by~$\eta_0=\eta$, successively~$\eta_i=I_{f_i}(\eta_{i-1})$ the flip of~$\eta_{i-1}$ along~$f_i$, so that~$\eta_n=\eta$, and denote by~$\Sigma_i=\Sigma_{\eta_i}$. According to Theorem \ref{introtheoremB}, the first-return map is a product of partial return maps~$r_n\circ\hdots\circ r_1$ for the partial return map~$r_i:\Sigma_{i-1}\to\Sigma_i$ along the face~$f_i$. Denote by~$\cor_i:\Sigma_{i}\to\Sigma_{i-1}$ the slide correction of~$r_i$, and for~$1\leq i\leq n$ define~$g_i=\cor_1\circ\hdots\circ \cor_i$. According to Proposition \ref{Dehntwistface},~$\cor_i\circ r_i$ is isotopic to a negative Dehn twist along~$\gamma_{f_i}^{\eta_{i-1}}$, so that:

\begin{align*}
r &= r_n\circ\hdots\circ r_1 \\
 &= \prod_{i=n}^1 r_i \\
 &= \prod_{i=n}^1 g_i^{-1}\circ g_{i-1}\circ \cor_i\circ r_i\\
 &= g_n^{-1}\circ \prod_{i=n}^1 g_{i-1}\circ \cor_i\circ r_i\circ g_{i-1}^{-1} \\
r &= g_n^{-1}\circ \prod_{i=n}^1 g_{i-1}\circ \Tau\gamma_{f_i}^{\eta_{i-1}}\circ g_{i-1}^{-1}
\end{align*}

We will see that~$g_n^{-1}:\Sigma_\eta\to\Sigma_\eta$ is a commutative product of Dehn twists, and then we characterize the curve along which~$g_{i-1}\circ \Tau\gamma_{f_i}^{\eta_{i-1}}\circ g_{i-1}^{-1}$ is a Dehn twist. We will also change the order in which the Dehn twists appear.

We claim that:~$$g_n^{-1}=\prod_{v\in\Gamma_0}\Tau{\gamma_v^{\eta}}$$

The isotopy~$g_n^{-1}$ is a concatenation of slides (which are isotopies). Since all faces appear in the product~$g_n=\cor_1\circ\hdots\circ \cor_n$, the slide for every corner of~$\Gamma$ appears in~$g_n^{-1}$. If two slides appear on different vertices of~$\Gamma$, they have disjoint supports. So we can rearrange the slides so that they appear by groups of four, one group for every vertex of~$\Gamma$. The way we have constructed~$g_n^{-1}$ ensures that the slides in each group appear in the same order than in Lemma \ref{lemma:DehnTwistVertex}. So the lemma proves that~$g_n^{-1}$ is a product of negative Dehn twists along the curve~$\gamma_v$.

We proved that~$r$ is the product of negative Dehn twists given by the following product, where~$g_{i-1}\circ \Tau\gamma_{f_i}^{\eta_{i-1}}\circ g_{i-1}^{-1}$ is a pullback of a negative Dehn twist into~$\Sigma_\eta$:
\[r = \prod_{v\in\Gamma_0}\Tau{\gamma_v^{\eta}}\circ \prod_{i=n}^1 (g_{i-1}\circ \Tau\gamma_{f_i}^{\eta_{i-1}}\circ g_{i-1}^{-1})\]

 Also~$g_{i-1}\circ \Tau\gamma_{f_i}^{\eta_{i-1}}\circ g_{i-1}^{-1}=\Tau(g_{i-1}^{-1}(\gamma_f^{\eta_{i-1}}))$ (see Remark \ref{commutation}). Here,~$f$ is the~$i$-th face given by the representation, and~$\gamma_f$ is the curve of~$\Sigma_i$ along~$f$.

\begin{remark}\label{commutation}
Recall that for~$g$ a orientation-preserving diffeomorphism of~$S$ and~$\gamma$ a simple closed curve on~$S$, we have~$g\circ \Tau\gamma\circ g^{-1}=\Tau{g(\gamma)}$ (Fact 3.7 of \cite{farb2012primer}). So if~$\gamma$ and~$\delta$ are two simple closed curves on~$S$, we have 
\begin{align*}
	\Tau\delta\circ\Tau\gamma &= \Tau(\Tau\delta(\gamma))\circ\Tau\delta\\
		&= \Tau\gamma\circ\Tau(\Tau^{-1}\gamma(\delta))
	\end{align*}
\end{remark}

Instead of computing~$g_{i-1}^{-1}(\gamma_f^{\eta_{i-1}})$, we first change the order of Dehn twists in the product, so that they appear in the order prescribed by the representation. In order not to increase the number of notations and indices, we will do an informal proof. We already know that~$r$ is a product of curves corresponding to all vertices and all faces of~$\Gamma$. Also the Dehn twists corresponding to the faces are already in positions prescribed by the induced partial representation. We first change the order in our product, then prove that the Dehn twists are along the announced curves.

A curve~$\gamma_v^\eta$ can only intersect~$g_{i-1}^{-1}(\gamma_{f_i}^{\eta_{i-1}})$ if~$f_i$ admits~$v$ as corner. So we can change the position of~$\gamma_v^\eta$ in the product until~$\Tau{\gamma_v^\eta}\circ \Tau(g_{i-1}^{-1}(\gamma_{f_i}^{\eta_{i-1}}))$ appears in the product. According to Remark \ref{commutation}, we have:

\begin{equation}
\label{equation}
\Tau{\gamma_v^\eta}\circ \Tau(g_{i-1}^{-1}(\gamma_{f_i}^{\eta_{i-1}}))=\Tau(\Tau\gamma_v^\eta(g_{i-1}^{-1}(\gamma_{f_i}^{\eta_{i-1}})))\circ \Tau{\gamma_v^\eta}
\end{equation}

We use this equation for changing the position of the two curves in the product. We repeat the procedure until~$\Tau{\gamma_v}$ is at the place prescribed by the representation. We do the same procedure for all~$v\in\Gamma_0$.

It remains to see along which curves is the Dehn twist associated to a face~$f$. We push~$\gamma_{f_i}^{\eta_{i-1}}$ back in~$\Sigma_\eta$ with~$g_{i-1}$, which is a series of slides. Only the ones along corners of~$f$ matter. We then compose it with~$\Tau{\gamma_v}$ for the corners~$v$ of~$f$ that are ordered before~$f$, which is also a series of slides. We proved that~$g_{n}^{-1}$ is the product of~$\Tau{\gamma_v}$, so by the slides induced by the~$\Tau{\gamma_v}$ are reversed slides of some of the slides induced by~$g_{i-1}$. It remains to take a corner~$v$ of~$f$, and see as in Figure \ref{svertexisotopy1}, how the slides act on~$\gamma_{f_i}^{\eta_{i-1}}$, depending on the representation around~$v$.

We do one case as an example, the other cases are similar. In Figure \ref{fig:determination} is presented the case of a non-alternating vertex~$v$. Suppose there is no double corner and the face~$f$ is in third position around~$v$. We represent~$\gamma_{f_3}^{\eta_2}\in\Sigma_2$, and we push it into~$\Sigma_0$ using slides along the first and second corner. Since~$v<f$ for the representation, we use Equation~$\eqref{equation}$ to place~$v$ and~$f$ in relative position in the product. The corresponding curve, represented in the right, is isotopic to~$\gamma_f$ from Definition \ref{def:curves}.

\begin{figure}
\centering

\begin{center}
\begin{picture}(140, 33)(0,0)
\put(0,0){\includegraphics[width = 140mm]{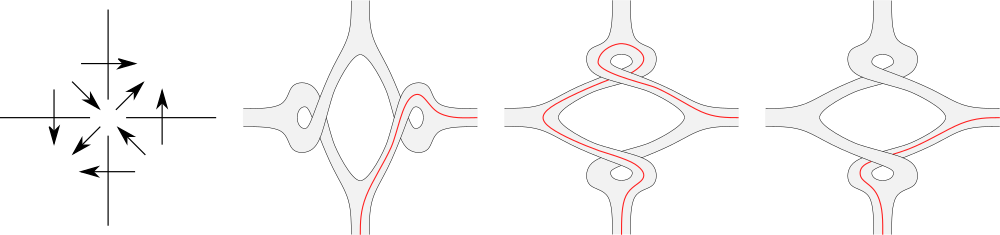}}

\put(14, 15.5){$v$}
\put(21.5, 23){$1$}
\put(6.5, 8){$2$}
\put(21.5, 8){$3$}
\put(6.5, 23){$4$}

\put(60, 3){$\Sigma_2$}
\put(96, 3){$\Sigma_0$}
\put(133, 3){$\Sigma_0$}

\color{red}
\put(48, 16){$\gamma_{f_3}^{\eta_2}$}
\put(80, 16){$g_2^{-1}(\gamma_{f_3}^{\eta_2})$}
\put(113, 16){$T\gamma_v g_2^{-1}(\gamma_{f_3}^{\eta_2})$}
\color{black}

%\graphpaper(0,0)(140,33)
\end{picture}
\end{center}
\caption{Determination of~$\gamma_f$ around a non-alternating vertex in third position.}
\label{fig:determination}
\end{figure}

\end{proof}

\subsection{Comparison of different Eulerian coorientations}

In this subsection, we compare the explicit products of negative Dehn twists for different representations or different acyclic Eulerian coorientations. We will in particular prove Corollary~\ref{introcorollaryD}.

Assume that~$\eta$ is an acyclic Eulerian coorientation, so that the surface~$\Sigma_\eta$ is a Birkhoff section. Given two representations, the curves~$\gamma_v$ and~$\gamma_f$ depend on~$\eta$ only. 

\begin{lemma}\label{lemma:comparationsamecoorientation}
Let~$\preceq_1$ and~$\preceq_2$ be two representations of~$\eta$. The two products of negative Dehn twists in Theorem \ref{introtheoremCrestated} can be changed one into another one by successively swapping the positions of consecutive commutating Dehn twists.
\end{lemma}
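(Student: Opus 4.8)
The plan is to realize the two products as two vertices of a connected graph whose edges are the "swap two consecutive commuting twists" moves, and to do this by reducing to the combinatorial fact that any two total orders on $\Gamma_0\cup\Gamma_2$ extending the same partial order (here: the coorientation $\eta$ together with the coherent order) are connected by transpositions of consecutive incomparable elements. First I would recall that, by Theorem \ref{introtheoremCrestated}, for each representation $\preceq_j$ the first-return map $r$ is written as $r=\prod_{x\in\Gamma_0\cup\Gamma_2}\Tau{\gamma_x}$, the product being taken in the order $\preceq_j$, and that the curves $\gamma_v$ ($v\in\Gamma_0$) and $\gamma_f$ ($f\in\Gamma_2$) do \emph{not} depend on the representation, only on $\eta$. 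So the two expressions are literally the same formal product of the same Dehn twists along the same curves, read off in two possibly different total orders.

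Next I would establish the key geometric input: if $x,y\in\Gamma_0\cup\Gamma_2$ are incomparable for the partial order induced by $\eta$ and the coherent order, then $\gamma_x$ and $\gamma_y$ are disjoint, hence $\Tau{\gamma_x}$ and $\Tau{\gamma_y}$ commute. Indeed, two faces $f,f'$ incomparable for $\eta$ share no edge whose two sides point from $f$ to $f'$ (or vice versa), so after the construction the curves $\gamma_f,\gamma_{f'}$ can be made disjoint; a vertex $v$ and a face $f$ are ordered whenever $v$ is a corner of $f$, so incomparability forces $v\notin\partial f$ and again $\gamma_v\cap\gamma_f=\emptyset$; two distinct vertices always have disjoint $\gamma_v$'s by the Remark after Lemma \ref{lemma:DehnTwistVertex}. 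Thus every transposition of two $\preceq$-consecutive elements that are incomparable in the common partial order is exactly a "swap of consecutive commuting Dehn twists" in the sense of the statement, and it does not change the product.

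Then the proof is finished by the standard lemma on linear extensions: any two linear extensions $\preceq_1,\preceq_2$ of a fixed finite partial order are related by a finite sequence of adjacent transpositions, each transposing two consecutive elements that are incomparable in the partial order. (Proof by induction on the number of inversions: pick the $\preceq_1$-minimal element $m$; it is below everything in $\preceq_1$ but has some elements before it in $\preceq_2$; the one immediately preceding $m$ in $\preceq_2$ is $\preceq$-incomparable to $m$, so swap them, reducing the number of pairs on which the two orders disagree.) Applying each such transposition to the product form of $r$ gives the claimed chain of swaps from the $\preceq_1$-ordered product to the $\preceq_2$-ordered product.

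I expect the only nontrivial point to be the disjointness statement $\gamma_x\cap\gamma_y=\emptyset$ for $\preceq$-incomparable $x,y$, i.e. checking that the explicit curves of Definition \ref{def:curves} can be isotoped off each other precisely when the elements are unordered — this is where one must use Lemma \ref{acyclic} (an edge bounds two distinct faces, a face covers only opposite quadrants at a vertex) to control how $\gamma_f$ runs near the corners of $f$. The order-theoretic part is routine. One should also note at the end that both products equal $r$ \emph{on the nose} (not merely up to conjugacy), since the slide corrections in the proof of Theorem \ref{introtheoremCrestated} were already absorbed into the $\gamma_v$'s, so no extra bookkeeping of conjugations is needed when swapping.
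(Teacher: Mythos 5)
Your proposal is correct and follows essentially the same route as the paper: both arguments observe that non-commuting twists can only occur for adjacent faces or a face and one of its corners, which are always comparable under the coherent order, and then conclude by the standard induction relating two linear extensions of a partial order via adjacent transpositions of incomparable (hence commuting) elements. Your version merely states the contrapositive of the paper's first step and is slightly more explicit about verifying disjointness of the curves from Definition \ref{def:curves}.
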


\begin{proof}
Denote by~$\preceq_0$ the coherent ordering, which is a partial ordering on~$\Gamma_0\cup\Gamma_2$. By definition,~$\preceq_1$ and~$\preceq_2$ agree with~$\preceq_0$. Let~$x,y$ in~$\Gamma_0\cup\Gamma_2$ and suppose that~$\gamma_x$ and~$\gamma_y$ do not commute. Then by definition of the curves~$\gamma_x$ and~$\gamma_y$,~$x$ and~$y$ must be either two adjacent faces or one face and an adjacent vertex, thus they are comparable under~$\preceq_0$. So~$x$ and~$y$ have the same ordering under~$\preceq_1$ and~$\preceq_2$. 

Now suppose that~$\preceq_1$ and~$\preceq_2$ are not equal, and let~$x$ and~$y$ be in~$\Gamma_0\cup\Gamma_2$ not ordered in the same way by~$\preceq_1$ and~$\preceq_2$. Such elements~$(x,y)$ can be taken consecutive in~$\preceq_2$, otherwise~$\preceq_1$ and~$\preceq_2$ would be equal. By what precedes,~$x$ and~$y$ are not adjacent, and the negative Dehn twists along~$\gamma_x$ and~$\gamma_y$ commute. So we can define~$\preceq_3$ by only swapping in~$\preceq_2$ the ordering of~$x$ and~$y$, and~$\preceq_3$ is a representation of~$\eta$. This procedure can be recursively repeated to~$\preceq_1$ and~$\preceq_3$, and terminates in a finite number of steps.
\end{proof}

Given two isotopic Birkhoff sections and decompositions of the two first return maps in Dehn twists, we can use the isotopy to compare the decompositions. We denote by~$\gamma_x^\nu$ the curve associated to~$x\in\Gamma_0\cup\Gamma_2$ and for the coorientation~$\nu$.

\begin{lemma}
Let~$\eta$ and~$\nu$ be two cohomologous acyclic Eulerian coorientations. Then the two decompositions of the first return map given by Theorem \ref{introtheoremCrestated} for~$\eta$ and~$\nu$ are Hurwitz equivalent. 
\end{lemma}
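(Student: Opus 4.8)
\emph{Strategy and Step 1 (reduction to one flip).} The plan is to reduce the statement to the case of a single elementary flip and then read off the effect on the factorization from the cyclic structure of the flip algorithm of Section~\ref{section:combiflip}; throughout, ``Hurwitz equivalent'' is meant up to a global conjugation, since $\Sigma_\eta$ and $\Sigma_\nu$ come identified only up to isotopy. First I would show that two cohomologous acyclic Eulerian coorientations $\eta$ and $\nu$ are joined by a finite chain $\eta=\eta^{(0)},\dots,\eta^{(N)}=\nu$ in which each consecutive pair differs by an elementary flip $I_f$ along a sink face or by its inverse along a source face. For homological reasons $\nu-\eta$ is a coboundary $\delta c$ with $c$ an integer function on the faces of $\Gamma$, and a chip-firing type induction on a potential such as $\sum_f c(f)^2$ produces the chain: among the faces on which $c$ is extremal one always finds a sink or a source of the current coorientation, and flipping it keeps the coorientation acyclic (Lemma~\ref{acyclic}) and cohomologous while lowering the potential. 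Since Hurwitz equivalence up to global conjugation is transitive and the statement is symmetric in $\eta$ and $\nu$, it suffices to treat $\nu=I_f(\eta)$ for a single sink face $f$.

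\emph{Step 2 (the conjugating map).} Choose a partial representation of $\eta$ with $f$ minimal and refine it to a representation $\preceq$ in which $f$ is the global minimum of $\Gamma_0\cup\Gamma_2$; this is possible because at every corner of a sink face both the coorientation and the coherent order put the incident vertex and the neighbouring faces above $f$. Running the flip algorithm from $(\eta,\preceq)$ produces the cyclic family $\Sigma_\eta=\Sigma_0\to\Sigma_1\to\cdots\to\Sigma_n=\Sigma_\eta$ with $\Sigma_1=\Sigma_\nu$, and by Lemma~\ref{lemma:partialreturns} the family obtained from $\nu$ with ``$\preceq$ and $f$ moved to the top'' is the cyclic shift of this one by one step. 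Together with Theorem~\ref{introtheoremB} this gives $r_{\Sigma_\nu}=r_1\circ r_{\Sigma_\eta}\circ r_1^{-1}$, where $r_1\colon\Sigma_\eta\to\Sigma_\nu$ is the partial return map along $f$, which is the identity away from $T^1_fS$. By Proposition~\ref{Dehntwistface}, $\cor_1\circ r_1=\Tau{\gamma_f^\eta}$ for the slide correction $\cor_1\colon\Sigma_\nu\to\Sigma_\eta$, hence $r_1=\cor_1^{-1}\circ\Tau{\gamma_f^\eta}$.

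\emph{Step 3 (effect on the factorization).} Write the Theorem~\ref{introtheoremCrestated} factorization for $(\eta,\preceq)$ as $r_{\Sigma_\eta}=A_1\circ\cdots\circ A_{m-1}\circ A_m$ with rightmost factor $A_m=\Tau{\gamma_f^\eta}$ (legitimate since $f$ is $\preceq$-minimal) and each $A_j$ a negative twist along $\gamma_{x_j}^\eta$. Substituting $r_1=\cor_1^{-1}A_m$,
\[ r_{\Sigma_\nu}=\cor_1^{-1}\big(A_m\circ A_1\circ\cdots\circ A_{m-1}\big)\cor_1 = \Tau{\cor_1^{-1}(\gamma_f^\eta)}\circ\Tau{\cor_1^{-1}(\gamma_{x_1}^\eta)}\circ\cdots\circ\Tau{\cor_1^{-1}(\gamma_{x_{m-1}}^\eta)}, \]
using $g\,\Tau\gamma\,g^{-1}=\Tau{g(\gamma)}$ (Remark~\ref{commutation}). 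Thus, up to the cyclic rotation of factors (a product of Hurwitz moves together with one global conjugation) and the transport of every curve through $\cor_1^{-1}$, the $\nu$-factorization is the $\eta$-factorization. It remains to match $\cor_1^{-1}(\gamma_x^\eta)$ with the standard curve $\gamma_x^\nu$ and to reorder the factors into a valid $\nu$-representation order: for $x$ not incident to $f$ this is immediate, since $\cor_1$ is the identity there and $\eta,\nu$ agree away from $\partial f$, so $\cor_1^{-1}(\gamma_x^\eta)=\gamma_x^\eta=\gamma_x^\nu$; for the vertices on $\partial f$ and the faces adjacent to $f$ one traces the slides composing $\cor_1$ against the corner description used in the proof of Theorem~\ref{introtheoremCrestated} (Figures~\ref{svertexisotopy1} and~\ref{fig:determination}), noting that flipping $f$ turns it from sink to source at each corner and toggles the alternating/non-alternating type of those vertices, which changes the local contribution to the curve exactly as the $\nu$-picture prescribes. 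The order $(f,x_1,\dots,x_{m-1})$ differs from a $\nu$-representation order only in the relative positions of these $\partial f$-vertices and their incident faces, whose twists need not commute; that last reordering is carried out by Hurwitz moves, after which Lemma~\ref{lemma:comparationsamecoorientation} identifies the outcome with the standard Theorem~\ref{introtheoremCrestated} factorization for $\nu$.

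\emph{Main obstacle.} The delicate part is the curve/order matching of Step~3 near the flipped face: one must essentially redo the corner analysis of Theorem~\ref{introtheoremCrestated} simultaneously for $\eta$ and for $\nu=I_f(\eta)$, and verify that the slide correction $\cor_1$ together with the Hurwitz reordering of the non-commuting local twists carries the $\eta$-curves precisely onto the $\nu$-curves. The connectivity statement of Step~1, although classical in spirit, is the secondary technical point and needs the coboundary bookkeeping on $\Gamma^\star$ done carefully.
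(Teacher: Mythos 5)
Your proof follows essentially the same route as the paper's: reduce to a single elementary flip, use the partial return map together with its slide correction to identify~$\Sigma_\eta$ with~$\Sigma_\nu$, move the factor~$\Tau{\gamma_f}$ cyclically through the product by Hurwitz moves, and match the transported curves with the standard~$\nu$-curves (faces not adjacent to~$f$ unchanged, adjacent faces and corner vertices corrected by a twist along~$\gamma_f$). The one place you diverge is Step~1: the paper does not re-derive the connectivity of cohomologous acyclic coorientations under flips but cites Proposition~\ref{proposition:coorientationEquivalence} (Appendix~\ref{appendix: comparison}, a geometric reformulation of Propp's theorem), whereas your chip-firing sketch leaves unjustified the key claim that among the faces where the potential~$c$ is extremal one finds a sink or source of the current coorientation; the safest fix is simply to invoke that proposition. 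Apart from this, your argument sits at the same level of detail as the paper's own, including the deferral of the corner-by-corner curve matching to a case analysis.
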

	
\begin{proof}
We first consider the case for~$\eta$ an acyclic coorientation,~$f$ a sink face of~$\eta$ and~$\nu=I_f(\eta)$ the cohomologous coorientation obtained from~$\eta$ by flipping~$f$. The partial return map~$r_f:\Sigma_\eta\to\Sigma_\nu$ is a diffeomorphism, that we can use to compare the Dehn twists on both surfaces. Let~$f'$ be a face of~$\Gamma$. If~$f'$ and~$f$ are not adjacent, then~$r_f(\gamma_{f'}^\eta)=\gamma_{f'}^\nu$, and the Dehn twists~$\gamma_{f}^\nu$ and~$\gamma_{f'}^\nu$ commute. If~$f'$ and~$f$ are adjacent,~$r_f(\gamma_{f'}^\eta)$ and~$\gamma_{f'}^\nu$ differ by a Dehn twist along~$\gamma_f^\nu$.  

There exist two representations~$\preceq_\eta$ and~$\preceq_\nu$ of~$\eta$ and~$\nu$ that differ only for~$f$, so that~$f$ is the minimum of~$\preceq_\eta$ and the maximum of~$\preceq_\nu$. Consider the decompositions in Dehn twists given by Theorem \ref{introtheoremCrestated} for these representations. There is a Hurwitz equivalence between both decompositions, that consists in successively swapping the position of~$\tau_{\gamma_f}$ with its neighbor Dehn twists and conjugating the neighbor Dehn twists by~$\tau_{\gamma_f}$. 

Now consider~$\eta$ and~$\nu$ any two cohomologous acyclic Eulerian coorientations. According to Lemma \ref{lemma:comparationsamecoorientation}, the Hurwitz equivalence class of the decomposition does not depend on the choice of a representation. Thanks to Proposition \ref{proposition:coorientationEquivalence} in Appendix \ref{appendix: comparison}, there exists a finite sequence of flips that change~$\eta$ into~$\nu$. One can use this sequence and the previous paragraph to compare the two decompositions of the first return map produced by Theorem \ref{introtheoremCrestated}.
\end{proof}

Corollary \ref{introcorollaryD}, proved below, proposes an alternative comparison, even for cohomologous coorientations.

\begin{introcorollary}\label{introcorollaryD}
Let~$S$ be a hyperbolic surface,~$\Gamma$ a finit collection of closed geodesics on~$S$, and consider the geodesic flow on~$T^1S$. There exists a common combinatorial model~$\Sigma_\Gamma$ for all Birkhoff sections with boundary~$-\darrow{\Gamma}$, and an explicit family of simple closed curves~$\gamma_1, \cdots, \gamma_n$ in~$\Sigma_\Gamma$ such that the first-return maps for these Birkhoff sections are product of negative Dehn twists of the form~$\Tau{\gamma_{\sigma(1)}}\circ\dots\circ\Tau{\gamma_{\sigma(n)}}$ for some permutation~$\sigma$ of~$\{1, \cdots, n\}$. 
\end{introcorollary}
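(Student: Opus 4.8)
The plan is to deduce Corollary \ref{introcorollaryD} from Theorem \ref{introtheoremCrestated} by abstracting away the dependence on the coorientation $\eta$. First I would fix, once and for all, the underlying topological surface $\Sigma_\Gamma$: by the remark following the construction of $\Sigma_\eta$, the diffeomorphism type of $\Sigma_\eta$ does not depend on $\eta$ (only on the combinatorics of $\Gamma$), so there is a well-defined abstract surface with boundary $-\darrow\Gamma$, which I will call $\Sigma_\Gamma$. Next I would produce the abstract family of curves: for each $v\in\Gamma_0$ the curve $\gamma_v$ is supported in a neighbourhood of $T^1_v S$ and, by the remark after Lemma \ref{lemma:DehnTwistVertex}, its isotopy class in $\Sigma_\Gamma$ is independent of the coorientation; for each face $f\in\Gamma_2$ the curve $\gamma_f$ does one turn around $f$ with the prescribed behaviour at each corner (Definition \ref{def:curves}), and although its \emph{isotopy class inside $T^1S$} moves with $\eta$, its isotopy class on the abstract surface $\Sigma_\Gamma$ is determined by the combinatorics of $\Gamma$ around $f$ — this is where I would have to be slightly careful. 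Relabel $\{\gamma_v : v\in\Gamma_0\}\cup\{\gamma_f : f\in\Gamma_2\}$ as $\gamma_1,\dots,\gamma_n$ with $n=|\Gamma_0|+|\Gamma_2|$.

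With this setup, the argument is essentially a citation of Theorem \ref{introtheoremCrestated}. Given any Birkhoff section with boundary $-\darrow\Gamma$, by the classification recalled in Section \ref{subsection:construction} (Theorems C and D of \cite{CossariniDehornoy}) it is isotopic through the flow to some $\Sigma_\eta$ for an acyclic Eulerian coorientation $\eta$, and under the identification $\Sigma_\eta\cong\Sigma_\Gamma$ its first-return map is conjugate to the one on $\Sigma_\eta$. Theorem \ref{introtheoremCrestated} then writes this first-return map as $\Tau{\gamma_{x_1}}\circ\cdots\circ\Tau{\gamma_{x_n}}$ where $x_1\succeq\cdots\succeq x_n$ enumerate $\Gamma_0\cup\Gamma_2$ according to a representation of $\eta$; pulling back along the identification with $\Sigma_\Gamma$ and reindexing, this is exactly $\Tau{\gamma_{\sigma(1)}}\circ\cdots\circ\Tau{\gamma_{\sigma(n)}}$ for the permutation $\sigma$ reading off the representation. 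Since the statement only asserts \emph{existence} of such a $\sigma$, no further control on $\sigma$ is needed; I would merely remark (as the surrounding text does) that $\sigma$ is governed by the position of the coorientation in the fibered face but is not canonical.

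The one genuine subtlety, and the step I expect to require the most care, is checking that the curves $\gamma_f$ make sense as a fixed family on $\Sigma_\Gamma$ independently of $\eta$. The curve $\gamma_f$ in Theorem \ref{introtheoremCrestated} is the curve $\gamma_{f_i}^{\eta_{i-1}}$ transported back to $\Sigma_\eta$ through the slide maps $g_{i-1}$; a priori this could depend on the intermediate coorientations in the flip algorithm, and for a different starting coorientation $\nu$ one would get $\gamma_f^\nu\subset\Sigma_\nu$. To handle this I would invoke the last two lemmas of the section: the curves $\gamma_v,\gamma_f$ are shown there to depend on the coorientation alone (not on the representation), and for cohomologous coorientations the decompositions are Hurwitz equivalent via the partial return maps, which on the abstract surface $\Sigma_\Gamma$ act by further Dehn twists along curves of the same family. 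Thus after identifying all the $\Sigma_\eta$ with $\Sigma_\Gamma$ one obtains a single family $\{\gamma_1,\dots,\gamma_n\}$ of isotopy classes, and every first-return map is a product of negative Dehn twists drawn from exactly this family, with only the ordering permutation $\sigma$ varying. Assembling these observations gives the corollary.
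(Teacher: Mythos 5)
Your overall strategy (fix one reference section $\Sigma_\Gamma=\Sigma_\eta$, reduce every other Birkhoff section to some $\Sigma_\nu$ via the classification, and pull the decomposition of Theorem~\ref{introtheoremCrestated} back to $\Sigma_\Gamma$) is the same as the paper's, and you correctly locate the one real difficulty: whether the curves $\gamma_f$ form a single family on $\Sigma_\Gamma$ independent of the coorientation. But your resolution of that difficulty has a genuine gap. The answer to the question you flag is in fact \emph{no}: under the identification $sl:\Sigma_\eta\to\Sigma_\nu$ given by slides, the paper computes that $sl(\gamma_v^\eta)=\gamma_v^\nu$ for every vertex, but for a face $f$ adjacent to $v$ the curves $sl(\gamma_f^\eta)$ and $\gamma_f^\nu$ may differ by $\tau_{\gamma_v}^{\pm1}$. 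So the curve family produced by Theorem~\ref{introtheoremCrestated} for $\nu$, transported to $\Sigma_\Gamma$, is \emph{not} the fixed family $\{\gamma_1,\dots,\gamma_n\}$, and the corollary does not follow by mere relabelling.

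Invoking the Hurwitz-equivalence lemma does not repair this. A Hurwitz move replaces a pair $T_a\circ T_b$ by $T_{T_a(b)}\circ T_a$, and the new curve $T_a(b)$ is in general \emph{not} a member of the original family; Hurwitz equivalence therefore cannot yield the much stronger conclusion that only the order $\sigma$ changes while the underlying set of $n$ curves is preserved. (The paper explicitly presents that lemma as an \emph{alternative} comparison, not as an ingredient of Corollary~\ref{introcorollaryD}.) What is actually needed, and what the paper's proof supplies, is a case-by-case analysis of the nine local configurations of $(\eta,\nu)$ around each vertex $v$, showing that the discrepancy between $sl(\gamma_f^\eta)$ and $\gamma_f^\nu$ is exactly a power of $\tau_{\gamma_v}$, followed by a repositioning of the factor $\tau_{\gamma_v}$ inside the product of Theorem~\ref{introtheoremCrestated} using the relation $g\circ\tau_\gamma\circ g^{-1}=\tau_{g(\gamma)}$, so that the conjugations incurred by moving $\tau_{\gamma_v}$ past the adjacent face twists convert each $\gamma_f^\nu$ into $sl(\gamma_f^\eta)$. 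This absorption step is the substance of the corollary and is missing from your argument.
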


\begin{proof}
Let~$\eta$ be any acyclic Eulerian coorientation of~$\Gamma$. Define~$\Sigma_\Gamma=\Sigma_\eta$, and~$\{\gamma_i\}=\{\gamma_f,f\in\Gamma_2\}\cup\{\gamma_v,v\in\Gamma_0\}$. Every Birkhoff section in the fibered face is isotope to a Birkhoff section~$\Sigma_\nu$ for~$\nu\in\coorientation{\Gamma}$ acyclic. We will do a finite sequence of slides to compare~$\Sigma_\nu$ to~$\Sigma_\Gamma$, together with the curves they hold.

Let~$v$ be a vertex in~$\Gamma_0$. Up to symmetry and rotation, there are nine configurations for~$(\eta,\nu)$ around~$v$. For each configuration, we can do one or two slides to isotope~$\Sigma_\eta$ to~$\Sigma_\nu$ around~$v$. Denote~$sl:\Sigma_\eta\to\Sigma_\nu$ the diffeomorphism induced by the sequence of slides. We can compare~$\gamma_f^\nu$ and~$sl(\gamma_f^\eta)$. In each case~$\gamma_v^\nu = sl(\gamma_v^\eta)$, and for any face~$f$ adjacent to~$v$, either~$\gamma_f^\nu$ and~$sl(\gamma_f^\eta)$ are equal, or they differ by a Dehn twist along the curve~$\gamma_v$ (positive of negative). And in every case, we can apply Theorem \ref{introtheoremCrestated} to~$\nu$, and obtain a product of negative Dehn twists along the curves~$\gamma_v^\nu$ and~$\gamma_f^\nu$. We can swap positions of consecutive Dehn twists, including~$\gamma_v^\nu$ and~$\gamma_f^\nu$, to obtain a product of Dehn twists along the curves~$sl(\gamma_v^\eta)$ and~$sl(\gamma_f^\eta)$. We will detail one case, the others being similar.

\begin{figure}
	\centering
	
	\begin{center}
	\begin{picture}(70, 60)(0,0)
	\put(0,0){\includegraphics[width = 70mm]{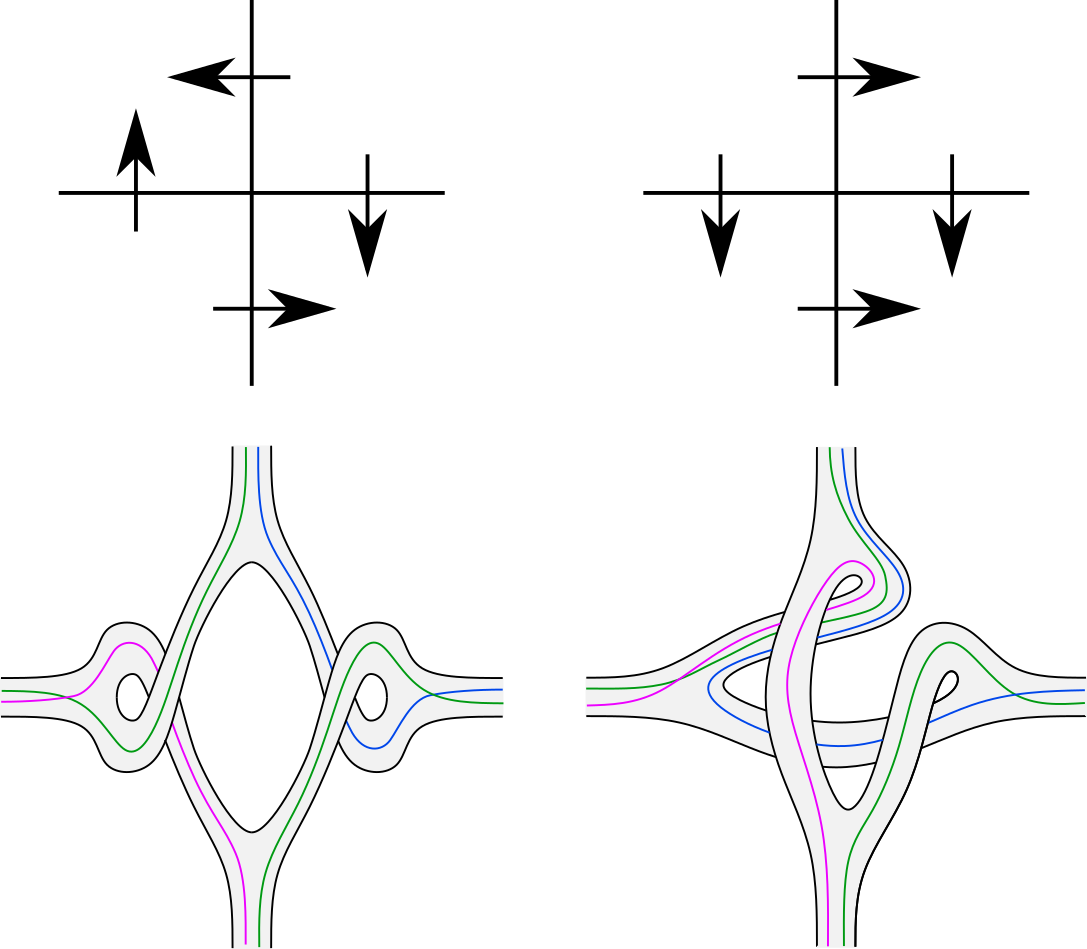}}
	
	\put(7.7, 55){$1$}
	\put(22.5, 55){$2$}
	\put(7.7, 40){$3$}
	\put(22.5, 40){$4$}
	
	% \graphpaper(0,0)(70,60)
	\end{picture}
	\end{center}
	\caption{Action of a slide on the curves~$\gamma_f$. The slide is done in the quadrant~$1$. }
	\label{fig:alternativegammavf}
\end{figure}

Consider the coorientation~$\eta$ (left) and~$\nu$ (right) presented in Figure \ref{fig:alternativegammavf}. On the figure, we represent~$\gamma_f^\eta$ on the left and~$sl(\gamma_f^\eta)$ on the right, for the four faces~$f$ adjacent to~$v$. We have~$sl(\gamma_{f_i}^\eta)=\gamma_{f_i}^\nu$ for~$i\in\{1,4\}$, and~$sl(\gamma_{f_j}^\eta)=\tau_{\gamma_v}(\gamma_{f_j}^\nu)$ for~$j\in\{2,3\}$, where~$\tau_{\gamma_v}$ is the negative Dehn twist along~$\gamma_v$. Let~$\preceq$ be a representation of~$\nu$, so that up to changing~$2$ and~$3$, we have~$f_4\preceq f_3\preceq f_2\preceq v\preceq f_1$. The first return map of~$\Sigma_\nu$ given by Theorem \ref{introtheoremCrestated} contains a sub-product of the form~$T\gamma_{f_1}^\nu\circ\cdots\circ T\gamma_v^\nu\circ \cdots\circ T\gamma_{f_2}^\nu\circ \cdots\circ T\gamma_{f_3}^\nu\circ \cdots\circ T\gamma_{f_4}^\nu$. But~$T\gamma_v^\nu$ commutes with any Dehn twist that is not a~$T\gamma_{f_i}$ for~$1\leq i\leq 4$. According to remark \ref{commutation}, for~$i=2,3$ we have~$\gamma_v^\nu\circ T\gamma_{f_i}^\nu=T(T\gamma_v(\gamma_{f_i}^\nu)\circ T\gamma_v^\nu=Tsl(\gamma_{f_i}^\eta)\circ \Tau\gamma_v^\nu$.

So together with Remark \ref{commutation}, we can change the position of~$T\gamma_v^nu$ so that the Dehn twists appear in the order~$f_1,f_2,f_3,v,f_4$, and are along the curve~$sl(\gamma_{f_i}^\eta)$ and~$sl(\gamma_v^\eta)$. We can do this procedure for all vertices~$v\in\Gamma_0$, which prove that there exists a diffeomorphism~$sl:\Sigma_\Gamma\to\Sigma_\nu$ so that the first return map on~$\Sigma_\nu$ is a product of negative Dehn twist along the curve~$sl(\gamma_f^\eta)$ and~$sl(\gamma_v^\eta)$, whose ordering depends on~$\nu$.

\end{proof}

\bibliographystyle{alpha}
\bibliography{bibliography}

\begin{appendices}

\section{Construction of an explicit coorientation with fixed cohomology \label{appendix: construction}}

Fix a collection~$\Gamma$ of geodesic curve in~$S$. An Eulerian coorientation~$\eta$ of~$\Gamma$ induces an element~$[\eta]$ in~$H^1(S,\Z)$, that counts the algebraic intersection of a curve~$\gamma$ with~$(\Gamma, \eta)$. Note that the parity of~$[\eta]$ is fixed, since~$[\eta]\equiv[\Gamma] \mod 2$. We fix~$\omega\in H^1(S,\Z)$ with the good parity, and try to construct, if possible, an Eulerian coorientation with this cohomology. The ideas are already present in \cite{CossariniDehornoy}. We express them in an algorithmic manner, using the principle of Dijkstra's algorithm.

We denote by~$(\hat S, \hat\Gamma)$ the universal covering of~$(S, \Gamma)$. In order to define Eulerian coorientations, we use height functions. A height function is a function~$h:\{\text{faces of~$(\hat{S},\Gamma)$}\}\to\Z$ so that for any two adjacent faces~$f_1, f_2$,~$|h(f_1)-h(f_2)|=1$. We say that a height function~$h$ is~$\omega$ stable if for any loop~$\gamma$ of~$S$ and any lift~$\hat\gamma$ in~$\hat S$, we have~$h(\hat\gamma(1)) - h(\hat\gamma(0)) = \omega(\gamma)$.

\begin{lemma}
  There is a~$1:1$ correspondence between Eulerian coorientations of~$\Gamma$ with cohomology~$\omega$, and height functions on the universal covering~$(\hat S, \hat\Gamma)$ of~$(S, \Gamma)$, that are~$\omega$-stable.
\end{lemma}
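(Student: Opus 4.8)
The plan is to establish the bijection by making each direction explicit and then checking they are mutually inverse. First I would describe the map from height functions to coorientations. Given an $\omega$-stable height function $h$ on the faces of $(\hat S,\hat\Gamma)$, each edge $\hat e$ of $\hat\Gamma$ separates two adjacent faces $f_1,f_2$ with $|h(f_1)-h(f_2)|=1$; coorient $\hat e$ so that the transverse arrow points from the face of larger height to the face of smaller height (equivalently, points in the direction of decreasing $h$). I would check that this coorientation is Eulerian: around a vertex $\hat v$ of $\hat\Gamma$ the four incident faces $f_1,f_2,f_3,f_4$ (in cyclic order) have heights that differ by $\pm 1$ between consecutive ones, so the cyclic sequence of signs $h(f_{i+1})-h(f_i)$ must contain equally many $+1$'s and $-1$'s (a closed lattice path of $\pm 1$ steps of even length $4$ has two of each), which is exactly the Eulerian condition on the four edges at $\hat v$. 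Next I would check $\omega$-invariance under deck transformations: since the comparison $h(f_1)-h(f_2)$ only depends on the difference of heights, and $h$ changes by an additive constant along any deck transformation $g$ (because $h\circ g - h$ is locally constant on the connected $\hat S$ by the defining inequality, hence constant), the induced coorientation on $\hat\Gamma$ is $\pi_1(S)$-invariant, so it descends to an Eulerian coorientation $\eta$ of $\Gamma$. Finally the $\omega$-stability of $h$ says precisely that $h$ increases by $\omega(\gamma)$ along a lift of a loop $\gamma$; since crossing an edge from the positive to the negative side of the coorientation corresponds to a height drop of $1$, the signed count of crossings of $\gamma$ with $(\Gamma,\eta)$ equals $-(h(\hat\gamma(1))-h(\hat\gamma(0)))$ up to a global sign convention, which identifies $[\eta]$ with $\pm\omega$; I would fix the sign convention in the definition so this reads $[\eta]=\omega$.

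For the reverse map, start from an Eulerian coorientation $\eta$ of $\Gamma$ with $[\eta]=\omega$, lift it to $\hat\Gamma$, and define $h$ on the faces of $(\hat S,\hat\Gamma)$ by choosing a basepoint face $f_0$, setting $h(f_0)=0$, and for any other face $f$ taking a path $\alpha$ in $\hat S$ from the interior of $f_0$ to the interior of $f$ transverse to $\hat\Gamma$, and setting $h(f) = -\,(\text{algebraic intersection number of }\alpha\text{ with }(\hat\Gamma,\eta))$. The content is that this is well defined, i.e.\ independent of $\alpha$: two such paths differ by a loop in $\hat S$, and since $\hat S$ is simply connected every loop is null-homotopic, so its algebraic intersection with $\hat\Gamma$ is zero — here I would use that the relevant homology class vanishes, equivalently that the coorientation on $\hat\Gamma$ is (trivially) closed because it is pulled back from $S$ where $\eta$ is Eulerian hence defines a cohomology class, whose pullback to the simply connected $\hat S$ is exact. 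Adjacent faces are connected by a short transverse arc crossing one edge, contributing $\pm 1$, so $|h(f_1)-h(f_2)|=1$ and $h$ is a height function. To check $\omega$-stability: for a loop $\gamma$ in $S$ with lift $\hat\gamma$, $h(\hat\gamma(1))-h(\hat\gamma(0))$ is minus the algebraic intersection of $\hat\gamma$ with $(\hat\Gamma,\eta)$, which equals minus the algebraic intersection of $\gamma$ with $(\Gamma,\eta)$, i.e.\ $-[\eta](\gamma)=-\omega(\gamma)$; again I adjust the sign convention in the correspondence so this is $+\omega(\gamma)$.

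It then remains to verify the two constructions are inverse to each other, which is essentially immediate from the definitions: starting from $h$, the coorientation points in the direction of decreasing height, so recomputing heights by counting signed crossings of a transverse path recovers $h$ up to the choice of basepoint value, and normalizing $h(f_0)=0$ (or, more canonically, working with $h$ only up to additive constant, as is natural since the defining conditions only constrain differences) makes it exact; conversely starting from $\eta$, the height function built from signed intersection numbers has, across each edge, a drop of $1$ exactly on the positive side, so its associated coorientation is $\eta$ again. The main subtlety to handle carefully is the well-definedness in the reverse direction — i.e.\ path-independence of the signed-intersection definition of $h$ on $\hat S$ — together with keeping the sign conventions consistent across both directions so that the cohomology classes match on the nose rather than up to sign; the Eulerian condition enters precisely to guarantee that $\eta$ defines a class in $H^1(S,\Z)$ (Lemma of \cite{CossariniDehornoy} quoted in the excerpt), whose pullback to $\hat S$ is then exact because $\hat S$ is simply connected, and a primitive of that pullback, unique up to a constant, is exactly a height function. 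The Eulerian condition is also what makes the heights of the four faces around any vertex fit together into a genuine closed $\pm1$-step lattice loop, which is the local compatibility needed in the forward direction.
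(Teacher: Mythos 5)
Your proof is correct and follows exactly the paper's (one-line) argument, which simply declares that an Eulerian coorientation is the gradient of a height function and calls the verification straightforward; you have supplied that verification, including the point that the Eulerian condition is what makes the signed-intersection count homotopy-invariant and hence makes $h$ well defined on the simply connected cover. One small repair: your parenthetical claim that $h\circ g-h$ is locally constant ``by the defining inequality'' is not right (two height functions can differ by a non-constant function, e.g.\ $i+j$ and $-i+j$ on a square grid); the constancy of $h\circ g-h$ should instead be read off directly from $\omega$-stability, which gives $h(g(f))-h(f)=\omega(\gamma)$ for every face $f$ when $g$ is the deck transformation of $[\gamma]$.
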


\begin{proof}
We see an Eulerian coorientation as the gradient of height function, and the proof is straight-forward.
\end{proof}

We will construct a height function recursively, in the same way Dijkstra algorithm constructs the distance function. We denote by~$\bar S$ a closed fundamental domain of~$\hat S\to S$, that is an embedding on the interior of the faces. Note that a~$\omega$-stable height function can be recovered if we know its value only on the fundamental domain~$\bar S$.

We fix~$f_0$ an arbitrary face of~$S$, and we construct the maximal height function on~$\bar S$ with a fixed value on~$f_0$ and fixed homology class. The idea is summarized here and detail in pseudo-code later. We set~$h(f_0) = 0$, and define recursively a height function by exploring~$\bar S$ face by face. At each step, we take one face~$f$ which already has a height and so that we need to update its neighbourhood. Then compute a new potential height for each neighbor face~$f'$ of~$f$, using the homology of~$\omega$ if needed. If the new potential height of~$f'$ is lower than the old one, we update its height, and store~$f'$ in the list of faces that we later need to update their neighbourhood.

The pseudo-code in Algorithm \ref{algorithm:height} assumes we have~$\omega \in H^1(S,\Z)$ and a procedure to construct curves in~$S$. The pseudo code computes recursively three functions:~$height$ is the potential height function of~$\bar{S}$,~$state$ keeps track of which faces need to update, and~$path(f)$ contains a path ending at the face~$f$ that helps detecting if~$\omega$ is the cohomology of a coorientation. If we know that~$\omega$ is the cohomology of a coorientation, we can skip lines using~$path$, that is lines~$5,8$ and~$20-26$.

\begin{algorithm}\label{algorithm:height}
\begin{algorithmic}[1]
\Procedure{construct height}{}
\For {face~$f\neq f_0$}
	\State~$height(f) \gets +\infty$,~$state(f) \gets waiting$,~$path(f) \gets \emptyset$
\EndFor	
\State~$height(f_0)=0$,~$state(f_0) \gets to_evaluate$,~$path(f_0) \gets (f_0)$
\While {there is a face~$f$ with~$state(f)=to_evaluate$} 
	\State~$state(f)\gets waiting$
	\For {$e$ edge of~$f$}
		\State~$f' \gets$ the other adjacent face to~$e$
		\State~$h \gets height(f) + 1$
		\If {$e\in\partial\bar S$}
			\State~$\gamma \gets~$ a closed curve in~$S$ so that~$|\gamma\cap\partial\bar S|=1$ and oriented as~$f\xrightarrow{e} f'$
			\State~$h \gets h + \omega(\gamma)$
		\EndIf
		\If {$h<height(f')$}
			\State~$height(f') \gets h$
			\State~$state(f') \gets to_evaluate$
			\If {$f'\notin path(f)$}
				\State~$path(f') \gets path(f)\cup(e, f')$
			\Else{}
				\State decompose~$path(f)$ as~$(before,f',after)$
				\State~$\gamma \gets$ a closed curve in~$S$ matching~$(f', after, e, f')$
				\If {$|\gamma\cap\Gamma|<\omega(\gamma)$} 
					\Return There exist no valid height function
				\EndIf
				\State~$path(f') \gets (after)\cup(e, f')$
			\EndIf
		\EndIf 
	\EndFor	
\EndWhile
\State \:Return~$height$
\EndProcedure
\end{algorithmic}
\caption{Construction of a~$\omega$-stable height function.}
\end{algorithm}

\begin{proposition}
Let~$\omega$ in~$H^1(S,\Z)$. Algorithm \ref{algorithm:height} applied on~$\omega$ terminates and detects if~$\omega$ is the cohomology class of some Eulerian coorientation. In this case, it returns an associated height function.
\end{proposition}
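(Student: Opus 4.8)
The proof is a correctness-and-termination analysis of Algorithm~\ref{algorithm:height}, which I would recast as a queue-based Bellman--Ford computation. Build the auxiliary weighted directed graph $D$ whose vertices are the faces of $\bar S$ (equivalently, via the fundamental domain, the faces of $(\hat S,\hat\Gamma)$), and which has, for each pair of faces $f,f'$ that become adjacent in $S$ across an edge $e$, an arc $f\to f'$ of weight $w(f\to f')=1$ if $e$ lies in the interior of $\bar S$ and $w(f\to f')=1+\omega(\gamma_e)$ if $e\subset\partial\bar S$, where $\gamma_e$ is the loop crossing $\partial\bar S$ once along $e$ chosen in the boundary-edge case of the algorithm. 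One checks that $w(f\to f')+w(f'\to f)=2$, and that the weight of a directed cycle of $D$ representing a loop $\gamma\subset S$ equals $|\gamma\cap\Gamma|\pm\omega(\gamma)$; in particular the algorithm's failure test $|\gamma\cap\Gamma|<\omega(\gamma)$ expresses exactly that a certain directed cycle of $D$ has negative weight. Under the identification $h=height$ with $h(f_0)=0$ of the correspondence lemma above, an $\omega$-stable height function on $(\hat S,\hat\Gamma)$ is the same thing as a feasible potential for $D$, i.e.\ a vertex labelling with $height(f')\le height(f)+w(f\to f')$ on every arc; conversely any feasible potential gives such a height function after a short parity check, which uses the standing hypothesis $\omega\equiv[\Gamma]\bmod 2$ of this appendix to promote the two inequalities $|height(f)-height(f')|\le 1$ coming from the arcs between $f$ and $f'$ to the exact value $1$ required of a height function.

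From this dictionary I would read off the key equivalence: $\omega$ is the cohomology class of an Eulerian coorientation $\iff$ $D$ admits a feasible potential $\iff$ $D$ has no negative-weight directed cycle. The first two are equivalent by the correspondence lemma, and the equivalence with absence of negative cycles is the standard fact that a feasible potential forbids negative cycles and that, conversely, if $D$ has no negative cycle then the single-source distances from $f_0$ are finite and form a feasible potential.

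It then remains to analyse the algorithm as a Bellman--Ford run. The value $height(f)$ is non-increasing in time, is changed only by strict integer decreases, and each such change re-enqueues $f$; so the process either halts or runs forever. If $D$ has no negative cycle, the shortest-path distances are finite and bound $height$ from below throughout, so $height$ takes finitely many values, the queue of pending faces empties after finitely many steps, and at that moment every arc is relaxed --- hence $height$ equals the shortest-path distances, is a feasible potential, and the algorithm returns it (the failure branch, which would exhibit a negative cycle, is never reached). If $D$ has a negative cycle, I would argue as in Bellman--Ford negative-cycle detection that the $path(\cdot)$ bookkeeping must eventually record a closed walk meeting a face it already contains with total weight $<0$, triggering the failure return; the invariant to maintain is that each time $path(f)$ is (re)set it is a walk in $D$ ending at $f$ whose weight is $\le height(f)$ at that moment, so that a forced repetition of a face encloses a sub-walk of strictly negative weight. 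Combining the equivalence of the previous paragraph with this case analysis, and invoking the correspondence lemma once more, yields the proposition: the algorithm always terminates, returns a height function (hence an Eulerian coorientation with cohomology $\omega$) when $\omega$ is realizable, and returns the failure message otherwise.

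The main obstacle is this last detection step. The cycle-detection used here is lazier than maintaining a full shortest-path tree: only one witness walk per face is stored, and it is trimmed on revisits. One must therefore check that the weight-versus-$height$ invariant above survives the trimming and that a repeated face is forced before $height$ could diverge to $-\infty$ --- equivalently, a pigeonhole bound on the length of a strictly weight-decreasing walk with no repeated vertex. Making these invariants precise and verifying their preservation through the path-trimming branch is, I expect, the only genuinely delicate part; everything else is the standard Bellman--Ford analysis transported through the face/height dictionary.
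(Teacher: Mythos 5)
Your proof is correct in outline and follows the same basic strategy as the paper's (which itself announces ``the principle of Dijkstra's algorithm''): both analyses rest on the facts that $height(f)$ is non-increasing, bounded below in the realizable case by a shortest-path quantity, and equal at termination to a function satisfying the relaxed inequality on every arc. The one genuine difference is where the realizability criterion comes from. The paper imports it wholesale from Lemma~\ref{lemma:coorientationExistence} of \cite{CossariniDehornoy} ($\omega$ is realizable iff $|\gamma\cap\Gamma|\geq\omega(\gamma)$ for every transversal loop), whereas you rederive it as the absence of negative cycles in your weighted digraph $D$, via the dictionary between $\omega$-stable height functions and feasible potentials. This makes the equivalence self-contained and makes transparent why the test on line~25 is exactly a negative-cycle certificate; the paper's route is shorter but leans on an external result. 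Two caveats on your version. First, ``any feasible potential gives a height function after a parity check'' is too strong: a constant potential can be feasible (when all arc weights are non-negative) without being a height function; what you need, and what the algorithm actually computes, is the shortest-path potential, whose values are weights of genuine walks and therefore carry the forced parity. Second, the step you flag as delicate --- that the trimmed $path(\cdot)$ bookkeeping is guaranteed to exhibit a negative closed walk, and to do so before the heights could decrease forever --- is precisely the step the paper also leaves at the level of a remark (``the curves constructed in $path$ satisfy some minimality property''), so your sketch is no less complete than the published one. Be aware, though, that your proposed invariant (the weight of $path(f)$ is at most $height(f)$) is not obviously preserved by the trimming branch, since the discarded prefix of the walk may itself have negative weight; making that invariant (or a corrected one) survive the trimming is the real content still missing from both arguments.
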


\begin{proof}
We just give the idea of the proof.

\begin{lemma} \label{lemma:coorientationExistence} \cite{CossariniDehornoy}
There exists~$(\gamma_i)_{1\leq i\leq n}$ a family of closed curves in~$S$ transversal to~$\Gamma$ such that the following are equivalent: 
\begin{itemize}
\item there exists an Eulerian coorientation with cohomology class~$\omega$, 
\item all closed curves~$\gamma$ in~$S$ transversal to~$\Gamma$ satisfy~$|\gamma\cap\Gamma|\geq \omega(\gamma)$.
\item for all~$1\leq i\leq n$, we have~$|\gamma_i\cap\Gamma|\geq \omega(\gamma)$.
\end{itemize}
\end{lemma}

Suppose that an eulerian coorientation exists in the cohomology class~$\omega$. For a fixed face~$f$, the sequence of~$(height_n(f))_n$, for the step~$n$, is decreasing, with value in~$\Z\cup\{+\infty\}$ and minored by~$height(f) \geq height(f_0) - |\delta_f|$, where~$\delta_f$ is the shortest path in~$\Gamma^\star$ from~$f_0$ to~$f$. Thus the algorithm terminates. Also~$height$ is a height function corresponding to~$\omega$, which can be seen by expressing what it means for the algorithm to terminates.

The line~$25$ of the pseudo code tests this lemma on some well-chosen closed curves. The curves constructed in~$path$ satisfy some minimality property, which is required to test Lemma~\ref{lemma:coorientationExistence} efficiently. Notice that~$path$ does not necessary constructs the curves~$\gamma_i$ in Lemma~\ref{lemma:coorientationExistence}.
\end{proof}

\section{Equivalence of cohomologous coorientations.\label{appendix: comparison}}

We discuss a way to transform an acyclic Eulerian coorientation into any of its cohomologous coorientations by elementary flips~$I_f$. The combinatorial of Eulerian coorientations together with the flip transformation have already been studied by J.Propp, with the dual point of view. The following proposition is a mainly a geometric reformulation of Theorem~$1$ from \cite{propp2002}.

\begin{proposition}\label{proposition:coorientationEquivalence}
Let~$\eta,\nu$ be in~$\coorientation{\Gamma}$ be two cohomologous acyclic coorientations, (so that~$\Sigma_\eta$ is a Birkhoff section). Then there exists a sequence of elementary flips that change~$\eta$ into~$\nu$.

Let~$\eta,\nu$ be in~$\coorientation{\Gamma}$ two cohomologous coorientations that are not acyclic (so that~$\Sigma_\eta$ is not a Birkhoff section). Suppose that the union of oriented cycles in~$(\Gamma^\star,\eta)$ is connected. Then there exists a sequence of elementary flips that change~$\eta$ into~$\nu$.

Let~$\eta$ be in~$\coorientation{\Gamma}$ not acyclic. Suppose that the union of oriented cycles in~$(\Gamma^\star,\eta)$ is not connected. Then there exists~$\nu\in\coorientation{\Gamma}$ cohomologous to~$\eta$, so that~$\Sigma_\eta$ and~$\Sigma_\nu$ are not isotopic through the flow. In particular no sequence of flips can change~$\eta$ into~$\nu$.

\end{proposition}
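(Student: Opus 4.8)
The plan is to prove Proposition \ref{proposition:coorientationEquivalence} in its three parts, treating the acyclic case, the connected-non-acyclic case, and the disconnected-non-acyclic case separately, with the first two reduced to combinatorial statements about height functions (as in Appendix \ref{appendix: construction}) and the third requiring a genuine geometric obstruction argument.

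For the first two statements, I would pass to the universal cover $(\hat S,\hat\Gamma)$ and use the correspondence of Appendix \ref{appendix: construction} between Eulerian coorientations with cohomology $\omega$ and $\omega$-stable height functions $h$ on the faces of $(\hat S,\hat\Gamma)$. Under this dictionary, an elementary flip $I_f$ along a sink face corresponds to raising the value $h(f)$ by $2$ at a local minimum (equivariantly over the $\Z$-orbit of $f$), and acyclicity of $\eta$ is equivalent to $h$ being bounded below on each fundamental domain — equivalently, $(\Gamma^\star,\eta)$ having no oriented cycle. Given two $\omega$-stable height functions $h,h'$ representing $\eta,\nu$, the difference $h-h'$ is a bounded equivariant integer function, and I would run the standard ``monotone coupling'' argument (this is exactly Propp's result, Theorem 1 of \cite{propp2002}, in its dual formulation): repeatedly pick a face where $h<h'$ that is a local minimum of $h$ among the faces still below $h'$, and flip it up; this strictly decreases $\sum (h'-h)$ over a fundamental domain while keeping all intermediate height functions $\omega$-stable, and — crucially — keeping them acyclic, since they stay sandwiched between two bounded-below functions. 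After finitely many flips one reaches $h'$, and reversing, one gets back; so $\eta$ and $\nu$ are connected by flips. For the second statement, when the union of oriented cycles is connected, the height functions are no longer bounded below, but the obstruction to the coupling argument is precisely the existence of a ``descending staircase'' along an oriented cycle; connectedness of the cycle set means all such staircases are coherently comparable, and one can still flip toward $h'$ face by face — here I would need to check that choosing flips outside the cycle region first, then handling the cycle region by a periodic shift, terminates; this is the subtler bookkeeping.

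For the third statement I would argue by producing the obstruction directly. If the oriented cycles of $(\Gamma^\star,\eta)$ split into at least two connected components $C_1,C_2$, then by Lemma \ref{acyclic} there are (at least) two disjoint closed geodesics $\gamma_1,\gamma_2\subset S$, with $\larrow{\gamma_i}$ (or a stable-leaf orbit nearby) disjoint from $\Sigma_\eta$. I would define $\nu$ by flipping $\eta$ only in the component $C_1$ — concretely, reverse the coorientation on a sub-multicurve realizing that component while leaving $C_2$ untouched — so that $\nu$ is still Eulerian, still cohomologous to $\eta$ (the reversal is along a null-homologous piece, by the homology argument already used for $\eta(\gamma)$ depending only on $[\gamma]$), $\larrow{\gamma_1}$ now meets $\Sigma_\nu$ but $\larrow{\gamma_2}$ still does not, whereas for $\eta$ neither does. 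The key point is a \emph{flow-isotopy invariant} distinguishing $\Sigma_\eta$ from $\Sigma_\nu$: for a transverse surface $\Sigma$ with $[\Sigma]$ fixed, the set of periodic orbits of $\phi$ disjoint from $\Sigma$ is invariant under isotopy through the flow (an orbit either hits $\Sigma$ or it does not, and a flow-isotopy carries one transverse surface to another without changing which orbits it meets in bounded time — and for non-Birkhoff transverse surfaces the ``missed'' orbits are exactly the closed orbits whose dual cycle is oriented, up to the first bullet of Lemma \ref{acyclic}). Since $\larrow{\gamma_1}$ is missed by $\Sigma_\eta$ but not by $\Sigma_\nu$, they are not flow-isotopic, hence no sequence of flips connects them (each flip being, by Section \ref{subsection:partialReturnMap}, realized by a flow isotopy when the flipped face is a sink — and in the non-acyclic case one must be slightly careful, flipping a face that is locally a sink even though the global coorientation has cycles, which is still realizable).

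I expect the main obstacle to be the bookkeeping in the second statement: when the height functions are unbounded below along a connected union of oriented cycles, the naive ``flip the lowest face below the target'' process need not terminate, and one has to organize the flips so that the non-cycle faces are brought into agreement first and the cycle region is then corrected by a single coherent periodic translation — proving that such a scheme exists and terminates is essentially reproving the relevant case of Propp's theorem in this geometric language, and I would either cite \cite{propp2002} for the core combinatorial fact and supply the translation to height functions, or redo the monotone-coupling induction with a carefully chosen potential function (e.g. $\sum_{f}(h'(f)-h(f))$ over a fundamental domain, which is finite and non-negative once one normalizes $h,h'$ to agree on a fixed cycle). The geometric invariant in the third statement is conceptually clean but I would need to state precisely the ``missed periodic orbits are a flow-isotopy invariant'' lemma and check it handles transverse (non-Birkhoff) surfaces, citing \cite{CossariniDehornoy} for the identification of $\Sigma_\eta$ up to flow-isotopy with $[\eta]$ where available.
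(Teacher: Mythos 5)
Your first part is essentially Propp's monotone coupling and is fine in spirit (the paper also reduces to \cite{propp2002}, though it organizes the flips region-by-region via the difference set $E=\{e:\eta(e)\neq\nu(e)\}$ and the partial order that acyclicity puts on the faces inside each component of $S\setminus E$, which avoids the issue that ``the face minimizing $h$ on $\{h<h'\}$'' need not exist in the universal cover when $\omega\neq 0$). Your second part is left as an acknowledged to-do, and the proposed mechanism is off: faces lying on an oriented cycle can never become sinks, so the cycle region $U$ is never flipped and there is no ``periodic shift'' to perform there. The actual argument is to first exhaust all possible sink flips so that both coorientations point inward along $\partial U$ (this terminates precisely because, $U$ being connected, there is no oriented path outside $U$ from $U$ back to $U$), after which the remaining disagreement is supported in $S\setminus U$ and the acyclic argument applies component by component.

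The genuine gap is in the third statement, on both counts. First, the construction of $\nu$: reversing the coorientation ``on a sub-multicurve realizing $C_1$'' is in general not defined (the oriented cycles of $(\Gamma^\star,\eta)$ need not be realized by components of $\Gamma$), and even when it is, the curves realizing $C_1$ have no reason to be null-homologous, so the reversal changes $[\eta]$ by twice their Poincar\'e dual and $\nu$ is not cohomologous to $\eta$ --- the parenthetical appeal to ``$\eta(\gamma)$ depends only on $[\gamma]$'' does not give this. The paper instead reverses $\eta'$ along $\partial U_k$, the boundary of the union of faces of a \emph{maximal} component $U_k$ of the cycle region, after preliminary sink flips make $\eta'$ point inward along $\partial U_k$; this curve bounds, hence is null-homologous, which is what makes $\nu$ cohomologous. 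Second, and decisively, your invariant cannot work for any legitimate $\nu$: if $c$ is an oriented cycle for $\eta$ then $\eta(c)=\pm|c\cap\Gamma|$, and this is a purely cohomological condition, so two cohomologous Eulerian coorientations orient exactly the same cycles of $\Gamma^\star$ and hence (by the first bullet of Lemma \ref{acyclic}) miss exactly the same periodic orbits. The set of missed periodic orbits is constant on the cohomology class and can never distinguish $\Sigma_\eta$ from $\Sigma_\nu$. The paper's finer invariant is the finite intersection number with a \emph{non-periodic} orbit: a geodesic $\delta$ spiraling backward onto a cycle of $U_i$ and forward onto a cycle of $U_k$ meets both $\Sigma_{\eta'}$ and $\Sigma_\nu$ finitely often, and the two counts differ by the algebraic intersection of $\delta$ with $\partial U_k$, which is odd; this is where the disconnectedness of the cycle region (existence of $U_i\neq U_k$) is actually used.
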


Notice that we are never allow to flip a face included in an oriented cycle, and oriented cycles remain oriented the same way after any sequence of flips.

\begin{proof}
We start for~$\eta$ acyclic. Define~$E=\{e\in\Gamma_1,\eta(e)\neq\nu(e)\}$ and notice that~$E$ is an embedded graph in~$S$ with degree~$2$ and~$4$ vertices. Also~$\eta$ and~$\nu$ induce on~$E$ two opposite Eulerian coorientations. 

The dual graph~$E^\star$ is an acyclic oriented graph. Indeed take~$c$ a cycle in~$E^\star$. The two coorientations are cohomologous, so~$[\eta](c)=[\nu](c)$. Hence~$2[E](c)=([\eta]-[\nu])(c)=0$ and~$c$ cannot be an oriented cycle in~$E^\star$. Also every edge of~$E$ bounded two different connected components in~$S\setminus E$, or elsewhere we would have a closed curve~$c$ intersecting~$E$ only once, and with~$\eta(c)\neq\nu(c)$.

Hence~$\eta$ restricted to~$E$ induces a partial ordering on~$E^\star$. We will use this partial order to "solve" the problem by beginning by the local minimal elements. Take~$F^\star\in E^\star$ a local minimal for~$\eta$. On the boundary~$\partial F$,~$\eta$ is going inward. Since~$\eta$ is acyclic, the faces~$f\in\Gamma^\star$ with~$f\subset F$ are ordered by~$\eta$, and we can apply the flip algorithm to sub-faces of~$F$, flipping once every sub-faces of~$F$. After this procedure, we obtain an acyclic Eulerian coorientation~$\eta'$ cohomologous to~$\eta$, that differs only on~$\partial F$. So the difference between~$\eta'$ and~$\nu$ bounds one less connected component. By applying this procedure at most a finite number of time, we describe a finite number of elementary flips that transform~$\eta$ into~$\nu$.

Suppose now that~$\eta$ is not acyclic. Denote by~$U$ the union of oriented cycles in~$(\Gamma^\star,\eta)$, and suppose that~$U$ is connected, as a subgraph of~$\Gamma^\star$. Since~$\eta$ and~$\nu$ are cohomologous, an oriented cycle for~$\eta$ is also oriented for~$\nu$. So~$U$ is also the union of oriented cycles in~$(\Gamma^\star,\nu)$. We also denote by~$U\subset S$ the union of faces it induces. In~$(\Gamma^\star,\eta)$, there is no oriented path outside~$U$, starting and ending in~$U$, elsewhere this path would be a subset of an oriented cycle (since~$U$ is connected) and thus in~$U$. So every oriented cycle starting at~$U$ must be finite and end outside~$U$. 

Thus we can successively do every possible flip on the sink faces of~$\eta$ and~$\nu$, to obtain~$\eta'$ and~$\nu'$ which are going inside~$U$ along~$\partial U$. Then we can compare~$\eta'$ and~$\nu'$ on the connected components of~$S\setminus U$. We will adapt the previous procedure to find a sequence of flip from~$\eta$ to~$\nu$. Define~$E$ in the same way, then~$E$ delimitates connected regions. For every such region~$F$, either~$U$ is outside~$F$ and we apply the flip algorithm on the sub-faces of~$F$, or~$U$ is inside~$F$, and we apply the flip algorithm on~$S\setminus F$. Hence we can apply a sequence of elementary flips to eliminate~$\partial F$ from~$E$, and successively transform~$\eta'$ into~$\nu'$.

Finally suppose that~$\eta$ is not acyclic and that~$U$ is not connected. We will construct~$\nu$ cohomologous to~$\eta$, and a non-closed geodesic that intersects finitely~$\Sigma_\eta$ and~$\Sigma_\nu$ but not with the same amount. Denote~$U_1,\cdots, U_n$ the connected components of~$U$. Notice that~$\eta$ partially order~$(U_i)_i$. Indeed suppose there is a finite sequence of oriented paths connecting~$U_{i_1}$ to~$U_{i_1},\cdots U_{i_k}$ and back to~$U_{i_1}$, then it is included in an oriented cycle intersecting~$U_k$, that must remain included in~$U_k$. 

We successively do every possible flip on sink faces (which terminates in a finit number of steps), to obtain~$\eta'$. Let~$F$ be a connected component of~$\Gamma^\star\setminus U$. Then every increasing path in~$(F^\star,\eta')$ is finite, and end at the boundary of~$F$. Since the~$U_i$ are partially ordered, there is a maximal~$U_k$. And since there is no path inside~$F$ starting and ending at~$U_k$,~$\eta'$ is going inward~$U_k$ along its boundary. Let~$\nu$ be the coorientation obtained from~$\eta'$ by changing the co-orientation of~$U_k$. Then~$\nu$ is Eulerian and cohomologous to~$\eta'$ and~$\eta$. 

\begin{figure}[h]
	\definecolor{purple}{RGB}{255,0,255}
	\centering
	\begin{center}
	\begin{picture}(123,40)(0,0)
	% \graphpaper(0,0)(125,50)
	\put(0,0){\includegraphics[height=40mm]{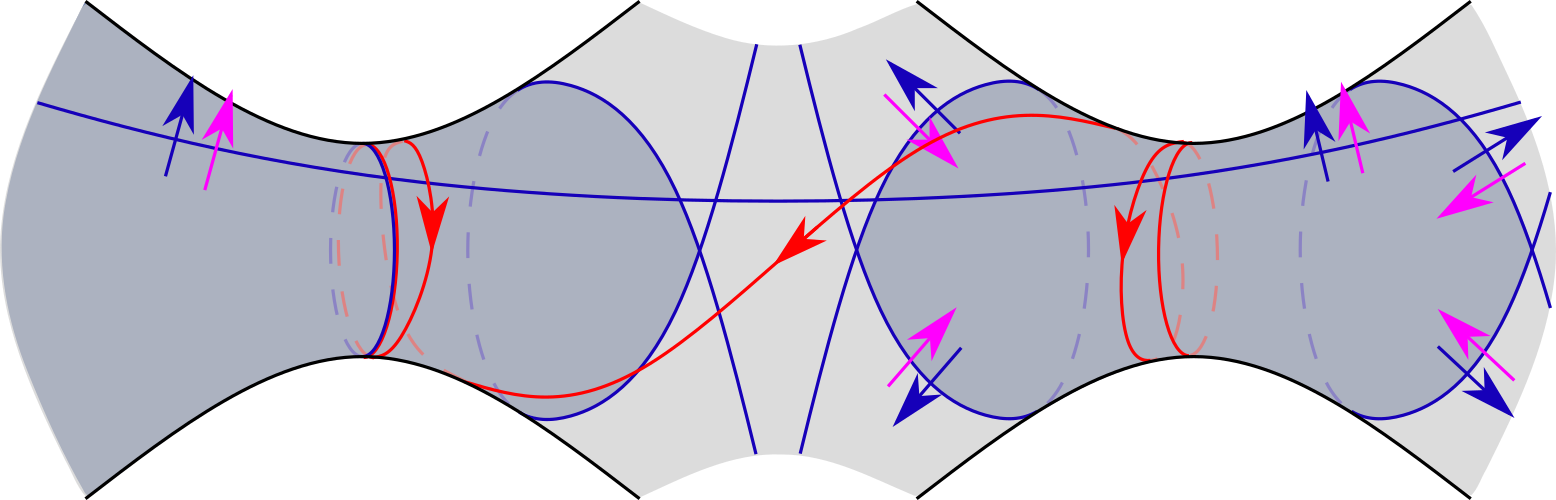}}
	
	\put(61,38){$S$}
	\color{blue}
	\put(61,30){$\Gamma$}
	\put(72,4){$\eta$}
	\color{blue!40!gray}
	\put(29,7){$U_i$}
	\put(95,7){$U_k$}
	\color{red}
	\put(63,16){$\delta$}
	\color{purple}
	\put(72,14){$\nu$}

	\end{picture}
	\end{center}
	\caption{Non closed geodesic intersecting~$\Sigma_\eta$ and~$\Sigma_\nu$ a different amount, but with~$[\eta]=[\nu]$.}
	\label{fig:NonIsotopy}
	\end{figure}

By Lemma \ref{acyclic}, for every~$1\leq i\leq n$, there exists a geodesic~$\delta_i$ inside~$U_i$ (or on its boundary). Let~$1\leq i\leq n$ be different from~$k$, and define a non-closed geodesic~$\delta$ as in Figure \ref{fig:NonIsotopy}, so that~$\delta$ accumulates in the infinite past along~$\bar{\delta_i}$ ($\delta_i$ with the opposite direction), and accumulates in the infinite future along~$\bar{\delta_k}$. Since~$i\neq k$, the algebraic intersection is~$\delta\cap\partial U_k$ is odd. We can do this so that~$\delta$ remains inside the interior of~$U_k\cup U_i$ outside a compact arc. But if~$\delta_i\not\subset\partial U_i$ then~$\larrow{\delta_i}\cap\Sigma\eta=\emptyset$, and if~$\delta_i\subset\partial U_i$ then~$\larrow{\beta_i}\cap\Sigma\eta=\emptyset$ where ~$\beta_i$ is any slight pushed of~$\delta_i$ inside~$U_i$. Thus~$\delta\cap \Sigma_{\eta'}$ and~$\delta\cap \Sigma_\nu$ are finite, and differ by an odd integer, that is not~$0$. Thus~$\Sigma_\eta$ and~$\Sigma_\nu$ are not isotopic through the flow.

\end{proof}

\end{appendices}

\end{document}